\documentclass{article}

\usepackage{xcolor}

\definecolor{linkblue}{RGB}{0,90,180}
\definecolor{lightlinkblue}{RGB}{70,150,230} 

\usepackage[
    pdftex,
    colorlinks=true,
    bookmarksopen=true,
    linkcolor=lightlinkblue,
    citecolor=linkblue,      
    urlcolor=linkblue       
]{hyperref}
\usepackage{xspace}
\usepackage[
    a4paper,
    left=3cm,
    right=3cm,
    top=3cm,
    bottom=3cm
]{geometry}

\usepackage{booktabs}
\usepackage[T1]{fontenc}
\usepackage[utf8]{inputenc}
\usepackage{lmodern}
\usepackage{libertine}
\usepackage{makecell}
\usepackage{bbm}
\usepackage{enumitem}
\usepackage{mathrsfs}
\usepackage{amssymb}
\usepackage{amsmath}
\usepackage{upgreek}
\usepackage{mathtools}
\usepackage{multirow,multicol}
\usepackage{amsthm}
\usepackage{comment}
\usepackage[font=small,labelfont=bf]{caption}
\usepackage{cleveref}
\theoremstyle{plain}
\newtheorem{theorem}{Theorem}[section]
\newtheorem{definition}{Definition}
\newtheorem{proposition}[theorem]{Proposition}
\newtheorem{lemma}[theorem]{Lemma}
\newtheorem{corollary}[theorem]{Corollary}

\newtheorem{remark}[theorem]{Remark}

\usepackage{graphicx} 
\usepackage{quoting}
\usepackage{subcaption}
\usepackage[numbers,sort&compress]{natbib}

\usepackage{lineno}

\DeclareMathOperator*{\argmin}{arg\,min}

\def\1{\mathbf{1}}

\newcommand{\Lip}{\mathrm{Lip}}

\newcommand{\eps}{\varepsilon}

\renewcommand{\beta}{\upbeta}
\renewcommand{\eta}{\upeta}
 
\renewcommand{\leq}{\leqslant}
\renewcommand{\geq}{\geqslant}

\newcommand{\diam}{\operatorname{diam}}
\newcommand{\dist}
{\operatorname{dist}}

\newcommand{\supp}{\operatorname{supp}}

\newcommand{\diff}{\,\mathrm{d}}

\newcommand{\bfa}{\mathbf{a}}
\newcommand{\bfb}{\mathbf{b}}
\newcommand{\bfc}{\mathbf{c}}

\newcommand{\bfe}{\mathbf{e}}

\newcommand{\bfr}{\mathbf{r}}
\newcommand{\bfs}{\mathbf{s}}

\newcommand{\bfu}{\mathbf{u}}
\newcommand{\bfv}{\mathbf{v}}
\newcommand{\bfw}{\mathbf{w}}
\newcommand{\bfx}{\mathbf{x}}
\newcommand{\bfy}{\mathbf{y}}
\newcommand{\bfz}{\mathbf{z}}

\newcommand{\bfS}{\mathbf{S}}

\newcommand{\bfX}{\mathbf{X}}
\newcommand{\bfY}{\mathbf{Y}}

\newcommand{\Pac}{\mathcal{P}_{\textrm{ac}}}

\newcommand{\N}{\mathbb{N}}

\newcommand{\R}{\mathbb{R}}

\newcommand{\PP}{\mathcal{P}}

\newcommand{\UIP}{\hyperref[def:uip]{UIP}\xspace}
\newcommand{\SCC}{\hyperref[def:cell_ctrl]{SCC}\xspace}

\usepackage{xcolor}
\definecolor{gblue}{HTML}{418BC0}
\definecolor{gpink}{HTML}{F17EA7}
\definecolor{ggreen}{HTML}{BDE0D2}
\definecolor{gpeach}{HTML}{F3CCB7}

\usepackage{pifont}

\title{Constructive interpolation and 
generalization rates for neural ODEs: a control perspective}

\author{
Antonio Álvarez-López\textsuperscript{1,2,*} \qquad
Lorenzo Liverani\textsuperscript{1} \qquad
Enrique Zuazua\textsuperscript{1,2,3}
}

\date{\vspace{-1cm}}

\begin{document}

\maketitle

\footnotetext[1]{
Chair for Dynamics, Control, Machine Learning, and Numerics
(Alexander von Humboldt Professorship), Department of Mathematics,
Friedrich-Alexander-Universität Erlangen-Nürnberg,
91058 Erlangen, Germany.
}

\footnotetext[2]{
Departamento de Matemáticas, Universidad Autónoma de Madrid,
28049 Madrid, Spain.
}

\footnotetext[3]{
Chair of Computational Mathematics, Universidad de Deusto,
Av. de las Universidades 24, 48007 Bilbao, Basque Country, Spain.
}

\renewcommand{\thefootnote}{\fnsymbol{footnote}}
\footnotetext[1]{Corresponding author. Email: \texttt{antonio.alvarezl@uam.es}}
\renewcommand{\thefootnote}{\arabic{footnote}}

\begin{abstract}
We study supervised regression with neural ODEs (NODEs) from a control-theoretic perspective to derive explicit population-risk bounds. We focus on a widely used class of non-autonomous models with constant parameters and explicit time dependence, which we call semi-autonomous NODEs (SA-NODEs). We constructively prove that SA-NODEs are capable of \emph{exact} interpolation of admissible finite datasets, and even satisfy a stronger property that we call \emph{simultaneous cell controllability} (SCC): their flows can map prescribed disjoint cells into arbitrarily small target balls. This property is the mechanism that upgrades interpolation into quantitative generalization, by allowing SA-NODEs to emulate piecewise-constant nonparametric estimators. Consequently, our risk bounds recover the rates of histogram and nearest-neighbor estimators, provided the network width satisfies a conservative scaling with the sample size. Numerical experiments show that trained SA-NODEs achieve competitive---often lower---test errors than these baselines. Finally, we show that the explicit time dependence is essential. Although two-layer autonomous NODEs can interpolate geometrically nondegenerate datasets, structural obstructions prevent them from achieving SCC. These limitations, further confirmed numerically, support the view that SA-NODEs provide a minimal effective architecture for learning.
\end{abstract}

\setcounter{tocdepth}{2}
\tableofcontents
\vspace{0.5cm}


\section{Introduction}\label{s:intro}

\subsection{Expressivity and generalization}

Supervised learning is one of the central paradigms in machine learning, with successful applications across a wide range of problems in science and engineering  \cite{LeCun2015Deep}
. The standard framework relies on a dataset of size $N \geq 1$,  consisting of pairwise distinct inputs $\bfx_i \in \mathcal{X}$ and corresponding labels $\bfy_i \in \mathcal{Y}$,
\begin{equation}\label{eq:dataset}
\mathcal{D}_N = \{(\bfx_i, \bfy_i)\}_{i=1}^N, 
\qquad \text{with } \bfx_i \neq \bfx_j \text{ for } i \neq j.
\end{equation}
The goal is to approximate the unknown relationship between inputs and labels by fitting a predictor to the sample $\mathcal{D}_N$, typically via a parametric family $\{F_\theta\}_{\theta}$, where $\theta$ is the learnable parameter. The choice of $\mathcal{Y}$ determines the task. In this work, we focus on regression in a deterministic, noiseless setting. Accordingly, we assume throughout that $\mathcal{X}\subseteq\mathbb{R}^d$ and $\mathcal{Y}=\mathbb{R}^d$ for $d\geq 2$, and that there exists a ground-truth map $y:\mathcal{X} \to \mathcal{Y}$ such that $y(\mathbf{x}_i)=\mathbf{y}_i$ for all $i$.

Of course, fitting $\mathcal D_N$ alone does not constitute learning: the true goal is to approximate $y$ accurately across $\mathcal{X}$. To assess performance beyond the training data, we fix a test probability measure $\mu \in \PP(\mathcal{X})$, often viewed as the (unknown) data-generating distribution, and define the population (true) risk by 
\begin{equation}\label{eq.pop.risk}
\mathcal{R}(\theta) \coloneqq \int \ell\left(F_\theta(\bfx), y(\bfx)\right)\diff\mu(\bfx),
\end{equation}
where $\ell \colon \mathcal{Y} \times \mathcal{Y} \to \R_{\geq 0}$ is a chosen loss function. Since $\mu$ is unknown, the population risk $\mathcal{R}(\theta)$ cannot be computed, let alone minimized, directly from~$\mathcal{D}_N$. One therefore introduces the empirical risk
\begin{equation}\label{eq.emp.risk}
\mathcal{R}_N(\theta) \coloneqq  \frac{1}{N}\sum_{i=1}^N \ell\left(F_\theta(\bfx_i), \bfy_i\right) = \int \ell\left(F_\theta(\bfx), y(\bfx)\right)\diff\mu_N(\bfx), \qquad \mu_N \coloneqq \frac{1}{N}\sum_{i=1}^N \delta_{\bfx_i} \in \PP(\mathcal{X}).
\end{equation}
Training then amounts to selecting parameters $\theta$ that minimize the empirical risk. This approach introduces a fundamental tension, which is revealed by the following error decomposition:
\begin{equation}\label{eq.err.decomp}
\mathcal{R}(\theta) \leq \mathcal{R}_N(\theta) + \left|\mathcal{R}(\theta)-\mathcal{R}_N(\theta)\right|.
\end{equation}
The first term measures the fit on the dataset; driving it to zero (exact interpolation) requires enough model \emph{expressivity}. The second term is the \emph{generalization gap}, measuring the discrepancy between the expected risk under $\mu$ and its empirical proxy $\mu_N$. Standard approaches usually control this gap by constraining the parameter space. This strategy is not designed for the interpolation regime, where models are inherently overparameterized but may still generalize, in tension with classical capacity-based theory \cite{Vapnik1998Statistical,Zhang2021Understanding,Belkin2019Reconciling,Bartlett2020Benign}. Our purpose is to exhibit a different mechanism, based on controllability of macroscopic regions, by which interpolation and generalization can coexist.



\subsubsection*{Controlled flows and neural ODEs}

We study expressivity and generalization within a class of predictors derived from dynamical systems. More precisely, we cast supervised learning under the guise of \emph{controlled flows}, modeled by the ODE
\begin{equation}\label{eq:gen_model}
\begin{dcases}
\dot{\bfx}(t) = v_{\theta(t)} (\bfx(t)), \qquad t \in (0,T], \\
\bfx(0) = \bfx_0\in\R^d,
\end{dcases}
\end{equation}
with $T>0$ fixed, and the vector field $v_{\theta(\cdot)}(\cdot):[0,T]\times\R^d\to\R^d$ that we assume to be measurable in $t$ and uniformly Lipschitz in $\bfx$. Under these conditions, \eqref{eq:gen_model} induces a unique flow 
\[
\Phi_t^\theta : \R^d \to \R^d, \qquad t \in [0,T],
\]
and the predictor is $F_\theta = \Phi_T^\theta$. This formulation offers a useful control-theoretic interpretation of deep learning \cite{E2017Proposal,Haber2018Stable,Lu2018Beyond}. The parameters $\theta(\cdot)$ act as \emph{controls} steering the dynamics, population risk minimization translates into an optimal control problem \cite{E2018Mean, Li2018Maximum, Esteve2020Large, CiprianiFornasierScagliotti2024}, and empirical risk minimization becomes, in essence, a problem of \emph{simultaneous controllability} of finitely many data points.


From a machine learning perspective, it is natural to parameterize $v_\theta$ as a neural network. With~this choice, \eqref{eq:gen_model} is termed a neural ODE \cite{Chen2018Neural}, with many variants differing in parameterizations, readouts or discretizations  \cite{Dupont2019Augmented,Rubanova2019Latent,Davis2020Time,Kidger2020Neural,Massaroli2020Dissecting,Celledoni2021Structure}. 
For a single-layer ReLU network of constant width $p\in\N$, we~obtain:
\begin{equation}
    \label{eq:NODE}\tag{\textsc{node}}
 \dot{\bfx}(t) = \sum_{i=1}^p \bfw_i(t)\big( \bfa_i(t)\cdot \bfx(t) + b_i(t)\big)_+, \qquad t \in [0,T].
\end{equation}
Here, $\theta=(\bfw_i,\bfa_i,b_i)_{i=1}^p : [0,T]\to(\R^d\times\R^d\times\R)^p$ are the controls, and $z_+\coloneqq\max\{z,0\}$ is the ReLU.

While \eqref{eq:NODE} is highly flexible, its effective parameter count scales with temporal discretization. Rather than mitigating this by prescribing flexible temporal parameterizations for all controls \cite{Davis2020Time,Massaroli2020Dissecting}, we seek simpler architectures that preserve expressivity. We focus on the \emph{semi-autonomous neural ODE}:
\begin{equation}
    \label{eq:saNODE}\tag{\textsc{sanode}}
        \dot{\bfx}(t) = \sum_{i=1}^p \bfw_i \big( \bfa_i\cdot \bfx(t) + b_i t + c_i \big)_+, \qquad t \in [0,T], 
\end{equation}
derived from \eqref{eq:NODE} by restricting $\bfw_i$ and $\bfa_i$ to be time-independent, and $b_i(t) = b_i t + c_i$. Equivalently, this corresponds to parameterizing $v_\theta$ via a time-invariant network applied to the augmented state $(\bfx,t)$. This design, standard in practice \cite{Chen2018Neural,Grathwohl2019Ffjord,Rubanova2019Latent}, makes \eqref{eq:saNODE} an appealing middle ground: its relative simplicity eases implementation compared to \eqref{eq:NODE} while retaining high expressivity \cite{Li2024Universal}.

\subsection{Main results}\label{ss:main_results}

Our goal is to establish expressivity and generalization guarantees for neural ODEs. We demonstrate that \eqref{eq:saNODE} achieves both, avoiding structural limitations of purely autonomous models.

\subsubsection{Expressivity.}

The study of approximation and controllability of neural ODEs has garnered significant attention \cite{Sontag1997rnns,Cuchiero2020Deep,Agrachev2022Control,Li2022Deep,Tabuada2023Universal,RuizBalet2023Neural,Li2024Deep,AlvarezLopez2024Interplay,Cheng2025Interpolation}. A central concept is the \emph{universal interpolation property} (\UIP, \cite{Cuchiero2020Deep,Cheng2025Interpolation}): given finitely many pairwise distinct inputs $\bfx_i$ and targets $\bfy_i$, does there exist a control $\theta(\cdot)$ such that $\Phi^\theta_T(\bfx_i) \approx \bfy_i$ for all $i$?  While \emph{approximate} interpolation (up to arbitrary precision) holds for various models, the exact~\UIP ($\Phi^\theta_T(\bfx_i) = \bfy_i$ for all $i$) is much more rigid. For time-dependent controls as in \eqref{eq:NODE}, constructive proofs typically introduce temporal discontinuities \cite{AlvarezLopez2024Interplay,RuizBalet2023Neural}. However, this machinery breaks down for (semi-)autonomous systems, leaving their exact \UIP as a challenging open question. 

Beyond exact interpolation of points, we introduce the notion of \emph{simultaneous cell controllability} (SCC) to control entire regions of the input space. As illustrated in \Cref{fig:process}, this requires the flow to map disjoint convex cells (e.g., a Voronoi partition) entirely into target balls $B(\bfr_k, \eta)$. Conceptually, this requirement sits between the \UIP and the more rigid notion of continuous ensemble controllability \cite{Agrachev2020Control,Scagliotti2025Minimax}, and related distributional questions arise in the study of normalizing flows 
\cite{Elamvazhuthi2022Neural,RuizBalet2024Control,
AlvarezLopez2025Constructive,Geshkovski2026Constructive}.

With these notions in place, our expressivity results for \eqref{eq:saNODE} establish the following:

\begin{itemize}[leftmargin=*, topsep=3pt, itemsep=2pt] 
    \item We prove the exact \UIP for \eqref{eq:saNODE} in \Cref{thm:SANODE_exact}. In the spirit of \cite{RuizBalet2023Neural,RuizBalet2022Interpolation,RuizBalet2024Control,AlvarezLopez2024Interplay,AlvarezLopez2025Cluster,AlvarezLopez2025Constructive,Geshkovski2026Constructive}, our approach is constructive and yields explicit controls rather than merely proving their existence.
    \item We formalize simultaneous cell controllability and prove that \eqref{eq:saNODE} exhibits this property in \Cref{thm:cell_ctrl}. Via Barron constant estimation \cite{Barron1993Universal,Klusowski2018Approximation}, we derive explicit bounds on the required width.
\end{itemize}



\begin{figure}[t]
    \centering
    \begin{subfigure}[t]{0.3\linewidth}
        \centering
        \includegraphics[width=\linewidth]{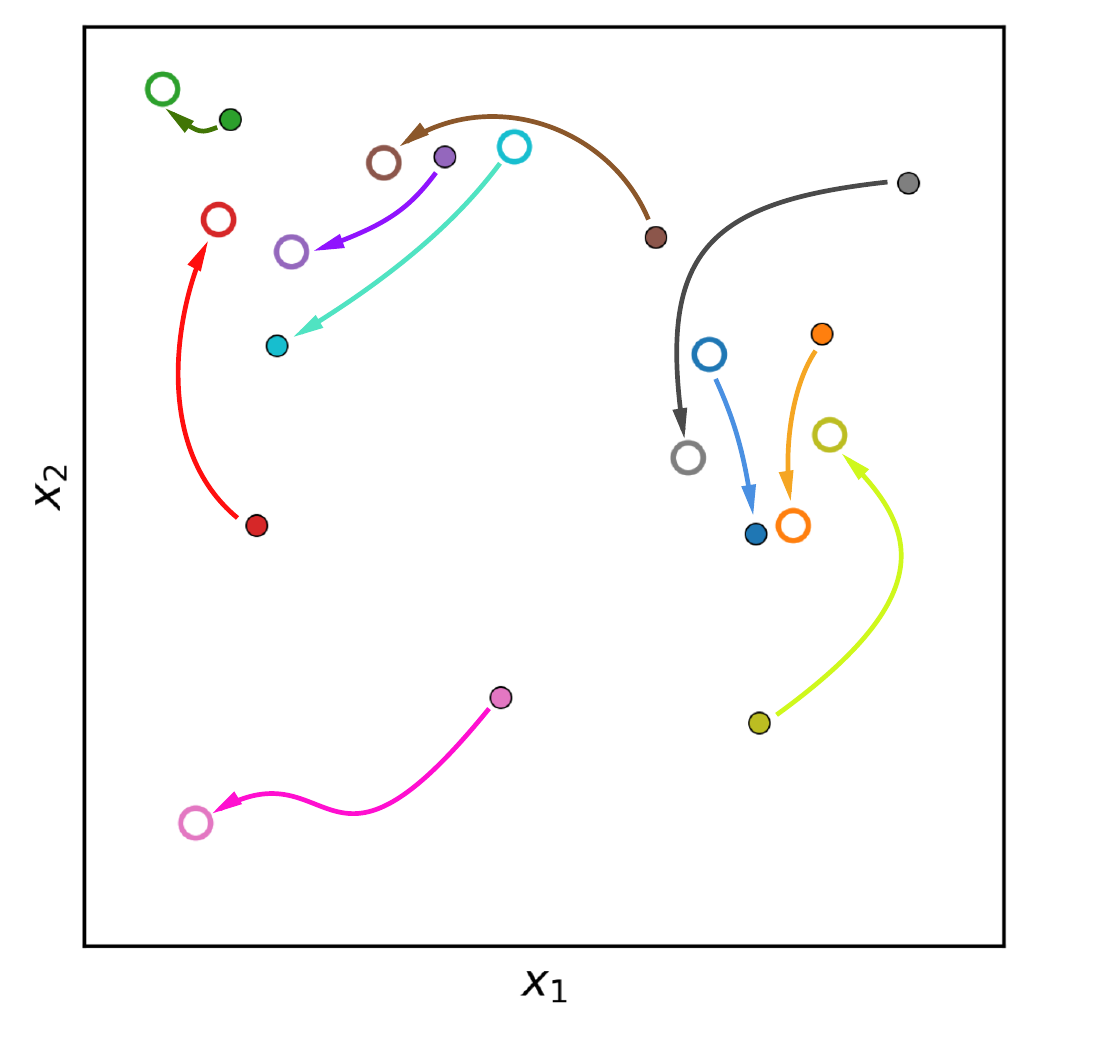}
    \end{subfigure}
    \hfill
    \begin{subfigure}[t]{0.3\linewidth}
        \centering
        \includegraphics[width=\linewidth]{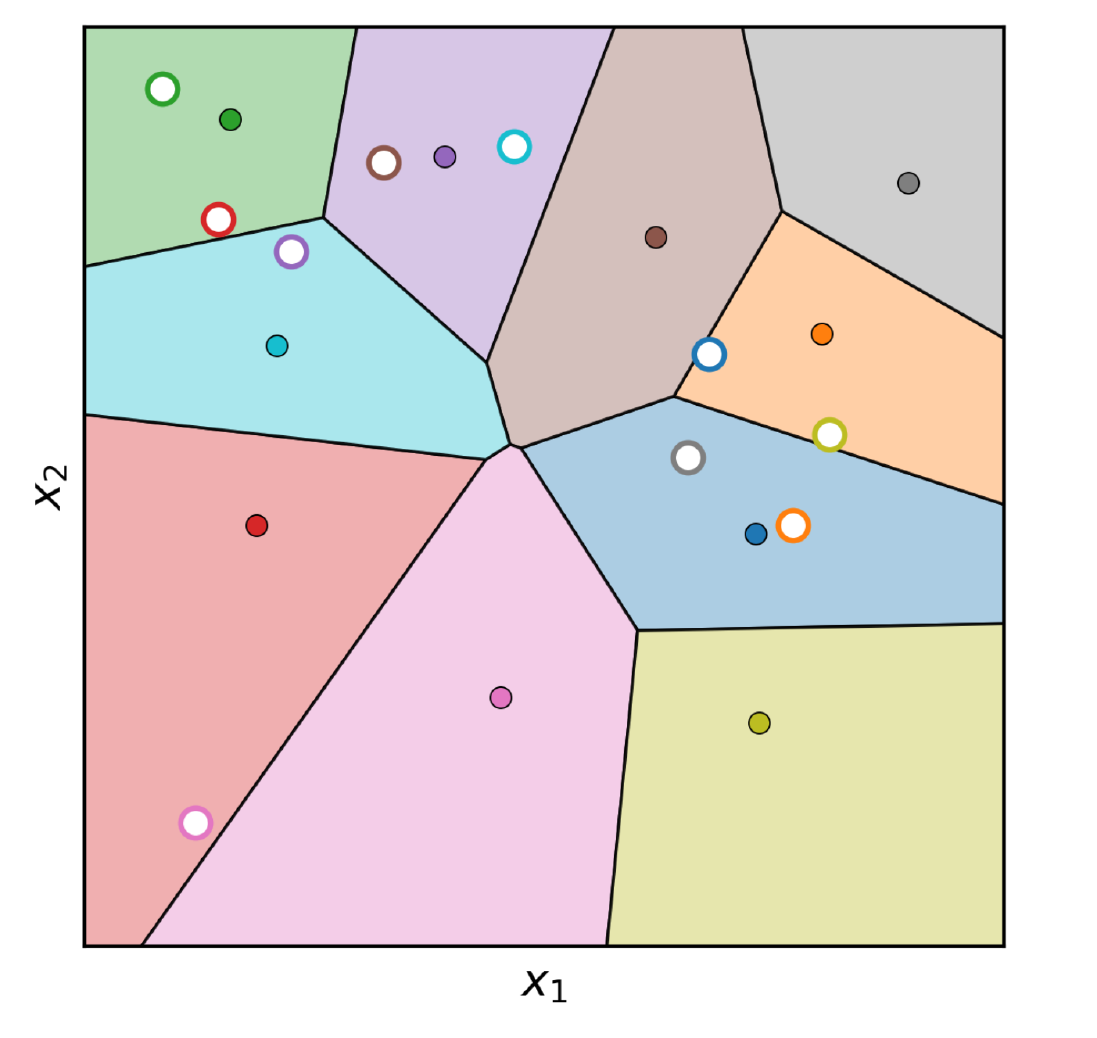}
    \end{subfigure}
    \hfill
    \begin{subfigure}[t]{0.3\linewidth}
        \centering
        \includegraphics[width=\linewidth]{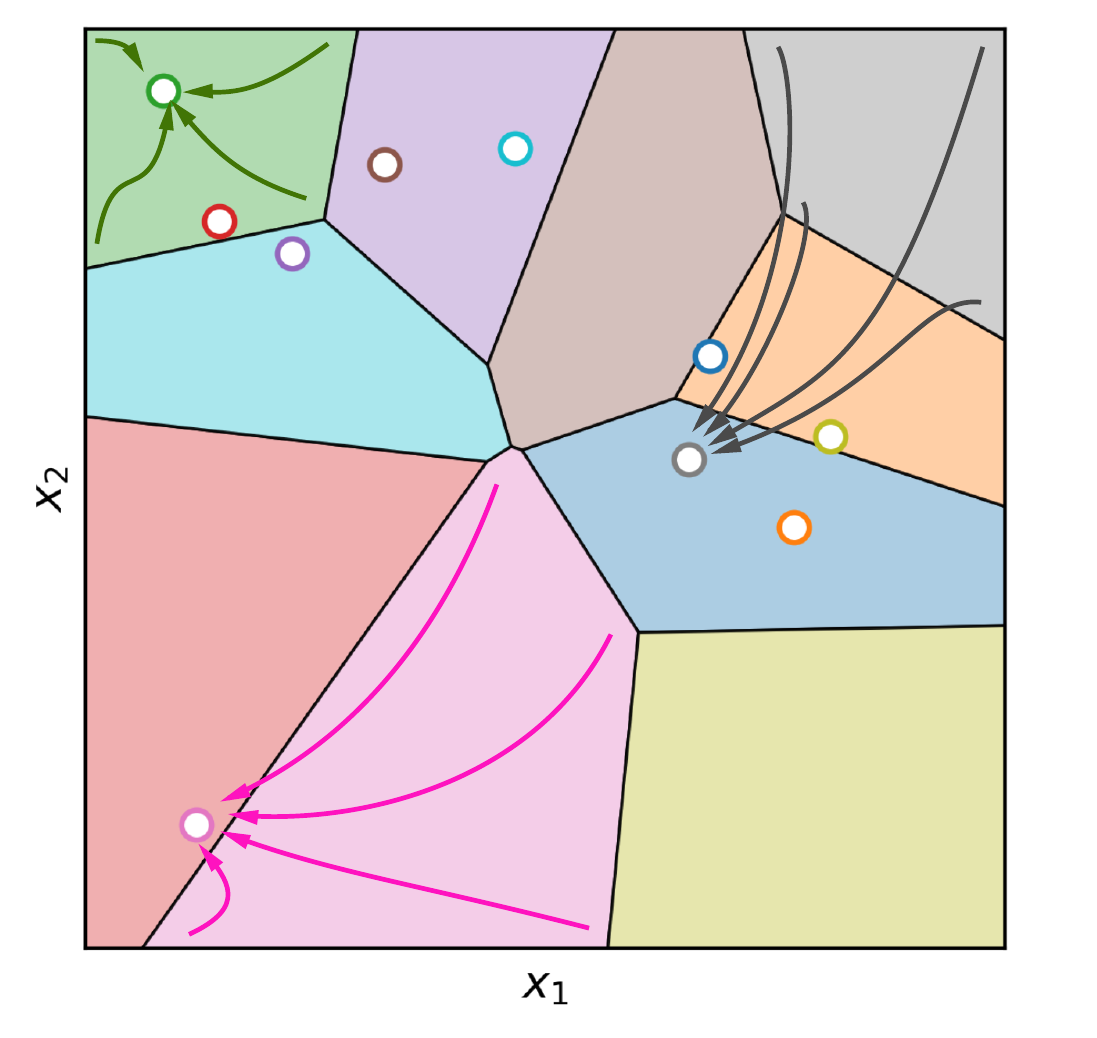}
    \end{subfigure}
   \caption{Overview of the proposed approach. \textbf{Left:} Interpolation of the training data from inputs (solid circles) to targets (hollow circles) via learned trajectories (arrows). \textbf{Center:} Voronoi partition of the input space, where each cell corresponds to a training sample. \textbf{Right:} The flow compresses entire cells toward their respective targets, illustrating the simultaneous cell controllability from \Cref{def:cell_ctrl}.}
    \label{fig:process}
\end{figure}

\subsubsection{Generalization.}
Under the setting of \eqref{eq:gen_model} and the squared Euclidean loss, the population and empirical risks \eqref{eq.pop.risk}--\eqref{eq.emp.risk} become:
\begin{equation}\label{eq:risks}
\mathcal{R}(\theta)=\int \left\|\Phi_T^\theta(\bfx)-y(\bfx)\right\|^2\diff\mu(\bfx),
\qquad
\mathcal{R}_N(\theta)=\frac1N\sum_{i=1}^N \left\|\Phi_T^\theta(\bfx_i)-\bfy_i\right\|^2.
\end{equation}
The central challenge remains bounding the decomposition \eqref{eq.err.decomp}. Existing theoretical approaches typically bound the generalization gap uniformly over a restricted parameter space $\Theta$. For instance, \cite{Marion2023Generalization} proves that for the linear-in-control version of \eqref{eq:NODE} under bounded Lipschitz controls, with probability $1-\delta$:
\[
\mathcal{R}(\theta)
\leq
\mathcal{R}_N(\theta)
+
\sup_{\theta\in\Theta} |\mathcal R(\theta)-\mathcal R_N(\theta)|
\lesssim
\mathcal{R}_N(\theta)
+
\sqrt{\frac{(m+1)\log(R_\Theta mN)}{N}}
+
\frac{m\sqrt{R_\Theta}}{N^{1/4}}
+
\sqrt{\frac{\log(1/\delta)}{N}}
\]
where $m$ is the number of scalar control functions and $R_\Theta\ge0$ encapsulates the bounds on $\Theta$. If the controls are time-independent, this bound improves by removing the $O(N^{-1/4})$ term.

While informative, such bounds highlight an inherent tension: keeping the complexity ($m$ and $R_\Theta$) fixed as $N \to \infty$ shrinks the generalization gap but may preclude exact interpolation, leaving a large empirical risk. Conversely, driving $\mathcal{R}_N(\theta)$ to zero requires increasing complexity when $N$ grows large, which deteriorates the bound. This motivates a natural \textbf{question}:
\vspace{-1mm}
\begin{quote}
   \emph{Can we design data-dependent controls $\theta_N$, while appropriately scaling the network width, to guarantee that the population risk $\mathcal{R}(\theta_N)$ vanishes at a quantifiable rate as $N \to \infty$?}
\end{quote}
\vspace{-1mm}
We provide an affirmative answer by connecting \eqref{eq:saNODE} to nonparametric statistics. Rather than assuming a fixed parametric form, nonparametric methods---such as kernel estimators \cite{Nadaraya1964EstimatingRegression,Watson1964SmoothRegression}, histograms or $k$-nearest-neighbors \cite{Gyorfi2002DistributionFreeRegression,Tsybakov2008Introduction}---construct local approximations directly from the data. For Hölder spaces, these procedures yield explicit, minimax-optimal convergence rates \cite{Stone1982Optimal}. This establishes a natural benchmark: any learning method claiming to generalize should ideally match these rates. We show that \eqref{eq:saNODE} achieves this benchmark by using simultaneous cell controllability to emulate these estimators.


\vspace{2mm}
\noindent\textsc{Informal statement} (\Cref{prop:sanode-rate,prop:sanode-voronoi}). Given a dataset $\mathcal D_N$, a compactly supported probability measure $\mu$, and an $\alpha$-Hölder continuous target $y$, there exists a control $\theta_N$ such that
\[
\mathcal R(\theta_N)
\lesssim
\mathsf{Err}_{\mathrm{np}}+\mathsf{Err}_{\mathrm{node}},
\]
where $\mathsf{Err}_{\mathrm{np}}$ is the statistical error of a piecewise-constant nonparametric estimator built from $\mathcal D_N$, and $\mathsf{Err}_{\mathrm{node}}$ measures how accurately the flow of \eqref{eq:saNODE} realizes this estimator. Provided the network width $p$ grows sufficiently fast with $N$ ($p \ge p_N$), the realization error is absorbed. Taking the expectation over the random draw of $\mathcal{D}_N$ (if the inputs $\bfx_i$ are sampled i.i.d.\ from $\mu$), we recover the classical rates:
\[
\mathbb E_{\mathcal D_N}\big[\mathcal R(\theta_N)\big]
\lesssim
\begin{dcases}
N^{-\frac{2\alpha}{2\alpha+d}} & \quad \text{for histogram estimators}, \\[6pt]
\left(\frac{\log N}{N}\right)^{\frac{2\alpha}{d}} & \quad \text{for nearest-neighbor estimators}.
\end{dcases}
\]
Our width requirement $p_N$---which depends on $N$ and the geometry of the data partition---acts as an achievability guarantee rather than a tight bound. 
It shows that the architecture can support generalization: given sufficient width, there exists a control whose true risk vanishes at an explicit rate. This circumvents the inherent tension of uniform bounds: exact interpolation and optimal statistical consistency can mathematically coexist.

\paragraph*{Related work.} 
Recent literature has explored generalization across various continuous-time models. Building on the uniform bounds of \cite{Marion2023Generalization} for the linear-in-control version of \eqref{eq:NODE}, \cite{Verma2025Analysis} extended this capacity-constrained approach to broader nonlinear neural ODEs. In parallel, other works have studied complementary mechanisms: \cite{Jia2025Feedback} showed that incorporating feedback enhances robustness, while \cite{Bleistein2024Generalization} derived generalization bounds for neural controlled differential equations driven by irregular time series. Complementary to our viewpoint, \cite{Marzouk2024Distribution} develops a
nonparametric statistical theory for likelihood-based distribution learning
with neural ODEs. Closest in spirit is \cite{Scagliotti2023Diffeomorphisms}, which studies the
interpolation--generalization trade-off for learning diffeomorphisms via an $L^2$-regularized optimal control formulation.

\subsubsection{The limits of autonomous models}
Having established the capabilities of \eqref{eq:saNODE}, it is natural to ask whether explicit time dependence is truly necessary. We may remove it entirely, leading to the \emph{autonomous neural ODE}:
\begin{equation}
    \label{eq:aNODE}\tag{\textsc{anode}}
    \dot{\bfx}(t) = \sum_{i=1}^p \bfw_i \big( \bfa_i\cdot \bfx(t) + b_i \big)_+, \qquad t \in [0,T].
\end{equation}
To compensate for the resulting structural constraints, we also consider increasing the depth of the neural network, yielding the \emph{two-layer autonomous neural ODE} (\textsc{2anode}):
\begin{equation}
\label{eq:2layerANODE}\tag{\textsc{2anode}}
\dot{\bfx}(t) = \sum_{j=1}^{p_2} \bfw_j \left( \sum_{i=1}^{p_1} u_{ji} \left( \bfa_{ij}\cdot \bfx(t) + b_{ij}\right)_+ + c_j \right)_+, \qquad t \in [0,T],
\end{equation}
with $\bfw_j, \bfa_{ij} \in \R^d$ and $u_{ji}, b_{ij}, c_j \in \R$ for $p_1,p_2\in\N$. 

While we prove in \Cref{thm:exact_2ANODE} that \eqref{eq:2layerANODE} can achieve the exact \UIP (provided the data satisfy the geometric condition \eqref{eq:interp:segments_disjoint}), we also discuss obstructions to simultaneous cell controllability in autonomous models like \eqref{eq:aNODE} and \eqref{eq:2layerANODE}. Topological constraints prevent autonomous flows from being universal approximators of arbitrary continuous maps \cite{Dupont2019Augmented}; however, our study reveals a more precise gap: they can satisfy the \UIP but fail at cell routing. This distinction, illustrated experimentally in \Cref{ss:necessity.timedep}, confirms that among the models we consider, \eqref{eq:saNODE} provides the simplest setting in which both properties coexist.

\subsection{Organization}
\Cref{s:expressivity} introduces the core controllability notions and provides constructive proofs of the exact \UIP for \eqref{eq:saNODE} and \eqref{eq:2layerANODE}. \Cref{s:nonparam} reviews the nonparametric estimators that serve as our benchmarks. \Cref{s:generalization} then states and discusses our main generalization theorems. In \Cref{s:num}, we numerically evaluate the capacity of neural ODEs to match or outperform these benchmarks on several regression tasks. Finally, \Cref{s:conclusions} offers concluding remarks and future directions. Most proofs are deferred to \Cref{appendix}.

\subsection*{Notation}
For $n \in \N$, we use $[n] = \{1, \dots, n\}$. Vectors are bold (e.g. $\bfu, \bfv$) and we denote the Euclidean inner product of $\bfu, \bfv \in \R^d$ by $\bfu \cdot \bfv$. The open ball in $\R^d$ of radius $r>0$ centered at $\bfx$ is  $B(\bfx, r)$, and the hypercube is $I_R \coloneqq [-R, R]^d$ for $R > 0$. Let $\mathcal{P}_{\mathrm{ac}}(\R^d)$ be the space of absolutely continuous probability measures on $\R^d$. We set
\begin{equation}\label{eq:log+}
\log_+(r)\coloneqq\max\{\log(r),0\}\quad \text{for }r>0,\qquad \text{and}\qquad    \log_+(0)=0.
\end{equation}
Concerning inequalities, we write $a \lesssim b$ (and $a \gtrsim b$) to indicate that $a \leq C b$ (and $a \geq C b$) for some constant $C > 0$ whose independence from the parameters of interest (such as $N$, $\varepsilon$, or $p$) will be clear from the context. If $a \lesssim b$ and $b \lesssim a$, we write $a \asymp b$.


\section{Controllability of neural ODEs}
\label{s:expressivity}


\subsection{Simultaneous point controllability}

We begin with our central notion of expressivity. Consider the general model \eqref{eq:gen_model} with flow map $\Phi_t^\theta$. Since $\Phi_T^\theta:\R^d\to\R^d$ is a homeomorphism, and hence injective, exact interpolation is impossible if distinct inputs $\bfx_i\neq \bfx_j$ are assigned the same target. To avoid this, we restrict our attention to datasets satisfying
\begin{equation}\label{eq:interp:distinct_targets}
\bfx_i\neq \bfx_j
\quad\text{and}\quad
\bfy_i\neq \bfy_j
\qquad\text{for all }i\neq j.
\end{equation}
Any dataset satisfying \eqref{eq:interp:distinct_targets} will be called \emph{admissible}.

\begin{definition}
\label{def:uip}
We say that the control system associated with \eqref{eq:gen_model} possesses the \emph{approximate universal interpolation property (\UIP)} if for every $\eps > 0$, every  $N \in \N$, and every dataset $\{(\bfx_i, \bfy_i)\}_{i=1}^N\subset\R^d\times\R^d$ admissible in the sense of \eqref{eq:interp:distinct_targets}, there exist $T>0$ and an admissible control $\theta$ such that the flow map $\Phi^\theta_T$ generated by \eqref{eq:gen_model} satisfies
\begin{equation}\label{eq:uip}
\|\Phi^\theta_T(\bfx_i) - \bfy_i\| \le \eps, \qquad \text{for all } i\in[N].
\end{equation}
If \eqref{eq:uip} holds with $\eps=0$, we say that the system possesses the \emph{(exact) \UIP}.
\end{definition}

When the vector field is a neural network, the admissible controls in \Cref{def:uip} are understood to range over all widths. Thus, the \UIP is a property of the whole family, not of a fixed-width model. 

\begin{remark}[Time rescaling]\label{rem:time_resc}
In the definition above, the time horizon $T$ is allowed to be chosen freely. However, this is equivalent to a fixed-horizon formulation whenever the family of vector fields under consideration is closed under positive scalar multiplication and time reparameterization. Indeed, for any $T_\ast>0$, the rescaled curve $\bfy(s) \coloneqq \bfx(sT/T_\ast)$ solves an analogous system on $[0,T_\ast]$ driven by the vector field $v_{\theta(sT/T_\ast)}T/T_\ast$, which remains in the same class. All models considered in this work satisfy this property, provided there are no uniform bounds imposed on the parameter space.
\end{remark}

Before detailing our contributions, we briefly review known results for the general model \eqref{eq:NODE}. The \UIP was proved in \cite{RuizBalet2023Neural} using $p=1$ neurons. We state its generalization to arbitrary $p\ge1$ from \cite{AlvarezLopez2024Interplay}.

\begin{theorem}[Simultaneous point controllability for \eqref{eq:NODE} {\citep[Theorem 1]{AlvarezLopez2024Interplay}}]\label{thm:trad_exact}
For every $d\ge2$, $T>0$ and $p\in\N$, the system \eqref{eq:NODE} possesses the exact \UIP. Furthermore, for any admissible dataset of size $N$ (in the sense of \eqref{eq:interp:distinct_targets}), the control can be chosen piecewise constant in time with $2\left\lceil N/p \right\rceil-1$ jumps.
\end{theorem}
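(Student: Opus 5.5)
Since the statement is quoted from \cite{AlvarezLopez2024Interplay}, the plan is to reproduce its constructive argument: piecewise-constant controls in which each phase moves some points and freezes the others. Two elementary facts drive everything. First, with constant $(\bfw,\bfa,b)$ and a single neuron, the flow of $\dot\bfx=\bfw(\bfa\cdot\bfx+b)_+$ fixes the half-space $\{\bfa\cdot\bfx+b\le 0\}$ pointwise. On the complementary half-space it is a shear in direction $\bfw$. If $\bfa\cdot\bfw=0$, this shear is a translation of each point along $\bfw$ at constant speed $\bfa\cdot\bfx+b$. Second, with $p$ neurons we can run up to $p$ such fields at once, provided their active half-spaces contain disjoint sets of data points. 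Neurons that are not needed are switched off by setting $\bfw_i=0$. By \Cref{rem:time_resc}, the horizon $T$ is immaterial, so every phase can be given unit length.

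\textbf{Step 1: separate the coordinates.} Using $d\ge2$, pick a direction $\bfe$ in which all inputs $\bfx_i$ and all targets $\bfy_i$ have pairwise distinct projections. Such a direction exists because only finitely many directions are excluded. Compose with an auxiliary generic translation so that every intermediate configuration also has distinct projections along a second direction $\bfe^\perp$. The sorted order along $\bfe$ then supplies hyperplanes $\{\bfa\cdot\bfx+b=0\}$ with $\bfa=\pm\bfe$ that isolate any chosen tail of the points, in particular one point at a time.

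\textbf{Step 2: moving phases.} Split $[N]$ into $\lceil N/p\rceil$ batches of size at most $p$. In one phase, neuron $j$ of the batch uses a half-space containing only its assigned point, with $\bfw_j\perp\bfa_j$. That point is translated along $\bfw_j$ by any prescribed amount while all other points stay fixed. Since $\bfw_j\perp\bfa_j$, the moving point stays on its side of every separating hyperplane. In the next phase we change the separation direction to the orthogonal one, which lets us correct the component of the target along $\bfa_j$. Thus two phases per batch, differing only in which direction is used for separation and which for motion, bring each point of the batch exactly to its target $\bfy_i$. Targets already reached are never again inside an active half-space.

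\textbf{Step 3: counting and the main obstacle.} With two constant phases per batch, the phases total $2\lceil N/p\rceil$, and hence the jumps total $2\lceil N/p\rceil-1$.

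The delicate step is guaranteeing that the $p$ simultaneously used half-spaces each contain exactly their own point. This has to hold even though points that have already arrived at their targets, and points still waiting, are interleaved along the separating direction. My first attempt would be to handle the points in order of their projection onto $\bfe$. I would choose half-spaces of the form $\{\bfe\cdot\bfx>c\}$ restricted by the ordering, and place the intermediate positions, which are freely chosen, far out along $\bfe^\perp$ so that nested half-spaces do not pick up extra points. Admissibility \eqref{eq:interp:distinct_targets} ensures no two points must reach the same location. If nested half-spaces turn out to be unavoidable, I would compensate for the cumulative speeds by solving a triangular linear system for the durations, which is possible because the speeds are positive.
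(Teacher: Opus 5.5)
\textbf{There is a genuine gap in Step 2.} Your scheme drives each batch exactly to its targets in two phases, while every other point, waiting or already placed, stays outside all active half-spaces. This cannot work for general admissible data.

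Consider the phase in which point $i$ last moves. Its active normal $\bfa$ satisfies $\bfw\perp\bfa$, so $\bfa\cdot\bfx$ is frozen along the trajectory and must already equal $\bfa\cdot\bfy_i$. Hence $\bfy_i$ itself lies in an open active half-space containing no other current point. That is impossible whenever $\bfy_i$ is in the interior of the convex hull of the other current points.

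A concrete failure: take $p=1$, inputs at the vertices of a regular hexagon, and targets clustered near its centre. Whichever point you process first, its target lies inside the pentagon spanned by the five untouched inputs.

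Several other parts of the proposal also do not hold up:
\begin{itemize}
\item Pushing intermediate positions ``far out along $\bfe^\perp$'' does not help. $\bfe^\perp$ is exactly the coordinate frozen in the second phase, so intermediate positions are not free in the separating directions.
\item With normals $\pm\bfe$ only, just the two extreme points along $\bfe$ can be isolated, so you cannot have $p\ge3$ single-point half-spaces at once.
\item With distinct normals, $\bfw_j\perp\bfa_j$ no longer keeps point $j$ out of the other neurons' half-spaces.
\item Your nested fallback violates the invariant that placed points are never reactivated.
\item A triangular system for durations makes no sense, because all neurons of a phase act for the same time; the unknowns must be the weights.
\item A generic translation cannot make projections distinct; you need a rotation.
\end{itemize}

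\textbf{The missing idea} is to stop finishing points batch by batch. Instead, time-multiplex the $N$ parallel hyperplanes of the case $p=N$, as in \Cref{lem:exactcontrol}.

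After a generic rotation, assume $x_1^{(1)}<\dots<x_N^{(1)}$ and that the $y_i^{(2)}$ are pairwise distinct.

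\emph{First block.} Use $\lceil N/p\rceil$ phases. Give every neuron normal $\bfe_1$, a weight with $w^{(1)}=0$, and a threshold between $x_{i-1}^{(1)}$ and $x_i^{(1)}$ for some $i$, with $p$ thresholds per phase.
\begin{itemize}
\item Then $x^{(1)}$ is conserved through the whole block and all activations are constant in time.
\item The neuron attached to index $i$ acts exactly on the points $n\ge i$.
\item Every point moves, possibly in many phases. The total displacement in coordinates $2,\dots,d$ is a lower-triangular system in the weights with positive diagonal.
\item Solving it brings $x_n^{(k)}$ to $y_n^{(k)}$ for all $k\ge2$ and all $n$.
\end{itemize}

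\emph{Second block.} Use another $\lceil N/p\rceil$ phases with normals $\pm\bfe_2$. Interleave the thresholds with the now exact and distinct $y_n^{(2)}$, and take weights parallel to $\bfe_1$. This corrects the first coordinate through an upper-triangular system.

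This gives $2\lceil N/p\rceil$ phases, hence $2\lceil N/p\rceil-1$ jumps. It needs genericity only of the inputs along $\bfe_1$ and of the targets along $\bfe_2$. It is the two-stage mechanism the paper sketches right after the statement, with the $N$ hyperplanes distributed over $\lceil N/p\rceil$ phases. The paper mirrors the same construction in the proof of \Cref{thm:SANODE_exact}.
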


The constructive proof of this theorem exploits the freedom afforded by time-dependent parameters. The key device is the orthogonality constraint $\bfw_i\cdot \bfa_i = 0$: geometrically, this forces the flow generated by each neuron to act as a shear transformation parallel to its own activation hyperplane. This decoupling makes the dynamics tractable and allows the control strategy to split naturally into two phases separated by a single switch. In the first interval, $N$ parallel hyperplanes steer $d-1$ coordinates of the data simultaneously; after the switch, the hyperplanes reorient to handle the remaining coordinate.

By contrast, for \eqref{eq:saNODE} the vectors $\bfw_i$ and $\bfa_i$ are time-independent, so the reorientation mechanism described above is no longer available. The compensating source of flexibility is the explicit time dependence in the argument $ \bfa_i\cdot\bfx+b_i t+c_i$, which makes the activation hyperplanes translate at a constant velocity given by $b_i$. Our first new result---whose proof is deferred to \Cref{ss:saNODE}---shows that this single degree of freedom is sufficient to recover exact controllability without requiring any temporal switching.

\begin{theorem}[Simultaneous point controllability for \eqref{eq:saNODE}]
\label{thm:SANODE_exact}
For every $d\ge2$ and $T>0$, the system \eqref{eq:saNODE} possesses the exact \UIP. Moreover, for any admissible dataset of size $N$ (in the sense of \eqref{eq:interp:distinct_targets}), exact interpolation can be achieved with width $p=2N$.
\end{theorem}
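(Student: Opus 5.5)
We construct the controls explicitly, using neurons that act as shears together with the moving hyperplanes $\bfa_i\cdot\bfx+b_it+c_i=0$, and we use two neurons per data point ($p=2N$). By \Cref{rem:time_resc} we may fix $T$ freely, so take $T=1$; a preliminary time split can also be emulated by the translation speed of the hyperplanes.

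\textbf{Step 1 (generic coordinates).} Pick a unit direction $\bfe$ such that the projections $\bfe\cdot\bfx_i$ are pairwise distinct, and likewise $\bfe\cdot\bfy_i$. This is possible because admissibility \eqref{eq:interp:distinct_targets} excludes only finitely many hyperplanes of directions. Pick an orthogonal direction $\bfe^\perp$, available since $d\ge2$. Order the points so that $\bfe\cdot\bfx_1<\dots<\bfe\cdot\bfx_N$.

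\textbf{Step 2 (sweeping hyperplanes).} For each $k$ we use a pair of neurons with $\bfa=\pm\bfe$ and $\bfw\perp\bfe$, so each neuron is a shear parallel to its own hyperplane. Because $\bfw\perp\bfa$, the quantity $\bfe\cdot\bfx$ is conserved along every trajectory. The activation argument $\bfa\cdot\bfx+bt+c$ then depends on time only through $bt+c$, with $\bfe\cdot\bfx$ fixed.

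Take the combination $\bfw_k(\bfe\cdot\bfx - s_k(t))_+ - \bfw_k(\bfe\cdot\bfx-s'_k(t))_+$, where $s_k,s'_k$ are affine in $t$. This produces a velocity supported on the slab $\{\bfe\cdot\bfx> s_k(t)\}$, saturated beyond $s'_k(t)$. If the slabs are nested and sweep monotonically, each point $\bfx_j$ receives a displacement $\int_0^1\sum_k \bfw_k\, g_k(\bfe\cdot\bfx_j,t)\diff t$, where $g_k$ is an explicit piecewise-linear function of the fixed projection $\bfe\cdot\bfx_j$. In every coordinate orthogonal to $\bfe$, the total displacement of $\bfx_j$ is then $\sum_k \bfw_k\, G_{kj}$, with $G_{kj}$ computable in closed form.

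By choosing the hyperplane speeds so that the thresholds cross the sorted points in succession, $G$ becomes triangular with positive diagonal, hence invertible. We can therefore solve the linear system $\sum_k \bfw_k G_{kj}=\bfy_j^{\perp}-\bfx_j^{\perp}$ exactly for the $\bfw_k\in\bfe^\perp$.

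\textbf{Step 3 (the conserved coordinate).} This is the main obstacle. The shears never change $\bfe\cdot\bfx$, yet the targets require $\bfe\cdot\bfx_j\mapsto\bfe\cdot\bfy_j$, and there is no switching available to reorient as in \Cref{thm:trad_exact}. My plan is to use the second family of neurons, with $\bfa$ along $\bfe^\perp$ and $\bfw\parallel\bfe$.

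During an initial time window, the $\bfe$-shears spread the points in the $\bfe^\perp$ direction into distinct, well-separated, ordered levels. A second set of hyperplanes $\bfe^\perp\cdot\bfx+bt+c=0$, moving with chosen speed, then adjusts $\bfe\cdot\bfx$ by amounts solving another triangular system.

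To keep the two families from interfering, I would exploit the time-dependence $b_it$. The second family's hyperplanes start far away and only reach the data late, and the first family's thresholds have already passed all points by then, so their contributions saturate into constant drifts that are absorbed into the linear solve. Verifying this separation, and that the final $\bfe^\perp$-levels can be preassigned so that the required $\bfe$-increments keep the orderings consistent, is where I expect most of the work. The count is $N$ neurons per family, giving $p=2N$.
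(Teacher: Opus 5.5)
Your skeleton is the paper's. A first block of $N$ shears with hyperplanes normal to $\bfe$ sets the orthogonal coordinates through a triangular system. A second block of $N$ shears with hyperplanes normal to $\bfe^\perp$ then sets $\bfe\cdot\bfx$. The two blocks are separated in time only through the translation of the hyperplanes.

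The gap is the non-interference step you defer, which is the real content of the proof, and the mechanism you sketch for it does not work. If, as you write, every first-block threshold has passed every point and left it in the saturated region, then every point has received a positive contribution from every neuron. So $G$ has no zero entries, and your triangularity argument is lost. Moreover, the second block moves exactly $\bfe\cdot\bfx$, the coordinate that governs first-block activation, so points can leave the saturated region during the second window. The resulting drift is then neither constant nor uniform, and it cannot be absorbed into a linear solve.

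The fix is to make the first block's sweep \emph{deactivating}. Use one ReLU per neuron, active on $\{\bfe\cdot\bfx>s_k(t)\}$, with $s_k$ increasing past the sorted points up to a level $R_1$ above the data. This switches the points off one at a time, which is what gives the triangular structure. Because $s_k$ keeps increasing, the first block then vanishes on $\{\bfe\cdot\bfx\le R_1\}$ at all later times. The second block instead uses an \emph{activating} sweep whose hyperplanes start beyond all first-window trajectories, so it is silent beforehand. Your pairs are unnecessary, and they already spend $2N$ neurons on the first family, which breaks $p=2N$.

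Even with this fix, there is a circular dependence you have not identified. $R_1$ must exceed $\bfe\cdot\bfx$ along the second-window trajectories, which depend on the second block. Conversely, the second block's starting level must lie beyond the $\bfe^\perp$-excursions of the first-window trajectories, and these depend on $R_1$. The thresholds must reach $R_1$ within the window, so the activation integrals scale like $R_1^{-1}$ and the weights like $R_1$.

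The paper breaks this loop with the uniform-in-$R$ trajectory bound of \Cref{lem:exactcontrol}. You fix the second block's starting level beyond that uniform bound, construct the second block, and read off the $\bfe$-excursion $B$ of its trajectories. Only then do you choose $R_1>\max\{R_0,B\}$.

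Finally, your genericity condition is misplaced. Since the first block puts the orthogonal coordinates exactly at $\bfy_i^\perp$ (your Step 2), the second block operates at the levels $\bfe^\perp\cdot\bfy_i$. These levels, not $\bfe\cdot\bfy_i$, must be pairwise distinct, because points at the same level receive identical $\bfe$-increments. Once that holds, no preassignment of levels or ordering consistency is needed.
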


\subsection{Simultaneous cell controllability}

The \UIP captures the capacity to interpolate any admissible finite dataset. To control the population risk, however, one needs the flow to behave well across entire regions of the input space, rather than only at finitely many points. This leads to the following stronger controllability notion.

\begin{definition}\label{def:cell_ctrl}
We say that the control system associated with \eqref{eq:gen_model} possesses the property of \emph{simultaneous cell controllability} (\SCC) if, for every finite family of pairwise disjoint compact convex sets $\{A_k\}_{k=1}^K \subset \R^d$, every collection of target points $\{\bfr_k\}_{k=1}^K \subset \R^d$, and every $\eta > 0$, there exist $T>0$ and an admissible control $\theta$ such that the flow map $\Phi^\theta_T$ generated by \eqref{eq:gen_model} satisfies
\begin{equation}\label{eq:cell-wise-eq}
\Phi_T^\theta(A_k) \subset B(\bfr_k, \eta) \qquad \text{for all } k\in[K].
\end{equation}
\end{definition}

\begin{remark}
    Although framed here in the context of flows, simultaneous cell controllability is an architecture-independent notion. It applies to any parametric map $F_\theta$, from neural networks to classical regression models.
\end{remark}

The property of \SCC is stronger than the approximate \UIP, as setting $A_k=\{\bfx_k\}$ in \eqref{eq:cell-wise-eq} directly recovers condition \eqref{eq:uip}. The fundamental shift is that the flow must now simultaneously route whole macroscopic regions rather than isolated points. Despite this added complexity, the following theorem establishes that \eqref{eq:saNODE} exhibits this property, and provides a sufficient width threshold. This is derived in \Cref{ss:cell.ctrl} by constructing a reference vector field and applying quantitative Barron-type estimates (with the geometric constants detailed in \Cref{rem:Cgeo_cell_ctrl}).

\begin{theorem}[Simultaneous cell controllability for \eqref{eq:saNODE}]\label{thm:cell_ctrl}
Let $d,K\ge 2$ and $R,T>0$. Let $\{A_k\}_{k=1}^K$ be a family of pairwise disjoint, compact, and convex subsets of $I_R\coloneqq[-R,R]^d$, and let $\{\bfr_k\}_{k=1}^K \subset \R^d$ be pairwise distinct target vectors. Then there exist
\[
\mathfrak{C}=\mathfrak{C}\left(d,T,R,(\bfr_k)_k,(A_k)_k\right) >0,\qquad\mathfrak{M}=\mathfrak{M}\left(d,T,R,(\bfr_k)_k\right)>0
\]
and $\eta_0\in(0,1]$ such that for every $\eta\in(0,\eta_0]$ and every integer
\begin{equation}\label{eq:p_cell_ctrl_thm}
p\ge \mathfrak{C}\,\eta^{-2-\mathfrak{C}}\left(1-\log\eta\right)^2,
\end{equation}
there exists a control $\theta$ of width $p$ for \eqref{eq:saNODE} such that
\begin{enumerate}
    \item $\Phi_T^\theta(A_k) \subset B(\bfr_k,\eta)$ for all $k\in[K]$;
    \item $\|\Phi_T^\theta\|_{L^\infty(I_R)} \le \mathfrak M$.
\end{enumerate}
If the targets are not necessarily pairwise distinct, the same conclusions hold for each fixed $\eta>0$; however, the constant $\mathfrak{C}$ is no longer uniform in $\eta$.
\end{theorem}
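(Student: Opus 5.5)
The plan is to build a smooth reference non-autonomous vector field $V:[0,T]\times\R^d\to\R^d$ whose flow realizes the cell routing with room to spare. Then we approximate $V$, viewed as a function of the augmented variable $(\bfx,t)$, by a one-hidden-layer ReLU network of the form $\sum_i \bfw_i(\bfa_i\cdot\bfx+b_it+c_i)_+$. The approximation uses the Barron-type estimates of \cite{Barron1993Universal,Klusowski2018Approximation}. Finally, a Grönwall argument transfers the approximation error to the flow.

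\emph{Reference field.} Since the $A_k$ are disjoint compact convex sets, they are at positive mutual distance $\delta>0$, so their $\delta/4$-neighbourhoods $A_k^{\delta}$ are still disjoint. I would split $[0,T]$ into two phases.

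In the first phase, on $[0,T/2]$, the field contracts each cell toward a point of itself. Take $V(\bfx,t)=-\lambda(\bfx-\bfz_k)\chi_k(\bfx)$ near $A_k$, where $\bfz_k\in A_k$ and $\chi_k$ is a smooth bump equal to $1$ on $A_k^{\delta}$. Convexity guarantees that $A_k$ stays inside $A_k$ along the flow. After time $T/2$ the image of $A_k$ lies in $B(\bfz_k, e^{-\lambda T/2}\diam A_k)$. Choosing $\lambda\asymp \log(1/\eta)$ makes the radius $\lesssim\eta$.

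In the second phase, the $K$ tiny balls around the $\bfz_k$ must be transported to the $\bfr_k$ by a smooth flow. Since $d\ge2$, one can choose smooth disjoint paths $\gamma_k$ from $\bfz_k$ to $\bfr_k$ (distinctness of the $\bfr_k$ is used here). A time-dependent field that equals $\dot\gamma_k(t)$ on a tube around $\gamma_k(t)$ and vanishes elsewhere moves each ball rigidly.

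The resulting $V$ has $C^s$ norms bounded by a constant depending only on the geometry. The exception is a factor $\lambda$, which enters polynomially; this polynomial dependence is what produces the exponent $\eta^{-\mathfrak C}$. The bound $\mathfrak M$ follows because $V$ is compactly supported in a box determined by $R$ and the $\bfr_k$.

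\emph{Approximation and stability.} Extend $V$ smoothly and with compact support in $(\bfx,t)\in\R^{d+1}$. Its Barron norm is then bounded by a Sobolev norm of order $>(d+1)/2+2$, hence by $C_{\mathrm{geo}}\lambda^{m}$ for some $m$. Barron's theorem for ReLU networks gives, for width $p$, an approximant $V_p$ with
\[\sup_{[0,T]\times Q}|V-V_p|\lesssim C_{\mathrm{geo}}\lambda^m p^{-1/2}\]
on a bounded region $Q$; the uniform (rather than $L^2$) version is the one in \cite{Klusowski2018Approximation}, up to log factors.

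Grönwall applied to the two flows gives
\[\|\Phi_T^{V}-\Phi_T^{V_p}\|_{L^\infty}\le T\|V-V_p\|_\infty e^{T\Lip(V)}.\]
Here $\Lip(V)\lesssim\lambda\asymp\log(1/\eta)$, so $e^{T\Lip V}$ is a negative power $\eta^{-c}$ of $\eta$. Requiring the right-hand side to be $\le\eta/2$ yields a condition of the form $p\gtrsim \eta^{-2-\mathfrak C}(1-\log\eta)^{2}$, which is \eqref{eq:p_cell_ctrl_thm}. One must also check that trajectories of $V_p$ stay in $Q$, which follows from the same estimate.

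\emph{Main obstacle.} I expect the delicate step to be the explicit bookkeeping of how the Barron norm and the Lipschitz constant scale with $\lambda$. This bookkeeping is what converts $e^{T\Lip V}$ into a power of $\eta$ rather than something worse. The contraction must therefore be designed as a linear field $-\lambda(\bfx-\bfz_k)$ multiplied by an $\eta$-independent cutoff, so that derivatives grow only linearly in $\lambda$.

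If the targets coincide, the routing phase must merge balls into the same ball. Merging requires the contracted balls to pass arbitrarily close to one another, so the tube widths and hence the constants depend on $\eta$. This explains the final non-uniformity claim.
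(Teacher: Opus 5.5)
Your proposal is correct and follows essentially the same route as the paper: a compress-then-transport reference field (a linear contraction at rate $\lambda\asymp\log(1/\eta)$ times an $\eta$-independent cutoff, then rigid transport along disjoint tubes around smooth paths), a Sobolev bound on its Barron norm that is linear in $\lambda$, the uniform Klusowski--Barron approximation in the augmented variable $(t,\bfx)$, and Grönwall with $\Lip_\bfx v\lesssim\log(1/\eta)$, which turns $e^{T\Lip}$ into the power $\eta^{-\mathfrak C}$. Two small points. The polynomial (in fact linear) dependence of the Barron norm on $\lambda$ only produces the factor $(1-\log\eta)^2$; the power $\eta^{-\mathfrak C}$ comes from the Grönwall exponential, as you correctly say later. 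The two phases must also be glued smoothly in time, since a hard switch at $T/2$ would make the Barron norm in $(t,\bfx)$ infinite; the paper does this with a cutoff $\varphi$ vanishing on $[0,T/3]$ and equal to $1$ on $[2T/3,T]$, with paths $\gamma_k$ constant on $[0,2T/3]$.
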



\begin{remark}[Explicit geometric constants]
\label{rem:Cgeo_cell_ctrl}
The constants $\mathfrak C$, $\mathfrak M$, and $\eta_0$ in \Cref{thm:cell_ctrl} may be chosen explicitly. A possible choice, obtained by combining \Cref{lem:steering-estimates} with \Cref{lem:barron_approximation}, is governed by the geometry of the cells and their targets. Assume that the target vectors $\bfr_1,\dots,\bfr_K$ are pairwise distinct, let
\[
D_* \coloneqq \max_{k\in[K]}\diam(A_k),
\qquad
s_* \coloneqq \min_{i\neq j}\dist(A_i,A_j),
\]
and let $\gamma_k:[0,T]\to\R^d$ be smooth pairwise disjoint curves such that $\gamma_k(0)\in A_k$, $\gamma_k(T)=\mathbf{r}_k$, $\gamma_k$ is constant on $[0,2T/3]$, and $\|\gamma_k(t)\|<1+\max_{k\in[K]}\|\mathbf{r}_k\|+\sqrt{d}\,R$ for all $t\in[0,T]$---their existence is guaranteed for $d\ge2$, see \Cref{lem:steering-corridors}. Define
\[
m \coloneqq \min_{t\in[0,T]}\min_{i\neq j}\|\gamma_i(t)-\gamma_j(t)\|,
\qquad
G \coloneqq 1+\max_{k\in[K]}\max_{1\le j\le d+5} \left\|\frac{\diff^j\gamma_k}{\diff t^j}\right\|_{L^\infty(0,T)},
\]
and let
\[
L
\coloneqq
C_{d,T}\left[
\frac{G}{m}
+
\frac{1+D_*/s_*}{T}\left(1+\log_+(4D_*)\right)
\right],
\]
and
\[
B
\coloneqq
C_{d,T}\,K\left[
T G^{d+5}(m^d+m^{-4})
+
\frac{(D_*+s_*)^{d+1}(1+s_*^{-(d+4)})}{T}
\left(1+\log_+(4D_*)\right)
\right].
\]
Exploiting these quantities---see the proofs of \Cref{lem:steering-estimates,lem:barron_approximation} for a derivation---one may take
\begin{align}\label{eq:defMp}
\eta_0 \coloneqq \min\left\{1,\frac{s_*}{2},\frac{m}{2}\right\},\quad \mathfrak M \coloneqq 2+\max\left\{T,\max_{k\in[K]}\|\mathbf{r}_k\|+\frac32\sqrt{d}\,R\right\},\\
\mathfrak{C} \coloneqq  
\max\left\{
3,\,
2T L,\,
4C_d^2\,\mathfrak M^4\,T^2\,B^2 e^{2T L}
\right\}.\notag
\end{align}
Thus, while $\mathfrak M$ depends solely on the domain size and target locations, $\mathfrak{C}$ captures the geometric bottleneck of the partition: the term $1/s_*$ in $L$ forces an exponential penalty $\exp(c/s_*)$ in the factor $e^{2T L}$ as $s_* \to 0$.
\end{remark}

\begin{remark}[Width requirement and sharpness]\label{rem:superpoly}
The width condition \eqref{eq:p_cell_ctrl_thm} provides a sufficient condition rather than a sharp complexity estimate. As noted in \Cref{rem:Cgeo_cell_ctrl}, the geometric constant $\mathfrak{C}$ incurs an exponential penalty as the minimum cell separation $s_*$ shrinks. Consequently, for the refined partitions used later to establish statistical consistency, this geometric bottleneck can force a super-polynomial threshold $p_N$ as the dataset size $N$ grows. This reflects the curse of dimensionality inherent in isolating many small regions. 

We emphasize that this scaling acts purely as an achievability guarantee. The specific bound arises from our chosen proof strategy---specifically, applying the uniform approximation results of \cite[Theorem 2]{Klusowski2018Approximation}. Employing alternative tools (e.g., \cite[Proposition 6]{Bach2017Breaking}) could alter the trade-off between the convergence rate in $p$ and the magnitude of $\mathfrak{C}$. Determining the sharp intrinsic width required for simultaneous cell  controllability remains an open quantitative question.
\end{remark}

\subsection{Autonomous models}\label{sss:control_auton}

Having established the \UIP and \SCC for \eqref{eq:saNODE}, it is natural to ask whether explicit time dependence is truly necessary. To answer this, we remove it entirely and analyze the autonomous model \eqref{eq:aNODE} and its two-layer variant \eqref{eq:2layerANODE}.

While \cite{AlvarezLopez2024Interplay} establishes the approximate \UIP for \eqref{eq:aNODE}---including explicit error decay rates as the width grows---bridging the gap from approximate to exact interpolation remains unresolved:

\vspace{0.5em}
\noindent\textbf{Open Problem.} \emph{Does the exact \UIP hold for the single-layer autonomous system \eqref{eq:aNODE}?}
\vspace{0.5em}

The difficulty is structural: unlike time-dependent models, which can concatenate flow stages by switching controls, autonomous single-layer fields are highly rigid. Specifically, they cannot be spatially localized. Because they are finite superpositions of ridge functions---whose Fourier transforms are supported on one-dimensional sets---such nontrivial vector fields can never belong to $L^2(\R^d)$. Though elementary, this fact seems absent from standard references, so we provide a proof.

\begin{lemma}
\label{lem:non.integ}
Let $d\ge2$ and let $\sigma\in \mathscr{C}^0(\R)$ have at most polynomial growth.
For any $p\ge1$, define
\[
g(\bfx)=\sum_{j=1}^p w_j\,\sigma(\bfa_j\cdot \bfx+b_j),
\qquad (\bfx\in\R^d),\qquad \text{with }\bfa_j\in\R^d,\quad w_j,b_j\in\R.
\]
If $g\not\equiv0$, then $g\notin L^2(\R^d)$. In particular, $g$ cannot have compact support unless $g\equiv0$.
\end{lemma}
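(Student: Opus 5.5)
The plan is to pass to the Fourier side, where each ridge term is concentrated on a line. A finite union of lines is Lebesgue-null when $d\ge2$, while the Fourier transform of an $L^2$ function is itself an $L^2$ function. Since $\sigma$ is continuous with polynomial growth, every term $\bfx\mapsto w_j\sigma(\bfa_j\cdot\bfx+b_j)$ is a tempered distribution, and so is $g$. We may therefore compute $\hat g$ in $\mathcal S'(\R^d)$.

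\textbf{Step 1 (support of a single ridge).} Let $h(\bfx)=\varphi(\bfa\cdot\bfx)$ with $\varphi(s)=\sigma(s+b)$. If $\bfa=0$, then $h$ is constant and $\hat h$ is a multiple of $\delta_0$. If $\bfa\neq0$, we apply a rotation $Q$ with $Q^\top\bfa=|\bfa|\bfe_1$. Since the Fourier transform commutes with rotations, we may assume $h(\bfx)=\varphi(|\bfa|x_1)$, writing $\bfx=(x_1,\bfx')$. For a Schwartz function $\psi$ we have $\langle \hat h,\psi\rangle=\langle h,\hat\psi\rangle$. Fubini gives
\[
\langle h,\hat\psi\rangle=\int_\R \varphi(|\bfa|x_1)\Big(\int_{\R^{d-1}}\hat\psi(x_1,\bfx')\diff\bfx'\Big)\diff x_1 .
\]
The inner integral is, up to a constant, the one-dimensional Fourier transform in $\xi_1$ of $\xi_1\mapsto\psi(\xi_1,\mathbf 0)$. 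This is just Fourier inversion in the $\bfx'$ variables. Hence it vanishes identically whenever $\psi$ vanishes on a neighbourhood of the line $\R\bfe_1$. Therefore $\supp\hat h\subset\R\bfa$, and in all cases $\supp\hat h\subset \R\bfa\cup\{0\}$.

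\textbf{Step 2 (conclusion).} By linearity, $\supp\hat g\subset Z\coloneqq\{0\}\cup\bigcup_{j}\R\bfa_j$. This is a closed set, and it has Lebesgue measure zero because $d\ge2$. Suppose now that $g\in L^2(\R^d)$. Then by Plancherel the distribution $\hat g$ coincides with a function $G\in L^2$. Testing against $\psi\in C_c^\infty(\R^d\setminus Z)$ gives $\int G\psi=0$, so $G=0$ a.e.\ on the open set $\R^d\setminus Z$. Since $|Z|=0$, we get $G=0$ a.e., and Plancherel yields $g=0$ a.e. As $g$ is continuous, $g\equiv0$, which contradicts $g\not\equiv0$.

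For the final claim, a continuous compactly supported $g$ lies in $L^2$, so the first part forces $g\equiv0$.

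The only delicate point is Step 1, namely justifying the Fubini step and the identification of $\int\hat\psi(x_1,\cdot)\diff\bfx'$ for a merely polynomially growing $\varphi$. This is handled by the rapid decay of $\hat\psi$. Alternatively, one can note that $\hat h=\hat\varphi\otimes\delta_{0}$ in rotated coordinates, up to scaling, since the tensor product of tempered distributions is well defined.
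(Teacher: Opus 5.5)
Your proposal is correct and follows the paper's route: each ridge term has Fourier transform supported on the line $\R\bfa_j$, the finite union of these lines is Lebesgue-null for $d\ge2$, and Plancherel then forces $g=0$. Beyond the paper, you prove the support claim (which the paper only asserts), treat the case $\bfa_j=0$ explicitly, and use continuity of $g$ to pass from $g=0$ a.e.\ to $g\equiv0$; all of these steps are sound.
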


\begin{proof}
For each $j$, the Fourier transform of the ridge function $\bfx\mapsto \sigma(\bfa_j\cdot \bfx+b_j)$ is supported, in the sense of tempered distributions, on the line $L_j=\{\lambda \bfa_j:\lambda\in\R\}\subset\R^d.$ By linearity, $\widehat g$ is then supported in the finite union $\bigcup_{j=1}^p L_j$, which has Lebesgue measure zero in $\R^d$ since $d\ge2$ by hypothesis. 

Suppose $g\in L^2(\R^d)$. Then, by Plancherel, $\widehat g\in L^2(\R^d)$ as well. But $\widehat g=0$ a.e. because an $L^2$--function supported on a null set must vanish almost everywhere, and therefore $g\equiv0$.
\end{proof}

While \Cref{lem:non.integ} does not strictly rule out exact interpolation, it explains why ``pointwise'' steering strategies are exceptionally hard to implement with \eqref{eq:aNODE} alone. A minimal way to restore localization without reintroducing time dependence is to add one compositional layer. By composing ReLU units, the two-layer field \eqref{eq:2layerANODE} can generate vector fields supported on convex polytopes (see \Cref{lem:compact_support} below), enabling the construction of localized controls.

For \eqref{eq:2layerANODE}, we require the stronger assumption that input-output segments are mutually disjoint:
\begin{equation}\label{eq:interp:segments_disjoint}
[\bfx_i,\bfy_i]\cap[\bfx_j,\bfy_j]=\varnothing \qquad\text{for all } i\neq j,
\end{equation}
where $[\bfa,\bfb]$ denotes the line segment between $\bfa$ and $\bfb$. Note that this naturally implies $\bfx_i\neq \bfy_j$ for $i\neq j$.

\begin{theorem}[Simultaneous point controllability for \eqref{eq:2layerANODE}]\label{thm:exact_2ANODE}
Let $N\ge 1$, $d\ge 2$, and $T>0$ be fixed.
Consider any dataset $\{(\bfx_i,\bfy_i)\}_{i=1}^N\subset\R^d\times\R^d$ admissible in the sense of \eqref{eq:interp:distinct_targets}, and additionally satisfying \eqref{eq:interp:segments_disjoint}. Then there exists a (constant) control $\theta$ such that the flow map $\Phi_T^\theta$ generated by \eqref{eq:2layerANODE}, with  $p_1=2d$ and $p_2=N$, satisfies
\[
\Phi_T^\theta(\bfx_i)=\bfy_i,\qquad\text{for all }i\in[N].
\]
\end{theorem}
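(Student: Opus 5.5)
The plan is to build, for each $i\in[N]$, a vector field that vanishes outside a thin convex neighborhood of the segment $[\bfx_i,\bfy_i]$ and is constant (equal to $(\bfy_i-\bfx_i)/T$) on that segment, and then take their sum as the autonomous field. Each such field will be realized by one outer neuron $j=i$ of \eqref{eq:2layerANODE}, using $p_1=2d$ inner ReLUs to carve out a convex polytope (via the localization mechanism of \Cref{lem:compact_support}).

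\textbf{Step 1 (separation).} By \eqref{eq:interp:segments_disjoint} and compactness, $\delta\coloneqq\min_{i\ne j}\dist([\bfx_i,\bfy_i],[\bfx_j,\bfy_j])>0$. For each $i$, I choose an orthonormal frame $\bfe^i_1=(\bfy_i-\bfx_i)/\|\bfy_i-\bfx_i\|,\bfe^i_2,\dots,\bfe^i_d$. If $\bfx_i=\bfy_i$ for some $i$, no field is needed there; I simply omit that neuron by setting $\bfw_i=0$. I then define the box
\[
Q_i=\{\bfx:\ -\rho\le \bfe^i_1\cdot(\bfx-\bfx_i)\le \|\bfy_i-\bfx_i\|+\rho,\ |\bfe^i_k\cdot(\bfx-\bfx_i)|\le\rho,\ k\ge2\},
\]
with $\rho<\delta/(2\sqrt d)$, so that the boxes $Q_i$ are pairwise disjoint and $Q_i$ contains $[\bfx_i,\bfy_i]$ in its interior.

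\textbf{Step 2 (localized neuron).} I take the inner units to be the $2d$ affine functions $h_{k}^{\pm}(\bfx)$ measuring the signed distance past each face of $Q_i$ (with the normal vectors $\pm\bfe^i_k$), and set
\[
\phi_i(\bfx)=\Big(c_i-\lambda\sum_{k,\pm}(h^\pm_k(\bfx))_+\Big)_+.
\]
This has the form of \eqref{eq:2layerANODE} with $u_{ji}=-\lambda$ and $c_j=c_i>0$. Since each face term vanishes inside $Q_i$, we get $\phi_i\equiv c_i$ on $Q_i$, and $\phi_i$ vanishes once the total excess exceeds $c_i/\lambda$. Choosing $\lambda$ large (equivalently, applying \Cref{lem:compact_support}) makes $\supp\phi_i$ lie within the $\rho$-enlargement of $Q_i$, and these enlargements remain disjoint for $\rho$ small relative to $\delta$. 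I then set $\bfw_i=\bfe^i_1\,\|\bfy_i-\bfx_i\|/(T c_i)$.

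\textbf{Step 3 (flow).} The resulting field $v$ is globally Lipschitz and autonomous. On the enlarged support of neuron $i$, only that neuron is active, so $v=\bfw_i\phi_i$. Along the segment $[\bfx_i,\bfy_i]\subset Q_i$ the field equals the constant $(\bfy_i-\bfx_i)/T$. The straight trajectory $\bfx_i+t(\bfy_i-\bfx_i)/T$ therefore solves the ODE, because it stays in the interior of $Q_i$ for $t\in[0,T]$. By uniqueness, $\Phi^\theta_T(\bfx_i)=\bfy_i$.

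The main obstacle is Step 2: checking precisely that the two-layer ReLU expression is exactly constant on a box containing the segment and exactly zero outside a small neighborhood, with exactly $2d$ inner units. This means verifying the face/normal bookkeeping and that the outer ReLU truncation gives compact support. I would lean on \Cref{lem:compact_support} for this, adjusting $\lambda$ and $\rho$ so that the supports stay disjoint.
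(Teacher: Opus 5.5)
Your proposal is correct and follows the paper's proof: disjoint boxes around the segments, one outer neuron per data point of the form $\bfw_i\bigl(c_i-\lambda\sum(h^\pm_k)_+\bigr)_+$ built from $2d$ face ReLUs (exactly the field $\bfu_i(1-\kappa_i B_i)_+$ that \Cref{lem:compact_support} yields for a constant $g_i$), and uniqueness along the straight trajectory. One detail needs tightening: $\rho<\delta/(2\sqrt d)$ separates the boxes but not necessarily their $\rho$-enlargements, so either take $\rho\le\delta/(4\sqrt d)$ (the paper uses $r=\delta/(4\sqrt d)$) or make the support margin $c_i/\lambda$ smaller than $\delta/2-\rho\sqrt d$.
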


The proof is deferred to \Cref{sss.2layers}. 
Although \eqref{eq:2layerANODE} achieves exact \UIP without any form of time dependence, its mechanism is less efficient than that of \eqref{eq:saNODE}: it requires $p_1=2d$ and $p_2=N$, hence \(p_1\cdot p_2=2dN\) first-layer units and $p_2=N$ second-layer units. By contrast, \eqref{eq:saNODE} uses \(p=2N\) units.

\begin{remark}[Dimensional constraints]\label{rem:genericity}
Condition \eqref{eq:interp:segments_disjoint} depends strongly on the dimension $d$. Because our current construction relies on vector fields localized strictly around straight segments $[\bfx_i, \bfy_i]$, these paths must not cross. In $d \ge 3$, linear segments generically do not intersect, so this condition is easily satisfied. In $d=2$, however, straight-line collisions are frequent, making the assumption highly restrictive (see \Cref{fig:condition}).

Nevertheless, the \UIP can theoretically be established for all $d \ge 2$ without requiring \eqref{eq:interp:segments_disjoint}. Instead of moving along straight lines, particles could be routed through intermediate waypoints to avoid collisions, using localized two-layer ``connectors'' to steer the flow around obstacles. While mathematically feasible, the technical length of such a construction places it beyond the scope of this paper, and we defer it to future work.
\end{remark}

\begin{figure}[t]
    \centering
    \includegraphics[width=0.7\linewidth]{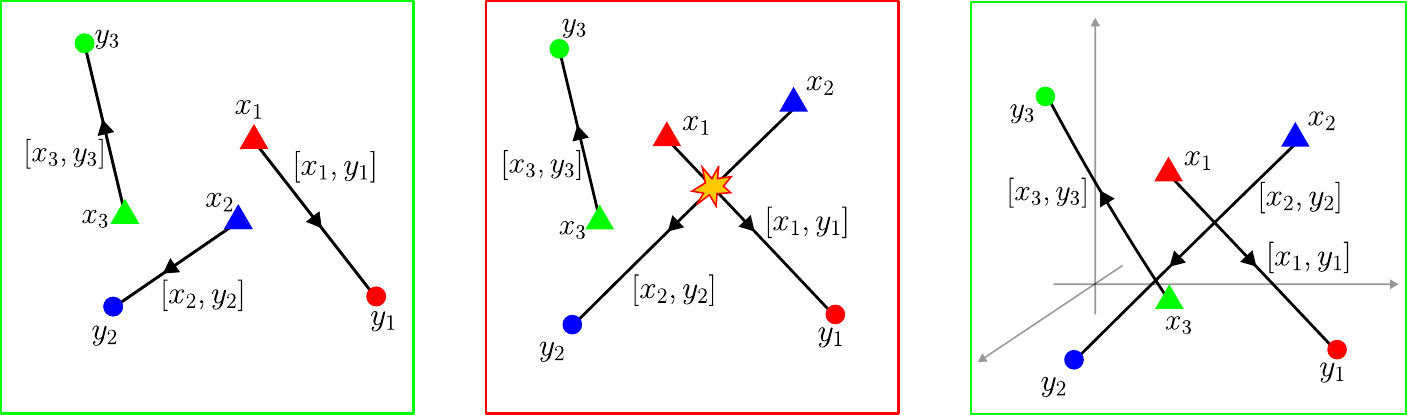}
    \caption{Illustration of the nondegeneracy condition \eqref{eq:interp:segments_disjoint}. \textbf{Left \& Right:} Admissible datasets for $d=2$ and $d=3$. \textbf{Center:} A degenerate data configuration. As discussed in \Cref{rem:genericity}, condition \eqref{eq:interp:segments_disjoint} is generic for $d \geq 3$.}
    \label{fig:condition}
\end{figure}

At this point, the distinction between point interpolation and cell routing becomes essential. While autonomous systems can map finite point clouds arbitrarily close to their targets by letting isolated trajectories bypass each other in $d \ge 2$ (\cite[Theorem 7]{AlvarezLopez2024Interplay}), routing macroscopic cells $A_k$ introduces structural obstructions. Since autonomous trajectories cannot cross, models like \eqref{eq:aNODE} or \eqref{eq:2layerANODE} face a topological barrier. For example, if the target of $A_1$ lies behind $A_2$, the flow would need to push $A_1$ directly through the space initially occupied by $A_2$, which violates uniqueness. The explicit time dependence in \eqref{eq:saNODE} circumvents this by allowing trajectories to cross safely in space-time.

Furthermore, \Cref{def:cell_ctrl} strictly requires $\Phi_T^\theta(A_k) \subset B(\bfr_k, \eta)$. A weaker $L^2$-a.e.\ control might bypass the obstruction by sacrificing small-measure subsets of the cells: failing to route these small regions to their targets would free up physical space to create ``corridors'' for other cells to pass through, potentially accommodating autonomous systems.

In summary, while autonomous systems can interpolate points---especially with added depth as in \eqref{eq:2layerANODE}---their lack of a time coordinate to schedule motion makes them structurally ill-suited for cell routing. Thus, among the architectures considered here, \eqref{eq:saNODE} provides the simplest setting where exact interpolation and generalization coexist.

\section{Nonparametric estimation}\label{s:nonparam}

In this section, we recall the construction and fundamental properties of piecewise-constant nonparametric estimators. Our analysis focuses on two canonical approaches:
\begin{enumerate}
    \item \textbf{Histogram estimators}, which partition the input domain using a fixed uniform grid.
    \item \textbf{Nearest-neighbor estimators}, which rely on Voronoi tessellations \cite{Aurenhammer1991VoronoiSurvey, BiauDevroye2015NearestNeighbor}.
\end{enumerate}
While the results in this section are standard, proofs are collected in \Cref{sec:proof_nonparam}, as the precise statements we need do not appear verbatim in the literature. We refer the interested reader to \cite{Gyorfi2002DistributionFreeRegression, Tsybakov2008Introduction} and references therein for a deeper dive into the theory of nonparametric estimators.

\subsection{Histogram estimator}\label{ss:histogram}

We first analyze the histogram approach. Fix $R>0$, a resolution $h \in (0, 2R]$ and let 
\begin{equation}\label{eq:index.set}
\mathcal{K} \coloneqq \left\{0, \;\dots,\; \left\lceil 2R/h \right\rceil - 1 \right\}^d
\end{equation}
be the corresponding multi-index set. For each $k \in \mathcal{K}$, we define the cube
\begin{equation}\label{eq:cubes}
Q_k \coloneqq   I_R\cap\prod_{j=1}^d \left[ -R + k_j h, \, -R + (k_j+1)h \right),
\end{equation}
ensuring that the collection $\{Q_k\}_{k \in \mathcal{K}}$ forms a partition of $I_R$ up to a $\mu$-null boundary set. The total number of cells is given by
\begin{equation}\label{eq:number.cells}
    K_h \coloneqq |\mathcal{K}| = \left\lceil \frac{2R}{h} \right\rceil^d \lesssim h^{-d},
\end{equation}
and the diameter of each cell satisfies $\diam(Q_k) \leq \sqrt{d}\,h$. We approximate $y(\cdot)$ by averaging its values within each cell. This leads to the definition of both a population-level and an empirical estimator:
\begin{itemize}[leftmargin=2em]
    \item The population average $y_h\colon I_R\to\R^d$ is the best piecewise-constant approximation of $y$ in $L^2(\mu)$:
\begin{equation}\label{eq:pop-avg-def}
    y_h(\bfx) \coloneqq \sum_{k \in \mathcal{K}} \bfs_k\,\1_{Q_k}(\bfx),
\end{equation}
where the coefficients $\bfs_k\in\R^d$ are defined by
\begin{equation}\label{eq:pop-avg-coeffs}
    \bfs_k \coloneqq
    \begin{cases}
    \displaystyle \frac{1}{\mu(Q_k)}\int_{Q_k} y(\bfx) \diff\mu(\bfx), \qquad & \text{if } \mu(Q_k) > 0, \\[8pt]
    0, \qquad & \text{if } \mu(Q_k) = 0.
    \end{cases}   
\end{equation}
Note that $y_h(\cdot)$ is in fact the best piecewise-constant approximation of $y(\cdot)$ in $L^2(\mu)$, as each $\bfs_k$ minimizes the local $L^2(\mu)$-error over $Q_k$.

\item The empirical average $y_{N,h}:I_R\to\R^d$, instead, is computed directly from the dataset:
\begin{equation}\label{eq:emp-avg-def}
    y_{N,h}(\bfx) \coloneqq \sum_{k \in \mathcal{K}} \bfS_k\,\mathbf{1}_{Q_k}(\bfx).
\end{equation}
Letting $N_k \coloneqq \sum_{i=1}^N \mathbf{1}_{Q_k}(\bfx_i)$ denote the number of samples falling into $Q_k$, the coefficients $\bfS_k\in\R^d$ are now defined as
\begin{equation}\label{eq:emp-avg-coeffs}
    \bfS_k \coloneqq
    \begin{cases}
    \displaystyle \frac{1}{N_k}\sum_{i : \bfx_i \in Q_k} y(\bfx_i), \qquad & \text{if } N_k \geq 1, \\[8pt]
    0,\qquad  & \text{if } N_k = 0.
    \end{cases}
\end{equation}
\end{itemize}
A schematic representation of $y_{N,h}(\cdot)$ when $d=2$ can be seen in \Cref{fig:main_nonparam} (central column). We now establish the convergence rate for the empirical histogram estimator.

\begin{proposition}\label{prop:hist-rate}
Let $\mathcal{D}_N = \{(\bfx_i, y(\bfx_i))\}_{i=1}^N$ with $\bfx_1,\dots,\bfx_N$ drawn i.i.d.\ from $\mu\in\Pac(I_R)$, and $y \in \mathscr{C}^0(I_R; \R^d)$. Then, for any $h \in (0,2R]$, the estimator $y_{N,h}(\cdot)$ defined in \eqref{eq:emp-avg-def} satisfies
\begin{equation}\label{eq:hist-rate}
\mathbb{E}_{\mathcal{D}_N}\left[\left\|y - y_{N,h}\right\|_{L^2(\mu)}^2\right]
\;\lesssim\;
\omega_y(\sqrt{d}\,h)^2
\;+\;
\frac{\|y\|_{L^\infty(I_R)}^2}{N h^d},
\end{equation}
where $\omega_y(t) \coloneqq \sup\left\{ \|y(\bfx)-y(\bfx')\| \colon \bfx,\bfx'\in I_R, \, \|\bfx-\bfx'\|\le t \right\}$ is the modulus of continuity of $y(\cdot)$.
\end{proposition}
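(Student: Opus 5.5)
Our approach is the classical bias--variance decomposition for partition estimators, carried out cell by cell. Since $\mu\in\Pac(I_R)$, the boundaries of the cubes are $\mu$-null, so $\{Q_k\}_{k\in\mathcal K}$ partitions $I_R$ $\mu$-a.e. and we may write
\[
\left\|y-y_{N,h}\right\|_{L^2(\mu)}^2=\sum_{k\in\mathcal K}\int_{Q_k}\|y(\bfx)-\bfS_k\|^2\diff\mu(\bfx).
\]
On each cell with $\mu(Q_k)>0$ we insert the population average $\bfs_k$. Because $\bfs_k$ is the $\mu$-mean of $y$ on $Q_k$, the cross term vanishes, which gives the exact Pythagorean identity
\[
\int_{Q_k}\|y-\bfS_k\|^2\diff\mu=\int_{Q_k}\|y-\bfs_k\|^2\diff\mu+\mu(Q_k)\|\bfs_k-\bfS_k\|^2 .
\]
Cells with $\mu(Q_k)=0$ contribute nothing.

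\textbf{Bias.} For $\bfx\in Q_k$, the vector $\bfs_k$ is an average of values $y(\bfx')$ with $\bfx'\in Q_k$, and $\|\bfx-\bfx'\|\le\diam(Q_k)\le\sqrt d\,h$. Jensen's inequality therefore gives $\|y(\bfx)-\bfs_k\|\le\omega_y(\sqrt d\,h)$. Summing over cells, the first terms total at most $\omega_y(\sqrt d\,h)^2$.

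\textbf{Variance.} We condition on the occupation numbers $N_k\sim\mathrm{Bin}(N,\mu(Q_k))$.

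On $\{N_k=0\}$ we have $\bfS_k=0$, so $\|\bfs_k-\bfS_k\|^2\le\|y\|_{L^\infty}^2$. This event has probability $(1-\mu(Q_k))^N\le e^{-N\mu(Q_k)}$.

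On $\{N_k=n\ge1\}$, conditionally on $N_k=n$, the samples in $Q_k$ are i.i.d. with law $\mu(\cdot\cap Q_k)/\mu(Q_k)$, whose mean is $\bfs_k$. Hence $\mathbb E[\|\bfS_k-\bfs_k\|^2\mid N_k=n]\le \|y\|_{L^\infty}^2/n$.

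Combining the two cases,
\[
\mu(Q_k)\,\mathbb E\|\bfs_k-\bfS_k\|^2\le \|y\|_{L^\infty}^2\,\mu(Q_k)\Big(\mathbb E\big[\tfrac{\1_{N_k\ge1}}{N_k}\big]+e^{-N\mu(Q_k)}\Big).
\]
We then use the standard binomial estimate $\mathbb E[\1_{N_k\ge1}/N_k]\le 2/((N+1)\mu(Q_k))$. It follows from $1/n\le 2/(n+1)$ and the identity $\mathbb E[1/(N_k+1)]=\frac{1-(1-q)^{N+1}}{(N+1)q}$ with $q=\mu(Q_k)$. Together with the elementary bound $\sup_{q\ge 0} q e^{-Nq}\le 1/(eN)$, each cell then contributes at most $\lesssim\|y\|_{L^\infty}^2/N$.

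\textbf{Conclusion.} Summing over the $K_h\lesssim h^{-d}$ cells from \eqref{eq:number.cells} gives the variance term $\|y\|_{L^\infty(I_R)}^2/(Nh^d)$. Adding the bias bound yields \eqref{eq:hist-rate}.

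The only step needing care is the variance of empty or sparsely populated cells. The key point is that the factor $\mu(Q_k)$ exactly cancels the $1/(N\mu(Q_k))$ from the binomial inverse moment, so that no lower bound on the density of $\mu$ is required.
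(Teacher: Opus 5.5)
Your proposal is correct and follows essentially the same route as the paper: a cell-wise bias--variance split, Jensen's inequality with the modulus of continuity for the bias, and conditioning on the binomial occupation numbers together with $1/n\le 2/(n+1)$, $\mathbb P(N_k=0)\le e^{-N\mu(Q_k)}$ and $\sup_{q\ge0} q e^{-Nq}=(eN)^{-1}$ for the variance. There are two cosmetic differences. On each cell you use the exact Pythagorean identity, whereas the paper uses the $2(a+b)$ squared-sum inequality, so you save a factor of $2$. You also evaluate $\mathbb E[1/(N_k+1)]$ in closed form, whereas the paper obtains it through the generating-function integral.
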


\begin{figure}[t!]
    \centering
    \includegraphics[width=\linewidth]{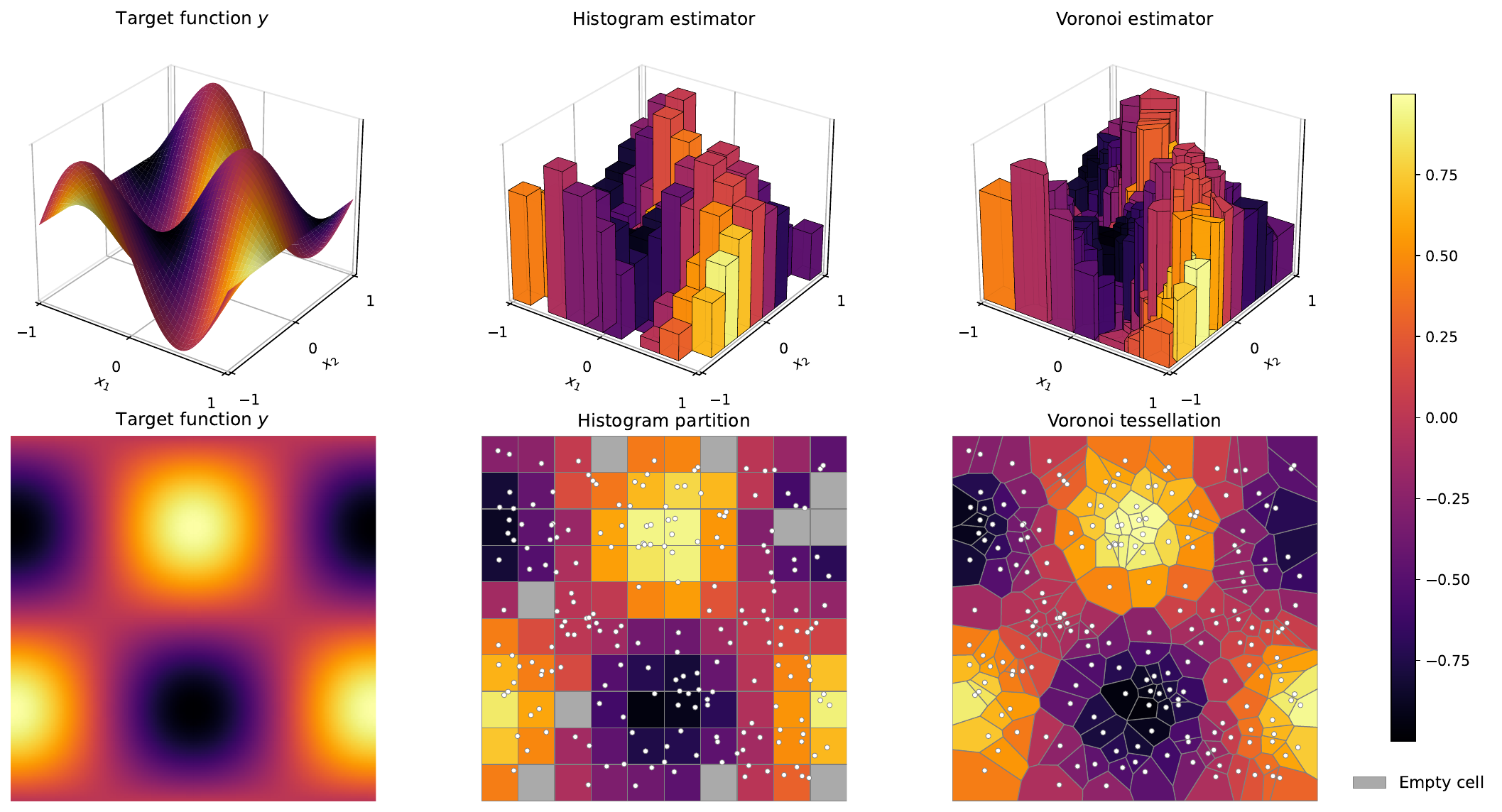}
    \caption{Histogram and Voronoi approximations of a function $y:\R^2\to\R^2$, represented in both 3D (height and color) and 2D (color only). \textbf{Left:} The target function. \textbf{Center:} The histogram estimator, which assigns each cell the average value of its enclosed training points; \textcolor{gray}{gray} cells indicate regions with no training data. \textbf{Right:} The Voronoi estimator, which assigns each cell the value of its nearest training point.}
    \label{fig:main_nonparam}
\end{figure}

The bound \eqref{eq:hist-rate} captures the standard bias-variance trade-off in nonparametric estimation \cite{Gyorfi2002DistributionFreeRegression}. The bias term $\omega_y(\sqrt{d}\,h)^2$ measures the spatial approximation error and vanishes as $h \to 0$. Conversely, the statistical variance scales as $1 / (N h^d)$; as the grid becomes finer, fewer samples fall into each cell. Driving the total error to zero therefore requires the dataset size $N$ to grow strictly faster than the number of cells.

The continuity assumption on $y$ can be relaxed at the cost of losing an explicit algebraic decay rate for the bias. If $y \in L^\infty(I_R; \R^d)$, the bound \eqref{eq:hist-rate} holds with the bias replaced by $\|y - y_h\|_{L^2(\mu)}^2$, which still vanishes as $h \to 0$ (e.g., via standard martingale convergence on nested grids \cite[Theorem~4.4.6]{Durrett2019Probability}). For $y \in L^2(\mu; \R^d)$, the bias similarly vanishes, though extending the $(Nh^d)^{-1}$ variance bound generally requires further assumptions. We do not pursue this level of generality here.

    

Conversely, by imposing stronger regularity on $y(\cdot)$ one can quantify the modulus of continuity. We focus on the Hölder case, characterized by
\begin{equation*}
\omega_y(t) \leq L_y t^\alpha \qquad \text{for some } \alpha \in (0,1] \text{ and } L_y > 0.  
\end{equation*}
Optimizing the grid resolution $h$ yields the following minimal statistical rate.

\begin{corollary}\label{cor:holder}
Suppose $y \in \mathscr{C}^{0,\alpha}(I_R; \R^d)$ for some $\alpha \in (0,1]$. Then, the estimator $y_h$ from \eqref{eq:pop-avg-def} satisfies
\begin{equation}\label{eq:riem-rate-holder}
\|y - y_h\|_{L^2(\mu)}^2 \;\lesssim\; h^{2\alpha}.
\end{equation}
Moreover, $h= (2R) \wedge (N^{-\frac{1}{2\alpha+d}})\eqqcolon h_N$ yields the minimal convergence rate in \eqref{eq:hist-rate},
\begin{equation}\label{eq:hist-rate-holder}
\mathbb{E}_{\mathcal{D}_N}\left[\left\|y - y_{N,h_N}\right\|_{L^2(\mu)}^2\right]
\;\lesssim\;
N^{-\frac{2\alpha}{2\alpha+d}}.
\end{equation}
\end{corollary}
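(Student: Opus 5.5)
For the first claim, \eqref{eq:riem-rate-holder}, I will bound the bias pointwise on each cell, without any probabilistic argument. Fix $k\in\mathcal K$ with $\mu(Q_k)>0$ and $\bfx\in Q_k$. By the definition \eqref{eq:pop-avg-coeffs}, $\bfs_k$ is the $\mu$-average of $y$ over $Q_k$. Jensen's inequality (convexity of $\|\cdot\|$), or simply moving the norm inside the integral, then gives
\[
\|y(\bfx)-\bfs_k\|\le \frac{1}{\mu(Q_k)}\int_{Q_k}\|y(\bfx)-y(\bfx')\|\diff\mu(\bfx')\le \omega_y(\diam Q_k)\le \omega_y(\sqrt d\,h)\le L_y d^{\alpha/2}h^\alpha .
\]
Cells with $\mu(Q_k)=0$ do not contribute to the $L^2(\mu)$ norm. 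The boundary of $I_R$ not covered by the half-open cubes is $\mu$-null, since $\mu\in\Pac$. Squaring and integrating against the probability measure $\mu$ therefore yields $\|y-y_h\|_{L^2(\mu)}^2\le L_y^2 d^\alpha h^{2\alpha}\lesssim h^{2\alpha}$.

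For the second claim, I will insert the Hölder bound $\omega_y(\sqrt d\,h)\le L_y d^{\alpha/2}h^\alpha$ into \Cref{prop:hist-rate}. Since $y$ is continuous on the compact set $I_R$, $\|y\|_{L^\infty(I_R)}$ is finite and absorbed into the implicit constant. This gives
\[
\mathbb E_{\mathcal D_N}\big[\|y-y_{N,h}\|^2_{L^2(\mu)}\big]\lesssim h^{2\alpha}+\frac{1}{Nh^d}.
\]
Balancing the two terms, $h^{2\alpha}=(Nh^d)^{-1}$, gives $h=N^{-1/(2\alpha+d)}$, and then both terms equal $N^{-2\alpha/(2\alpha+d)}$.

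The only technical point is the truncation $h_N=(2R)\wedge N^{-1/(2\alpha+d)}$, which is needed because \Cref{prop:hist-rate} requires $h\le 2R$. I will split into two cases.
\begin{itemize}
\item If $N^{-1/(2\alpha+d)}\le 2R$, then $h_N=N^{-1/(2\alpha+d)}$ and the balanced computation above applies directly.
\item Otherwise $N<(2R)^{-(2\alpha+d)}$, so only finitely many $N$ are involved, bounded in terms of $R,\alpha,d$. With $h_N=2R$ the right-hand side is a constant depending on $R,d,\alpha$. It is therefore $\lesssim N^{-2\alpha/(2\alpha+d)}$, because $N^{-2\alpha/(2\alpha+d)}\ge (2R)^{2\alpha}$ in this regime.
\end{itemize}

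I do not expect a real obstacle. The only care needed is to state explicitly that the implicit constants depend on $L_y,\alpha,d,R$ and $\|y\|_{L^\infty}$, but not on $N$ or $h$.
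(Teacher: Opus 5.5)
Your proposal is correct and follows essentially the same route as the paper: bound the bias cellwise via the Hölder modulus (the paper simply cites the estimate \eqref{eq:bias_analysis} from the proof of \Cref{prop:hist-rate}), then substitute into \eqref{eq:hist-rate} and balance $h^{2\alpha}$ against $(Nh^d)^{-1}$. Your explicit treatment of the truncated regime $h_N=2R$ (only finitely many $N$, where $N^{-2\alpha/(2\alpha+d)}\ge(2R)^{2\alpha}$) is a detail the paper leaves implicit.
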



\begin{remark}[Sample complexity]
The approximation by Riemann sums on a fixed grid of size $h$ yields \eqref{eq:riem-rate-holder}. To ensure a squared error below $\varepsilon > 0$, the number of cells must scale as $K_h \gtrsim \varepsilon^{-\frac{d}{2\alpha}}$. Consequently, the sample complexity to guarantee an expected risk below $\varepsilon$ in \eqref{eq:hist-rate-holder} scales as:
\begin{equation}\label{eq:samp.complexity}
N \;\gtrsim\; \varepsilon^{-\frac{2\alpha+d}{2\alpha}} \;=\; \varepsilon^{-1} \cdot \varepsilon^{-\frac{d}{2\alpha}}.
\end{equation}
We observe that the sample complexity naturally decomposes into the product of the Monte Carlo rate $\varepsilon^{-1}$ (associated with estimating a mean with fixed variance) and a geometric factor $\varepsilon^{-d/(2\alpha)}$. This reflects the curse of dimensionality: whereas numerical integration computes a single global average, function recovery requires estimating $K_h$ distinct local averages simultaneously to resolve the spatial structure of $y(\cdot)$ in $\R^d$.
\end{remark}

\begin{remark}[Minimax optimality]\label{rem:minimax_discussion}
In terms of the number of cells $K_h \asymp h^{-d}$, the bias estimate \eqref{eq:riem-rate-holder} yields $$\|y - y_h\|_{L^2(\mu)} \lesssim K_h^{-\alpha/d}.$$ This matches the optimal rate among all \emph{linear} approximation spaces of dimension $K_h$, as captured by the {Kolmogorov $n$-width}
\[
d_n(\mathcal{F}) \;\coloneqq\; \inf_{\dim(V)=n}\ \sup_{f\in\mathcal{F}}\ \inf_{g\in V}\|f-g\|_{L^2(\mu)}.
\]
For the isotropic $\mathscr{C}^{0,\alpha}$-Hölder ball $\mathcal{F}$, one has $d_n(\mathcal{F}) \asymp n^{-\alpha/d}$ \cite{Pinkus1985NWidths}, an order attained by uniform piecewise-constant grids such as $y_h$.  Moreover, this exponent cannot be improved even by \emph{adaptive} (nonlinear) methods with continuous parameter selection: the corresponding manifold $n$-widths exhibit the exact same decay rate on Besov (and hence Hölder) balls \cite[Eq.~(9.4)]{DeVore1998Nonlinear}. Adaptivity can only yield strictly faster rates on smaller, spatially inhomogeneous target classes \cite{Cohen2001Tree}.
\end{remark}



The rate $N^{-\frac{2\alpha}{2\alpha+d}}$ matches the minimax lower bound for nonparametric regression under additive label noise \cite{Stone1982Optimal}. When $\bfy_i = y(\bfx_i) + \text{noise}$, individual data points are unreliable, making local spatial averaging the optimal strategy to cancel out stochastic fluctuations.

In contrast, our setting is strictly \emph{noiseless} because $\bfy_i = y(\bfx_i)$. Here, spatial averaging is overly conservative and wastes the precision of perfect data. Enforcing \emph{strict interpolation} bypasses the statistical variance penalty, yielding faster rates. For instance, assuming $y\in \mathscr{C}^{k,\alpha}(I_R;\R^d)$ with $k\ge0$ integer and $\alpha \in (0,1]$, spline interpolants $m_N$ achieve \cite{Bauer2017Nonparametric}
\begin{equation}\label{eq:bauer_limit}
\|y - m_N\|_{L^2(\mu)}^2 \;\lesssim\; \|y - m_N\|_{L^\infty(I_R)}^2 \;\lesssim\; \left(\frac{\log N}{N}\right)^{\frac{2(k+\alpha)}{d}}.
\end{equation}
To exploit the lack of noise, we turn to the simplest interpolation scheme: the nearest-neighbor estimator.

\subsection{Nearest-neighbor estimator}\label{ss:voronoi}

We now partition $I_R$ by assigning each point to its nearest neighbor within $\{\bfx_i\}_{i=1}^N$. Unlike the histogram approach, where the grid is fixed and independent of the data, this partition naturally adapts to the local density of $\mathcal{D}_N$: dense regions produce smaller cells, while sparse regions yield larger ones. This adaptivity is exactly what yields faster approximation rates.

Formally, let $\{V_i\}_{i=1}^N$ denote the covering of $I_R$ by the closed Voronoi cells generated by $\{\mathbf{x}_i\}_{i=1}^N$:
\begin{equation}\label{eq:voronoi.cells}
V_i \coloneqq \{\bfx \in I_R : \|\bfx-\bfx_i\| \le \|\bfx-\bfx_j\| \text{ for all } j \ne i\}.    
\end{equation}
For any $\mathbf{x} \in I_R$, we define its nearest neighbor within the dataset by
\begin{equation}\label{eq:nearest_neighbor}
x_{\mathrm{NN}}(\bfx) \coloneqq \argmin_{\bfx_i \in \{\bfx_1, \dots, \bfx_N\}} \|\bfx - \bfx_i\|.
\end{equation}
where any ties on the cell boundaries are broken arbitrarily. Using this mapping, we define the nearest-neighbor interpolant of $y$, denoted by $y_N^V$, as
\begin{equation}\label{eq:voronoi-estimator}
    y_N^V(\bfx) \coloneqq y\left(x_{\mathrm{NN}}(\bfx)\right) = \sum_{i=1}^N y(\bfx_i)\,\1_{V_i}(\bfx).
\end{equation}
A schematic representation of $y_N^V$ for $d=2$ can be seen in the rightmost column of \Cref{fig:main_nonparam}. The accuracy of $y_N^V$ is governed purely by the covering radius of $\mathcal{D}_N$, defined by
\begin{equation}\label{eq:cov.radius}
R_N \coloneqq \sup_{\bfx \in I_R} \min_{i\in[N]} \|\bfx-\bfx_i\| = \sup_{\bfx \in I_R} \|\bfx - x_{\mathrm{NN}}(\bfx)\|.
\end{equation}
The smaller $R_N$, the closer every point in $I_R$ is to some input $\bfx_i$. For i.i.d.\ samples drawn from a density bounded below on $I_R$, the expected scaling of $R_N$ is characterized by \cite[Theorem~2.1]{ReznikovSaff2016CoveringRadius}, which yields
\begin{equation}\label{eq:cov_radius_rate}
\mathbb{E}_{\mathcal{D}_N}[R_N^q] \;\lesssim\; \left( \frac{\log N}{N} \right)^{q/d} \quad \text{for any } q > 0.
\end{equation}
The following proposition uses this rate to quantify the $L^2$-error for Hölder targets:

\begin{proposition}\label{prop:voronoi-rate}
Let $N\ge2$ and $\mathcal{D}_N = \{(\bfx_i, y(\bfx_i))\}_{i=1}^N$ with 
$\bfx_1,\dots,\bfx_N$ drawn i.i.d.\ from $\mu \in \mathcal{P}_{\mathrm{ac}}(I_R)$.
Assume that $\mu$ admits a density $\rho$ with
$\inf_{\bfx\in I_R}\rho(\bfx)>0$, and let
$y\in\mathscr{C}^{0,\alpha}(I_R;\R^d)$ for some $\alpha\in(0,1]$.
Then the nearest-neighbor interpolant $y_N^V$ defined in
\eqref{eq:voronoi-estimator} satisfies
\begin{equation}\label{eq:voronoi-rate}
\mathbb E_{\mathcal D_N}
\Big[
\|y-y_N^V\|_{L^2(\mu)}^2
\Big]
\lesssim
\left(\frac{\log N}{N}\right)^{\frac{2\alpha}{d}} .
\end{equation}
\end{proposition}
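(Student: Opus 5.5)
The plan is a pointwise reduction to the covering radius \eqref{eq:cov.radius}, followed by an application of the moment bound \eqref{eq:cov_radius_rate}. Fix a realization of the dataset. For every $\bfx\in I_R$ outside the boundaries between Voronoi cells (a $\mu$-null set, since $\mu$ is absolutely continuous and these boundaries lie in finitely many hyperplanes), we have $y_N^V(\bfx)=y(x_{\mathrm{NN}}(\bfx))$. The Hölder assumption then gives
\[
\|y(\bfx)-y_N^V(\bfx)\|\le L_y\,\|\bfx-x_{\mathrm{NN}}(\bfx)\|^\alpha\le L_y R_N^\alpha .
\]
Squaring and integrating against the probability measure $\mu$ yields the deterministic bound
\[
\|y-y_N^V\|_{L^2(\mu)}^2\le L_y^2 R_N^{2\alpha}.
\]
Ties broken arbitrarily on cell boundaries are harmless: every tie choice is still a nearest neighbor, so the pointwise bound holds everywhere anyway.

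Next, take the expectation over $\mathcal D_N$ and apply \eqref{eq:cov_radius_rate} with $q=2\alpha>0$. This gives $\mathbb E[R_N^{2\alpha}]\lesssim (\log N/N)^{2\alpha/d}$, which is exactly \eqref{eq:voronoi-rate}, with implicit constant depending on $L_y$, $d$, $R$, $\alpha$ and $\inf\rho$. The condition $N\ge2$ ensures $\log N>0$, so the right-hand side is nondegenerate.

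The only point needing care is whether \eqref{eq:cov_radius_rate} applies as stated. The cited result of Reznikov--Saff concerns i.i.d.\ samples on a suitably regular set (here the cube $I_R$, which is $d$-regular) from a measure with density bounded below. Our hypothesis $\inf\rho>0$ is exactly what is needed. Concretely, for $r\ge (\log N/N)^{1/d}$ the balls of radius $r$ centred in $I_R$ intersected with $I_R$ have $\mu$-mass $\gtrsim r^d$. A covering-number and union-bound argument then gives
\[
\mathbb P(R_N>r)\lesssim r^{-d}e^{-cNr^d}.
\]
Integrating this tail via $\mathbb E[R_N^q]=\int_0^{\infty} q r^{q-1}\mathbb P(R_N>r)\diff r$ recovers the moment bound for every $q>0$. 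If the citation needs justification, this tail-integration step is the only genuinely probabilistic part. Since $R_N\le\sqrt d\,2R$ is bounded, the integral over large $r$ causes no difficulty.
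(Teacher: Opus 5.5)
Your proposal is correct and follows the paper's proof essentially verbatim: the H\"older bound reduces the error to $L_y^2R_N^{2\alpha}$, and the covering-radius moment bound \eqref{eq:cov_radius_rate} with $q=2\alpha$ then gives the rate, using that $\inf\rho>0$ implies $\mu(B(\bfx,r)\cap I_R)\gtrsim r^d$. Your extra union-bound and tail-integration sketch is a sound self-contained substitute for the Reznikov--Saff citation, provided you cut the integral at $r_N=(C'\log N/N)^{1/d}$ with $C'$ large enough that the tail contribution is $O(N^{-q/d})$.
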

Note that the rate in \eqref{eq:voronoi-rate} is strictly faster than the corresponding rate of $O\left(N^{-\frac{2\alpha}{2\alpha+d}}\right)$ derived in \eqref{eq:hist-rate-holder}.



\section{Generalization via simultaneous cell controllability}
\label{s:generalization}

The previous two sections provide the ingredients needed for our subsequent analysis. On the one hand, \Cref{s:expressivity} shows that \eqref{eq:saNODE} can map prescribed convex cells into small target balls. On the other hand, \Cref{s:nonparam} provides nonparametric estimators with explicit statistical risks. We now combine both ingredients to derive generalization bounds.


\subsection{General bounds}

To derive quantitative rates, we assume that $\bfx_1,\dots,\bfx_N$ are drawn i.i.d.\ from $\mu$.\footnote{The deterministic core of the argument below, however, is purely geometric and applies to any fixed dataset.} This induces the expected population risk over all realizations of the training sample:
\begin{equation}\label{eq:expected_risk}
\mathbb E_{\mathcal D_N}\left[\mathcal R(\theta_N)\right]
\coloneqq
\int_{I_R}\cdots\int_{I_R}\mathcal R\left(\theta_N(\mathcal D_N)\right)\diff\mu(\bfx_1)\cdots\diff\mu(\bfx_N).
\end{equation}
Our approach proceeds in two steps. First, we construct from $\mathcal D_N$ a piecewise-constant proxy $y_N(\cdot)$ that estimates $y(\cdot)$. Second, we design a control $\theta_N$ such that $\Phi_T^{\theta_N}$ approximates $y_N$ on most of the domain.  

Let $\{P_k\}_{k=1}^K$ be a partition of $I_R$ into convex cells, let $\{\bfr_k\}_{k=1}^K\subset\R^d$ be target values computed from $\mathcal D_N$ (e.g., via cell averages), and define the piecewise-constant estimator $$y_N(\bfx)\coloneqq \sum_{k=1}^K \bfr_k\,\mathbf 1_{P_k}(\bfx).$$ Because $\Phi_T^\theta$ is a homeomorphism, it cannot uniformly approximate the jump discontinuities of $y_N$. We therefore restrict the accuracy requirements to regions safely away from the cell boundaries. For a margin $\delta > 0$, we define the trimmed cores $P_k^\delta$ and the boundary layer $\Omega_\delta$ (illustrated in \Cref{fig:trimmed}):
\begin{equation}\label{eq:trimmed-cores}
P_k^\delta \coloneqq \left\{ \bfx \in P_k \colon \dist(\bfx, \partial P_k) \geq \delta \right\},
\qquad
\Omega_\delta \coloneqq I_R \setminus \bigcup_{k=1}^K P_k^\delta.   
\end{equation}

\begin{figure}[t]
    \centering
    \includegraphics[width=\linewidth]{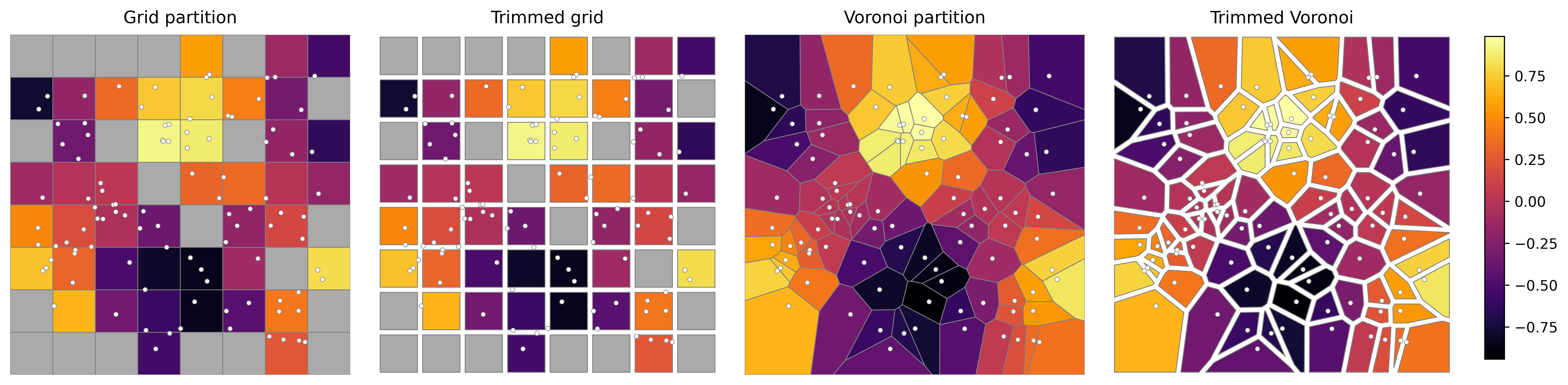}
\caption{Comparison of grid and Voronoi partitions on the same dataset of $N=100$ training points. \textbf{Left to right:} grid partition, trimmed grid partition, Voronoi partition, and trimmed Voronoi partition. In the trimmed cases, each cell is eroded inward by a margin $\delta>0$, meaning points within distance $\delta$ of the boundary are removed to produce disjoint compact cores. The white regions indicate these removed boundary layers.}
    \label{fig:trimmed}
\end{figure}
Since $y_N \equiv \bfr_k$ on each core $P_k^\delta$, any control $\theta$ satisfying condition \eqref{eq:cell-wise-eq} up to tolerance $\eta$ on these cores yields a natural decomposition of the error. This leads to the following model-agnostic bound.

\begin{theorem}[Template bound]\label{thm:template_bound}
Assume $d \geq 2$ and that the flow $\Phi_T^\theta$ of \eqref{eq:gen_model} is cell-wise controllable in the sense of \Cref{def:cell_ctrl}. Let $\mathcal{D}_N = \{(\bfx_i, y(\bfx_i))\}_{i=1}^N$ with $\bfx_1,\dots,\bfx_N$ drawn i.i.d.\ from $\mu\in\Pac(I_R)$, let $y \in \mathscr{C}^0(I_R; \R^d)$, and let $y_N$ be a piecewise-constant estimator on $\{P_k\}_{k=1}^K$. Then, for any margin $\delta > 0$ and tolerance $\eta > 0$, there exists a data-dependent control $\theta_N = \theta_N(\mathcal{D}_N)$ such that
\begin{equation}\label{eq:template_bound}
\mathbb{E}_{\mathcal{D}_N} \left[ \mathcal{R}(\theta_N) \right]
\;\lesssim\;
\mathbb{E}_{\mathcal{D}_N} \left[ \left\| y - y_N \right\|_{L^2(\mu)}^2 \right]
\;+\; \mathbb{E}_{\mathcal{D}_N} \left[ \mu(\Omega_\delta) \right]
\;+\; \eta^2,
\end{equation}
provided both $y_N$ and $\Phi_T^{\theta_N}$ remain uniformly bounded on $I_R$.
\end{theorem}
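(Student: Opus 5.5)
The argument is deterministic for each fixed realization of $\mathcal D_N$; taking expectations at the end gives \eqref{eq:template_bound}.

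\textbf{Step 1: construct the control.} Fix $\mathcal D_N$, the margin $\delta>0$ and the tolerance $\eta>0$. The trimmed cores $P_k^\delta$ from \eqref{eq:trimmed-cores} are compact and convex. Convexity holds because $P_k^\delta$ is the set of points of the convex set $P_k$ at distance at least $\delta$ from its boundary, i.e. an inner parallel body. The cores are pairwise disjoint because they sit inside disjoint cells. Any empty core is discarded. We then apply \Cref{def:cell_ctrl} to the family $\{P_k^\delta\}_k$ with targets $\bfr_k$ and tolerance $\eta$. This yields a control $\theta_N=\theta_N(\mathcal D_N)$ with $\Phi_T^{\theta_N}(P_k^\delta)\subset B(\bfr_k,\eta)$ for every $k$. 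The only nontrivial point is measurability of $\mathcal D_N\mapsto\theta_N$, needed for the expectation in \eqref{eq:expected_risk} to make sense. I would handle it by a measurable selection argument, or simply by assuming it implicitly, as the statement does.

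\textbf{Step 2: split the risk.} Write $\Phi=\Phi_T^{\theta_N}$. Using $\|a+b\|^2\le 2\|a\|^2+2\|b\|^2$,
\[
\mathcal R(\theta_N)\le 2\|y-y_N\|_{L^2(\mu)}^2+2\int_{I_R}\|\Phi(\bfx)-y_N(\bfx)\|^2\diff\mu(\bfx).
\]
We split the second integral over $\bigcup_k P_k^\delta$ and over $\Omega_\delta$.
\begin{itemize}
\item On $P_k^\delta$ we have $y_N\equiv\bfr_k$ and $\|\Phi-\bfr_k\|<\eta$, so this part contributes at most $\eta^2$.
\item On $\Omega_\delta$ we use the standing uniform bounds $\|y_N\|_{L^\infty(I_R)},\|\Phi\|_{L^\infty(I_R)}\le M$, where $M$ does not depend on $\mathcal D_N$. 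For \eqref{eq:saNODE} this is the constant $\mathfrak M$ of \Cref{thm:cell_ctrl}. This part contributes at most $4M^2\mu(\Omega_\delta)$.
\end{itemize}
Since $\mu\in\Pac$, the boundaries of the cells are $\mu$-null, so the cores together with $\Omega_\delta$ cover $I_R$ up to a null set.

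\textbf{Step 3: take expectations.} Integrating the pointwise bound over $\mathcal D_N$ gives \eqref{eq:template_bound}, with implicit constant $\max\{2,8M^2\}$.

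\textbf{Main obstacle.} The genuinely delicate point is the uniform boundedness hypothesis: the constant $M$ must be independent of the sample. This is exactly what the "provided" clause in the statement assumes. For \eqref{eq:saNODE} it is supplied by item 2 of \Cref{thm:cell_ctrl}, since $\mathfrak M$ depends only on $d,T,R$ and the targets, and the targets are bounded by $\|y\|_{L^\infty}$. A secondary issue is that some $P_k^\delta$ may be empty when $\delta$ is large relative to the cell size. Such cores are simply dropped, and the corresponding region is absorbed into $\Omega_\delta$.
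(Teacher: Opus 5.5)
Your proof is correct and follows the paper's argument: you apply simultaneous cell controllability to the trimmed cores, split the risk via $\|a+b\|^2\le 2\|a\|^2+2\|b\|^2$, bound the core contribution by $\eta^2$ and the boundary-layer contribution by $\mu(\Omega_\delta)$ times the uniform sup bounds on $y_N$ and $\Phi_T^{\theta_N}$, and then take expectations. Your remarks on compactness and convexity of the cores, on empty cores, and on measurability of $\mathcal D_N\mapsto\theta_N$ fill in points the paper leaves implicit; the null-boundary remark is unnecessary, since the cores together with $\Omega_\delta$ cover $I_R$ exactly by definition.
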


\begin{proof}
By the triangle inequality, for every control $\theta$ we have
$$\left\| \Phi_T^\theta - y \right\|_{L^2(\mu)}^2 \leq 2 \left\| y - y_N \right\|_{L^2(\mu)}^2 + 2 \left\| y_N - \Phi_T^\theta \right\|_{L^2(\mu)}^2.$$ To bound the realization error, we split the domain into the trimmed cores and the boundary layer:
\begin{align*}
\left\| y_N - \Phi_T^\theta \right\|_{L^2(\mu)}^2
&\leq \sum_{k=1}^K \mu(P_k^\delta) \sup_{\bfx\in P_k^\delta} \left\| \Phi_T^\theta(\bfx) - \bfr_k \right\|^2 
+ \int_{\Omega_\delta} \left\| y_N(\bfx) - \Phi_T^\theta(\bfx) \right\|^2 \diff\mu(\bfx) \\
&\leq \eta^2 
+ 2\mu(\Omega_\delta) \left( \|y_N\|_{L^\infty(I_R)}^2 + \|\Phi_T^\theta\|_{L^\infty(I_R)}^2 \right),
\end{align*}
where the first term exploits the condition of \Cref{def:cell_ctrl}. By the uniform boundedness of both the estimator and the flow, taking the expectation over $\mathcal{D}_N$ yields the result.
\end{proof}

The template bound \eqref{eq:template_bound} cleanly isolates the three sources of error. The statistical error depends entirely on the chosen proxy $y_N$, the control error is dictated by the network capacity ($\eta \to 0$ as width grows), and the geometric error depends on the volume of the boundary layer $\Omega_\delta$. We treat the two proxy partitions of interest separately.

\begin{itemize}[leftmargin=2em]
    \item \textbf{Uniform grid.} The boundary layer $\Omega_\delta$ is simply a union of $\delta$-strips along the fixed cubic cell faces.
    
    \item \textbf{Voronoi.} The boundary layer $\Omega_\delta$ is the $\delta$-neighborhood of the Voronoi skeleton $\Sigma_N \coloneqq \bigcup_{i=1}^N \partial V_i$. Because the cells $V_i$ are intersections of half-spaces, they are convex polytopes, ensuring that the trimmed cores $V_i^\delta$ are pairwise disjoint compact convex sets. This allows \Cref{def:cell_ctrl} to be applied directly, even though $\Sigma_N$ depends on the random spatial configuration of the sample points.
\end{itemize}

The next two lemmas establish quantitative bounds on the expected measure of $\Omega_\delta$ for each type.

\begin{lemma}\label{lem:grid-layer}
For $\mu \in \Pac(I_R)$ with density bounded above, any partition $\{Q_k\}$ as in \eqref{eq:cubes} with $h \in (0, 2R]$, and any margin $\delta \in (0, h/2)$, the boundary layer $\Omega_\delta$ satisfies
\begin{equation}\label{eq:grid_boundary}
\mu(\Omega_\delta) \;\lesssim\; \frac{\delta}{h}.
\end{equation}
\end{lemma}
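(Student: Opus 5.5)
The plan is to bound $\Omega_\delta$ by a union of thin slabs around the grid hyperplanes, then integrate against the bounded density. Let $\rho\le \rho_{\max}$ be the density of $\mu$. Every cube $Q_k$ from \eqref{eq:cubes} is a product of intervals $[-R+k_jh,-R+(k_j+1)h)\cap[-R,R]$. Its closure is a box, and $\dist(\bfx,\partial Q_k)$ is the minimum over the coordinates $j$ of the distance from $x_j$ to the two endpoints of the $j$-th interval. Hence, if $\bfx\in Q_k\setminus Q_k^\delta$, there is a coordinate $j$ such that $x_j$ lies within $\delta$ of a grid value $-R+mh$ or of the endpoint $R$. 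Therefore
\[
\Omega_\delta\subset\bigcup_{j=1}^d S_j,\qquad S_j\coloneqq\Big\{\bfx\in I_R:\ \dist\big(x_j,\ \{-R+mh\}_{m=0}^{M}\cup\{R\}\big)<\delta\Big\},\qquad M\coloneqq\lceil 2R/h\rceil .
\]
Each $S_j$ is a union of at most $M+2$ slabs of the form $\{|x_j-c|<\delta\}\cap I_R$. Each slab has Lebesgue measure at most $2\delta(2R)^{d-1}$.

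Since $h\le 2R$, we have $M\le 2R/h+1\le 4R/h$. Summing over the slabs and over $j$ gives
\[
\mu(\Omega_\delta)\le\rho_{\max}\,|\Omega_\delta|\le \rho_{\max}\,d\,(M+2)\,2\delta(2R)^{d-1}\lesssim \rho_{\max}\,d\,R^{d}\,\frac{\delta}{h}.
\]
The last step uses $M+2\le 4R/h+2\le 6R/h$, which again relies on $h\le 2R$. All constants depend only on $d$, $R$ and $\rho_{\max}$, which is the claimed bound \eqref{eq:grid_boundary}. The hypothesis $\delta<h/2$ only ensures that each core is nonempty, so the statement is meaningful; the inequality itself does not need it.

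The only step needing care is the last cell in each direction, which is truncated by $I_R$ when $2R/h\notin\N$ and can be arbitrarily thin. Its boundary then includes the face $x_j=R$. That is why I add the extra value $R$ to the list of grid points. A thin cell may lie entirely in $\Omega_\delta$, but it is still contained in the slab around $R$, so the counting argument is unaffected. Apart from this bookkeeping, the argument is a direct volume count.
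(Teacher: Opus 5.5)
Your proof is correct, but it organizes the volume count differently from the paper. The paper works cell by cell: a cube of side $h$ loses a rim of volume $h^d-(h-2\delta)^d\lesssim h^{d-1}\delta$, and summing over the $K_h\lesssim h^{-d}$ cells gives $|\Omega_\delta|\lesssim \delta/h$; the density bound then finishes. You instead cover $\Omega_\delta$ globally by $d(M+2)$ slabs of width $2\delta$ around the grid hyperplanes and the faces $\{x_j=R\}$. Each slab has volume at most $2\delta(2R)^{d-1}$, so your factor $1/h$ comes from the number of hyperplanes per direction rather than from $K_h\cdot h^{d-1}$. The paper's route is shorter. However, its rim formula is written for full cubes of side $h$, and it tacitly needs the easy observation that a truncated box with sides $\ell_j\le h$ still loses at most $2d\delta h^{d-1}$. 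Your slab covering needs no such case analysis. It handles the truncated last cells when $2R/h\notin\mathbb N$. It also handles the $\mu$-null face $\{x_j=R\}$, which belongs to no $Q_k$ when $2R/h\in\mathbb N$; your inclusion argument only treats $\bfx\in Q_k\setminus Q_k^\delta$, but such face points trivially lie in $S_j$. There is one arithmetic slip: $4R/h+2\le 6R/h$ requires $h\le R$. For $h\in(R,2R]$, use $2\le 4R/h$ instead, which gives $M+2\le 8R/h$ and only changes the constant. Also, your aside that $\delta<h/2$ makes every core nonempty is false for thin truncated cells, but nothing in the argument depends on it.
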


\begin{lemma}\label{lem:voronoi-layer}
For any integers $d \ge 2$ and $N\ge1$, any $\mu \in \Pac(I_R)$ with density bounded above and below by positive constants, and any sufficiently small $\delta > 0$, the boundary layer $\Omega_\delta$ generated by $\{\bfx_i\}_{i=1}^N$ satisfies
\begin{equation}\label{eq:layer-explicit}
\mathbb{E}_{\mathcal{D}_N} \left[ \mu(\Omega_\delta) \right] \;\lesssim\; \delta N^{1/d}.
\end{equation}
\end{lemma}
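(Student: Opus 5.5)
The plan is to bound $\mu(\Omega_\delta)$ by $\|\rho\|_\infty$ times the Lebesgue measure of the $\delta$-neighbourhood of the Voronoi skeleton, and then bound that volume by $\delta$ times the total $(d-1)$-dimensional surface area of the Voronoi cells. Two observations make this work.

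\textbf{Step 1 (trimming as a boundary tube).} If $\bfx\in\Omega_\delta$, then $\bfx$ lies in some $V_i$ with $\dist(\bfx,\partial V_i)<\delta$. Here $\partial V_i$ includes the facets of $V_i$ lying on $\partial I_R$; if the paper's convention excludes the outer cube faces, that only shrinks the set. Hence
\[
\Omega_\delta\subset\bigcup_i\{\bfx\in V_i:\dist(\bfx,\partial V_i)<\delta\},
\]
and so
\[
\mu(\Omega_\delta)\le\|\rho\|_\infty\sum_{i=1}^N|\{\bfx\in V_i:\dist(\bfx,\partial V_i)<\delta\}|.
\]

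\textbf{Step 2 (inner tubes of convex sets).} For a convex body $V$, the inner parallel set satisfies
\[
|\{\bfx\in V:\dist(\bfx,\partial V)<\delta\}|\le\delta\,\mathcal H^{d-1}(\partial V).
\]
This follows from the coarea formula: the distance function to $\partial V$ is $1$-Lipschitz with unit gradient a.e., and the level sets $\{\dist=t\}$ are boundaries of convex bodies contained in $V$. By monotonicity of surface area under inclusion of convex bodies, each level set has area at most $\mathcal H^{d-1}(\partial V)$. This is where convexity of the Voronoi cells is essential.

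\textbf{Step 3 (expected surface area).} It then remains to show
\[
\mathbb E\Big[\sum_i\mathcal H^{d-1}(\partial V_i)\Big]\lesssim N^{1/d}.
\]
I expect this to be the main obstacle. My first approach is the following. Each interior facet is counted twice, and the outer faces contribute at most $2d(2R)^{d-1}$ in total, a constant. For interior facets, write
\[
\mathcal H^{d-1}(\partial V_i)\lesssim\diam(V_i)^{d-1}\le(2R_N)^{d-1}
\]
per cell. Multiplying by $N$ and using \eqref{eq:cov_radius_rate} gives $N(\log N/N)^{(d-1)/d}=N^{1/d}(\log N)^{(d-1)/d}$, which is off by a log factor.

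To remove the log, I would instead use a Mecke/Slivnyak-type computation under the density bounds. By exchangeability,
\[
\mathbb E\Big[\sum_i\mathcal H^{d-1}(\partial V_i)\Big]=N\,\mathbb E\big[\mathcal H^{d-1}(\partial V_1)\big],
\]
and $\mathcal H^{d-1}(\partial V_1)\lesssim\diam(V_1)^{d-1}$. The local cell diameter satisfies a tail bound
\[
\mathbb P\big(\diam(V_1)>t\big)\lesssim\exp(-cNt^d)
\]
up to boundary effects. The standard argument is that if $V_1$ reaches distance $t/2$ from $\bfx_1$, then one of finitely many cones around $\bfx_1$, of volume $\asymp t^d$, contains no other sample. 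That event has probability $\le(1-c\,t^d)^{N-1}$ because $\rho$ is bounded below, and the cone being inside $I_R$ requires only local convexity of the cube, which holds up to constants. Integrating the tail gives
\[
\mathbb E\big[\diam(V_1)^{d-1}\big]\lesssim N^{-(d-1)/d},
\]
hence the total is $\lesssim N\cdot N^{-(d-1)/d}=N^{1/d}$.

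Combining the three steps yields \eqref{eq:layer-explicit}. The restriction to sufficiently small $\delta$ only ensures that the trimmed cores are nonempty and that the constants are uniform.
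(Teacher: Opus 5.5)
Your argument is correct in substance, but it takes a genuinely different route from the paper's.

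The paper never estimates the surface area of the tessellation. It draws an independent test point $\bfx\sim\mu$ and writes $\mathbb E[\mu(\Omega_\delta)]=\mathbb P(\bfx\in\Omega_\delta)$ by Fubini. It then shows that, away from the $\delta$-strip along $\partial I_R$, the event $\bfx\in\Omega_\delta$ forces $D_2(\bfx)-D_1(\bfx)\le 2\delta$ for the first two nearest-neighbour distances. This probability is bounded by a one-dimensional order-statistics computation (\Cref{lem:gap-small}), ending with $N\delta\,(d-1)\int_0^\infty r^{d-2}(1-F_{\bfx}(r))^{N-1}\diff r\lesssim\delta N^{1/d}$.

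You instead split the argument in two:
\begin{itemize}
\item a deterministic convex-geometric inequality, $\mu(\Omega_\delta)\le\|\rho\|_\infty\,\delta\sum_i\mathcal H^{d-1}(\partial V_i)$;
\item a probabilistic bound on the expected total facet area, $N\,\mathbb E[\diam(V_1)^{d-1}]\lesssim N^{1/d}$.
\end{itemize}
The two computations are close cousins: the paper's integral is essentially $\mathbb E[D_1(\bfx)^{d-1}]$, yours is $\mathbb E[\diam(V_1)^{d-1}]$.

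Your split gives a pathwise bound valid for every $\delta>0$ and every convex partition. It reproves \Cref{lem:grid-layer} immediately, since the grid has total facet area $\asymp h^{-1}$, and it absorbs the outer faces without a separate strip estimate. The paper's route is more economical. It only needs $\mu(B(\bfx,r))\gtrsim r^d$ at a single point $\bfx$ that is independent of the sample, so it uses neither convexity nor a union bound.

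That last point is where your sketch needs repair. The tail bound for $\diam(V_1)$ concerns a random point $\bfz\in V_1$, and the fixed-cone argument fails as stated near $\partial I_R$. If $\bfx_1$ is close to a corner, a fixed cone containing the direction of $\bfz-\bfx_1$ can meet $I_R$ in a set of negligible, even zero, volume. This happens, for example, when that direction runs along an edge of the cube and lies on the boundary of the cone.

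A clean fix goes as follows.
\begin{itemize}
\item By convexity of $V_1\ni\bfx_1$, the event $\diam(V_1)>2s$ yields some $\bfz\in V_1$ with $\|\bfz-\bfx_1\|=s$.
\item The open ball $B(\bfz,s)$ contains no other sample, and it contains $B(\mathbf m,s/2)$, where $\mathbf m\in I_R$ is the midpoint of $[\bfx_1,\bfz]$.
\item The cube satisfies $|B(\mathbf m,r)\cap I_R|\ge 2^{-d}|B(\mathbf m,r)|$ for every $\mathbf m\in I_R$ and $r\le R$.
\item Take a net of $B(\bfx_1,s/2)$ of mesh $s/8$; it has $C_d$ points independently of $s$. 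For a net point $\mathbf g$ within $s/8$ of $\mathbf m$, the ball $B(\mathbf g,3s/8)$ lies inside $B(\mathbf m,s/2)$, so it is sample-free, and it contains $B(\mathbf m,s/4)$, so it has $\mu$-mass $\gtrsim s^d$.
\item A union bound over these balls gives $\mathbb P(\diam(V_1)>2s\mid\bfx_1)\le C_d(1-cs^d)^{N-1}$ for $s\lesssim R$.
\end{itemize}
This integrates to $\mathbb E[\diam(V_1)^{d-1}]\lesssim N^{-(d-1)/d}$, as you intended.
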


Substituting these geometric bounds and the statistical risks from \Cref{s:nonparam} into the decomposition \eqref{eq:template_bound} yields explicit rates for any system possessing the \SCC property.

\begin{corollary}\label{cor:rates}
Under the hypotheses of \Cref{thm:template_bound}, suppose that $y \in \mathscr{C}^{0,\alpha}(I_R;\R^d)$ for some $\alpha \in (0,1]$. Furthermore, for the Voronoi estimator, assume that $\mu$ admits a strictly positive density on $I_R$. Then,
\begin{equation}\label{eq:non-param}
\mathbb{E}_{\mathcal{D}_N} \left[ \mathcal{R}(\theta_N) \right]
\;\lesssim\;
\begin{cases}
h^{2\alpha} + \dfrac{1}{N h^d} + \dfrac{\delta}{h} + \eta^2
& \qquad \text{\normalfont (Histogram)}, \\[10pt]
\left(\dfrac{\log N}{N}\right)^{2\alpha/d} + \delta N^{1/d} + \eta^2
& \qquad \text{\normalfont (Voronoi)}.
\end{cases}
\end{equation}
\end{corollary}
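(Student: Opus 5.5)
This is a direct substitution into the template bound \eqref{eq:template_bound} of \Cref{thm:template_bound}, treating the two partitions separately. In each case we bound the three terms: the statistical error $\mathbb E\|y-y_N\|^2_{L^2(\mu)}$, the boundary-layer mass $\mathbb E[\mu(\Omega_\delta)]$, and the control tolerance $\eta^2$. The last is carried over unchanged.

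\emph{Histogram case.} Take $\{P_k\}=\{Q_k\}$ from \eqref{eq:cubes} and $y_N=y_{N,h}$ from \eqref{eq:emp-avg-def}, so $\bfr_k=\bfS_k$.
\begin{itemize}
\item \textbf{Statistical term.} Apply \Cref{prop:hist-rate}. For Hölder $y$ we have $\omega_y(\sqrt d\,h)\le L_y d^{\alpha/2}h^\alpha$, so the term is at most $h^{2\alpha}+\|y\|_{L^\infty}^2/(Nh^d)\lesssim h^{2\alpha}+(Nh^d)^{-1}$. Here $\|y\|_{L^\infty(I_R)}<\infty$ because $y$ is continuous on the compact set $I_R$.
\item \textbf{Boundary term.} \Cref{lem:grid-layer} gives $\mu(\Omega_\delta)\lesssim\delta/h$ for $\delta<h/2$. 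This bound is deterministic, so taking the expectation is trivial.
\item \textbf{Case $\delta\ge h/2$.} The bound in \eqref{eq:non-param} holds trivially, since $\mu(\Omega_\delta)\le1\le 2\delta/h$.
\end{itemize}
This lemma implicitly needs the density of $\mu$ to be bounded above. I would read that as part of the standing hypotheses, in the same way as the strict positivity assumed for the Voronoi case.

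\emph{Voronoi case.} Take $P_k=V_k$ and $\bfr_k=y(\bfx_k)$, so $y_N=y_N^V$.
\begin{itemize}
\item \textbf{Statistical term.} \Cref{prop:voronoi-rate} gives the bound $(\log N/N)^{2\alpha/d}$. This uses that the density is bounded below.
\item \textbf{Boundary term.} \Cref{lem:voronoi-layer} gives $\mathbb E[\mu(\Omega_\delta)]\lesssim\delta N^{1/d}$ for small $\delta$. For larger $\delta$, the trivial bound $\mu(\Omega_\delta)\le1$ is absorbed after adjusting constants, exactly as in the histogram case.
\end{itemize}

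\emph{Remaining hypothesis.} The template bound requires $y_N$ and $\Phi_T^{\theta_N}$ to be uniformly bounded on $I_R$, uniformly over the sample.
\begin{itemize}
\item \textbf{Bound on $y_N$.} Both estimators take values in averages of values of $y$, or in $\{0\}$. Hence $\|y_N\|_{L^\infty}\le\|y\|_{L^\infty(I_R)}$ for every realization.
\item \textbf{Bound on the flow.} Since the $\bfr_k$ then lie in a fixed ball, I would invoke \SCC in a form that also controls $\|\Phi_T^{\theta_N}\|_{L^\infty(I_R)}$, as item 2 of \Cref{thm:cell_ctrl} does for \eqref{eq:saNODE}. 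There the bound $\mathfrak M$ depends only on $d,T,R$ and $\max_k\|\bfr_k\|$, so it is uniform in $\mathcal D_N$.
\end{itemize}
Then the constant $2(\|y_N\|_\infty^2+\|\Phi\|_\infty^2)$ in the template proof is a fixed number, and the expectation passes through.

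\emph{Main obstacle.} The real content is entirely in the earlier results, so the only genuine subtlety is this uniformity. Once the constants are confirmed independent of $\mathcal D_N$, summing the three bounds in each case yields \eqref{eq:non-param}.
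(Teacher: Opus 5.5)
Your proposal is correct and follows the paper's proof. The paper substitutes \Cref{prop:hist-rate} (with the H\"older modulus, as in \Cref{cor:holder}) and \Cref{lem:grid-layer}, respectively \Cref{prop:voronoi-rate} and \Cref{lem:voronoi-layer}, into the template bound \eqref{eq:template_bound}; your extra checks are details the paper leaves implicit, and you handle them correctly: the trivial regime $\delta\ge h/2$, the needed upper density bound, and the sample-uniform $L^\infty$ bounds on $y_N$ and $\Phi_T^{\theta_N}$ via item~2 of \Cref{thm:cell_ctrl}.
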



\subsection{Nonparametric rates with semi-autonomous neural ODEs}

We now specialize these bounds to \eqref{eq:saNODE}. By \Cref{thm:cell_ctrl}, this model satisfies the \SCC and admits realizing flows that remain uniformly bounded on $I_R$. To match the baseline rates, it suffices to balance the statistical, geometric, and control errors in \eqref{eq:template_bound} by appropriately scaling the margin $\delta$, the tolerance $\eta$, and the width $p_N$. We first state the result for the histogram partition.

\begin{proposition}[Histogram rate]\label{prop:sanode-rate}
Let $d \ge 2$, let $y \in \mathscr{C}^{0,\alpha}(I_R;\R^d)$ for some $\alpha \in (0,1]$, and let $\bfx_1,\dots,\bfx_N$ be drawn i.i.d.\ from $\mu \in \mathcal{P}_{\mathrm{ac}}(I_R)$ with bounded density. Define
\begin{equation}\label{eq:param-choices}
h_N \coloneqq N^{-\frac{1}{2\alpha+d}}, \qquad
\delta_N \coloneqq h_N^{1+2\alpha},
\end{equation}
and let $p_N$ satisfy condition \eqref{eq:p_cell_ctrl_thm} for the histogram partition with cells $\{Q_k\}$ of side $h_N$ and margin $\delta_N$. Then, for all sufficiently large $N$, there exists a data-dependent control $\theta_N$ of width $p_N$ for \eqref{eq:saNODE} such that
\[
\mathbb{E}_{\mathcal{D}_N}\left[\mathcal{R}(\theta_N)\right]
\;\lesssim\;
N^{-\frac{2\alpha}{2\alpha+d}}.
\]
\end{proposition}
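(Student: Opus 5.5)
We combine the template bound (\Cref{thm:template_bound}) with \Cref{thm:cell_ctrl}, \Cref{prop:hist-rate} and \Cref{lem:grid-layer}, and balance the three error sources through the choices \eqref{eq:param-choices} together with a suitable tolerance $\eta_N$.

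\textbf{Step 1: construction of the control.} Fix $N$ and the grid $\{Q_k\}_{k\in\mathcal K}$ of side $h_N$; for $N$ large, $h_N\le 2R$ and $\delta_N<h_N/2$. The trimmed cores $Q_k^{\delta_N}$ are closed cubes of side $h_N-2\delta_N$. They are pairwise disjoint, compact and convex, and their mutual separation is $s_*\ge 2\delta_N$. The targets are the empirical averages $\bfr_k\coloneqq\bfS_k$ from \eqref{eq:emp-avg-coeffs}, which satisfy $\|\bfS_k\|\le\|y\|_{L^\infty(I_R)}$. They need not be pairwise distinct: averages may coincide, and every empty cell receives $0$. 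We therefore invoke the last clause of \Cref{thm:cell_ctrl}, which requires no distinctness.

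This is only safe if the sup bound $\mathfrak M$ stays uniform. By \eqref{eq:defMp}, $\mathfrak M$ depends only on $d,T,R$ and $\max_k\|\bfr_k\|\le\|y\|_\infty$, so it is bounded independently of $N$ and of the sample. As a fallback, one could perturb coincident targets by an amount $\le\eta_N/2$ to make them distinct, and then apply the uniform version of \Cref{thm:cell_ctrl} with tolerance $\eta_N/2$.

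The width threshold $p_N$ is whatever \eqref{eq:p_cell_ctrl_thm} requires for this partition, margin $\delta_N$ and tolerance $\eta_N$, as the hypothesis allows. Applying \Cref{thm:cell_ctrl} yields, for each realization of $\mathcal D_N$, a control $\theta_N$ of width $p_N$ with two properties:
\[
\Phi_T^{\theta_N}(Q_k^{\delta_N})\subset B(\bfS_k,\eta_N) \quad\text{for all } k,
\qquad
\|\Phi_T^{\theta_N}\|_{L^\infty(I_R)}\le\mathfrak M .
\]
For the widths to be nested as $N$ grows, we also use that a control of width $p$ extends to any larger width by adding zero neurons.

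\textbf{Step 2: error balance.} Both $y_N=y_{N,h_N}$ and $\Phi_T^{\theta_N}$ are bounded uniformly in $N$. The proof of \Cref{thm:template_bound} therefore applies pointwise in $\mathcal D_N$ with constants independent of $N$, giving
\[
\mathbb E\,\mathcal R(\theta_N)\lesssim \mathbb E\|y-y_{N,h_N}\|^2_{L^2(\mu)}+\mu(\Omega_{\delta_N})+\eta_N^2 .
\]
Each term is then bounded separately:
\begin{itemize}
\item By \Cref{cor:holder}, the first term is $\lesssim N^{-2\alpha/(2\alpha+d)}$.
\item By \Cref{lem:grid-layer}, which holds deterministically since the grid does not depend on the data, $\mu(\Omega_{\delta_N})\lesssim\delta_N/h_N=h_N^{2\alpha}=N^{-2\alpha/(2\alpha+d)}$.
\item Choosing $\eta_N\coloneqq h_N^{\alpha}\wedge\eta_0$ gives $\eta_N^2\le N^{-2\alpha/(2\alpha+d)}$.
\end{itemize}
Summing the three bounds gives the claimed rate.

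\textbf{Main obstacle.} The delicate point is uniformity of constants. Applying \Cref{thm:cell_ctrl} with $N$-dependent cells makes $\mathfrak C$ and $\eta_0$ depend on $N$, but both are absorbed into the definition of $p_N$ and the requirement "$N$ sufficiently large". One caveat needs care: $\eta_0\le s_*/2\sim\delta_N$ shrinks with $N$. The admissible tolerance $\eta_N$ must therefore be taken as $\min\{h_N^\alpha,\eta_0(N)\}$, which is still $\lesssim h_N^\alpha$ and so preserves the rate.

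What genuinely has to be checked is that no hidden constant in the template bound grows with $N$. The only such constant is $\|\Phi_T^{\theta_N}\|_\infty\le\mathfrak M$, and it is $N$-independent precisely because $\mathfrak M$ depends only on $d,T,R$ and the target magnitudes. Verifying this uniform bound on $\mathfrak M$, including in the non-distinct-target case, is the step I would scrutinize most.
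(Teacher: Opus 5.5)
Your proposal is correct and follows the paper's proof: take $\eta_N^2\asymp N^{-2\alpha/(2\alpha+d)}$, apply \Cref{thm:cell_ctrl} to the trimmed grid cores with the empirical averages as targets, and insert \Cref{cor:holder}, \Cref{lem:grid-layer} (using $\delta_N/h_N=h_N^{2\alpha}$) and $\eta_N^2$ into the template bound. Your additional care fills in points the paper's short proof passes over silently: you invoke the non-distinct-target clause, you cap $\eta_N$ by $\eta_0\le s_*/2$ (which is of order $\delta_N$), and you check that $\mathfrak M$ depends only on $d,T,R$ and $\|y\|_{L^\infty(I_R)}$.
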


Next, we establish the analogous result for the faster Voronoi partition.

\begin{proposition}[Voronoi rate]\label{prop:sanode-voronoi}
Let $d \ge 2$, let $y \in \mathscr{C}^{0,\alpha}(I_R;\R^d)$ for some $\alpha \in (0,1]$, and let $\bfx_1,\dots,\bfx_N$ be drawn i.i.d.\ from $\mu \in \mathcal{P}_{\mathrm{ac}}(I_R)$ with density bounded above and below on $I_R$. Define
\begin{equation}\label{eq:voronoi-param-choices}
\delta_N \coloneqq (\log N)^{\frac{2\alpha}{d}}\, N^{-\frac{2\alpha+1}{d}},
\end{equation}
and let $p_N$ satisfy condition \eqref{eq:p_cell_ctrl_thm} for the Voronoi partition generated by $\{\bfx_i\}_{i=1}^N$ with margin $\delta_N$. Then, for all sufficiently large $N$, there exists a data-dependent control $\theta_N$ of width $p_N$ for \eqref{eq:saNODE} such that
\[
\mathbb{E}_{\mathcal{D}_N}\left[\mathcal{R}(\theta_N)\right]
\;\lesssim\;
\left(\frac{\log N}{N}\right)^{\frac{2\alpha}{d}}.
\]
\end{proposition}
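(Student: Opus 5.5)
We plan to derive the statement directly from \Cref{cor:rates} (Voronoi case) together with \Cref{thm:cell_ctrl}, in close analogy with \Cref{prop:sanode-rate}. Given the sample, let $\{V_i\}_{i=1}^N$ be the Voronoi cells \eqref{eq:voronoi.cells}. Set $y_N\coloneqq y_N^V$, i.e.\ $\bfr_i=y(\bfx_i)$. With $\delta=\delta_N$, consider the trimmed cores $V_i^{\delta_N}$. Each core is a compact convex subset of $I_R$, since it is the intersection of the convex polytope $V_i$ with the closed set $\{\dist(\cdot,\partial V_i)\ge\delta_N\}$, which is convex inside a convex set. Distinct cores are at mutual distance at least $2\delta_N$. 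Discard the empty cores and apply \Cref{thm:cell_ctrl} to the remaining ones, with targets $\bfr_i$ and tolerance $\eta_N\coloneqq(\log N/N)^{\alpha/d}$.

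The targets may fail to be distinct, because $y$ need not be injective. In that case we use the final clause of \Cref{thm:cell_ctrl}: for each fixed $\eta$ a control still exists. Alternatively, we perturb the targets by at most $\eta_N/2$ to make them distinct, which costs only $O(\eta_N^2)$ in the risk. The width $p_N$ is, by hypothesis, chosen to satisfy \eqref{eq:p_cell_ctrl_thm} for this configuration. For large $N$ we have $\eta_N\le\eta_0$, since $\eta_0\le s_*/2$ with $s_*\gtrsim\delta_N$, and $\eta_N$ is much smaller than $\delta_N$ would require checking. This is the delicate point: $\eta_N\gg\delta_N$. The fix is to take $\eta\coloneqq\min\{\eta_N,\eta_0\}$. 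The resulting bound then only improves, so the admissibility condition $\eta\le\eta_0$ is harmless, and $p_N$ is simply defined through the resulting $\eta$.

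The theorem yields a control $\theta_N$ with $\Phi_T^{\theta_N}(V_i^{\delta_N})\subset B(\bfr_i,\eta)$ and $\|\Phi_T^{\theta_N}\|_{L^\infty(I_R)}\le\mathfrak M$. Here $\mathfrak M$ depends on $d,T,R$ and on $\max_i\|\bfr_i\|\le\|y\|_{L^\infty(I_R)}$, hence is uniform in $N$ and in the sample. Also $\|y_N^V\|_{L^\infty}\le\|y\|_{L^\infty}$. Thus the uniform boundedness hypothesis of \Cref{thm:template_bound} holds with constants independent of $\mathcal D_N$. This uniformity is the step we expect to need the most care, because the implicit constant in \eqref{eq:template_bound} must not depend on the random configuration. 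We also need measurability of $\mathcal D_N\mapsto\mathcal R(\theta_N)$ in order to take the expectation; this can be ensured by a measurable selection of the construction, or sidestepped by bounding pointwise before integrating.

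Inserting everything into \eqref{eq:template_bound} and applying \Cref{prop:voronoi-rate} and \Cref{lem:voronoi-layer} gives
\[
\mathbb E_{\mathcal D_N}[\mathcal R(\theta_N)]\lesssim\Big(\frac{\log N}{N}\Big)^{\frac{2\alpha}{d}}+\delta_N N^{1/d}+\eta^2 .
\]
With the choice \eqref{eq:voronoi-param-choices} we get $\delta_NN^{1/d}=(\log N)^{2\alpha/d}N^{-2\alpha/d}$, and $\eta^2\le\eta_N^2$ equals the same rate. All three terms are therefore of order $(\log N/N)^{2\alpha/d}$. \Cref{lem:voronoi-layer} requires $\delta_N$ to be sufficiently small, which holds for all large $N$ since $\delta_N\to0$; this explains the restriction to large $N$ in the statement.
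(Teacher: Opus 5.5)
Your proposal is correct and follows the paper's route: apply the Voronoi case of \Cref{cor:rates} (that is, \Cref{thm:template_bound} with \Cref{prop:voronoi-rate} and \Cref{lem:voronoi-layer}) together with \Cref{thm:cell_ctrl}, then check that $\delta_N N^{1/d}$ and $\eta_N^2$ both equal $(\log N/N)^{2\alpha/d}$. Your opening claim that $\eta_N\le\eta_0$ for large $N$ is false, since typically $\eta_0\le s_*/2=\delta_N\ll\eta_N$, but replacing $\eta_N$ by $\min\{\eta_N,\eta_0\}$, as you then do, is the right fix; together with the sample-independent bound on $\mathfrak M$ via $\|y\|_{L^\infty(I_R)}$ and your handling of repeated targets, this fills in points that the paper's two-line proof leaves implicit.
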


The proofs of both propositions are provided in \Cref{sec:proof_gen}. In both cases, the width $p_N$ ensures the existence of the required flow for each $N$. This required width scaling is directly tied to the geometry of the chosen partition---specifically, the minimum cell separation and maximum cell diameter. As partitions refine, these worst-case geometric constraints drive the super-polynomial growth discussed in \Cref{rem:Cgeo_cell_ctrl}.


\section{Numerical experiments}
\label{s:num}

We complement the theoretical analysis of \Cref{s:generalization} with numerical experiments on \eqref{eq:saNODE}. The first experiment investigates whether the trained model can match nonparametric estimator baselines at a network width well below the theoretical prescription. This tests the sharpness of the sufficient width scaling. The second demonstrates that time-dependence of the vector field is necessary for topologically non-trivial tasks, providing empirical support for the autonomous/non-autonomous separation discussed in \Cref{sss:control_auton}. The code to reproduce these experiments is available in the GitHub repository \href{https://github.com/ALive95/ExGen}{ExGen}.

Throughout, training points are sampled uniformly from $I_R = [-2,2]^d$, and the population risk is estimated on a fixed test set using the squared $\ell^2$-loss. All models are trained with the Adam optimizer at a fixed learning rate of $10^{-4}$. Training ends after a maximum number of epochs (specified per experiment), or earlier if the loss falls below $10^{-6}$ or fails to improve by more than $10^{-8}$ over $5 \times 10^3$ consecutive steps.

\subsection{Width scaling}

The width prescriptions derived in \Cref{s:generalization} scale explosively with the
dimension $d$, much like the storage costs of the nonparametric estimators they are
calibrated against (see also \Cref{rem:superpoly}). In practice, however, neural ODEs are inherently nonlinear function
approximators and may exploit this structure to achieve the same risk with far fewer
parameters. We investigate to what extent this is the case by fixing the dataset size $N$
and sweeping the network width $p$ to determine the minimal width at which
\eqref{eq:saNODE} matches classical nonparametric baselines.

We consider two target functions $f, g \colon \mathbb{R}^d \to \mathbb{R}^d$:
\begin{itemize}[leftmargin=*]
\item \textbf{Smooth target} ($\alpha = 1$): $f(\bfx)^{(2k-1)} = \sin(x^{(2k-1)})\cos(x^{(2k)})$ and $f(\bfx)^{(2k)}   = \cos(x^{(2k-1)})\sin(x^{(2k)})$ for $k=1,\dots,\lfloor d/2 \rfloor,$ with $f(\bfx)^{(d)} = x^{(d)}$ if $d$ is odd.
\item \textbf{Hölder-$1/2$ target} ($\alpha=1/2$): $g(\bfx)^{(i)} = \operatorname{sgn}(x^{(i)})\sqrt{|x^{(i)}|}$ for $i \in[d].$

\end{itemize}
We study two regimes: a low-dimensional setting with $d = 3$ and $N = 500$ training points, and a high-dimensional one where $d = 8$ with
$N = 5000$ training points. The larger dataset for $d = 8$ compensates for the
increased difficulty of the estimation problem in high dimension.
The width is varied over a logarithmic grid
$p \in \{5, 11, 23, 51, 109, 237, 512, 1024\}$.
Training runs for at most $3 \times 10^4$ gradient steps,
and the population risk is estimated on a fixed test set of $2 \times 10^3$ points.
For reference, the histogram and Voronoi estimator errors are computed at the same
$(N, d)$ and displayed as horizontal baselines.  
The histogram estimator uses the optimal bandwidth $h_N = N^{-1/(2\alpha+d)}$, prescribed by \Cref{cor:holder}.

In order to compare the complexity of \eqref{eq:saNODE} with that of the nonparametric
estimators, we report for each model the number of stored scalar values. For
\eqref{eq:saNODE} this coincides with the number of trainable parameters. For the
histogram estimator this is at most $Nd$ (at most $N$ cells can be occupied, each
storing a label in $\mathbb{R}^d$), and for the Voronoi estimator it is $2Nd$ ($N$
input--label pairs, each in $\mathbb{R}^d$). 

\Cref{fig:width_vs_error} reports the test risk as a function of $p$ for both targets
and both dimensions. The quantitative results are
collected in \Cref{tab_width}.

\begin{figure}[h!]
    \centering
    \includegraphics[width=0.9\linewidth]{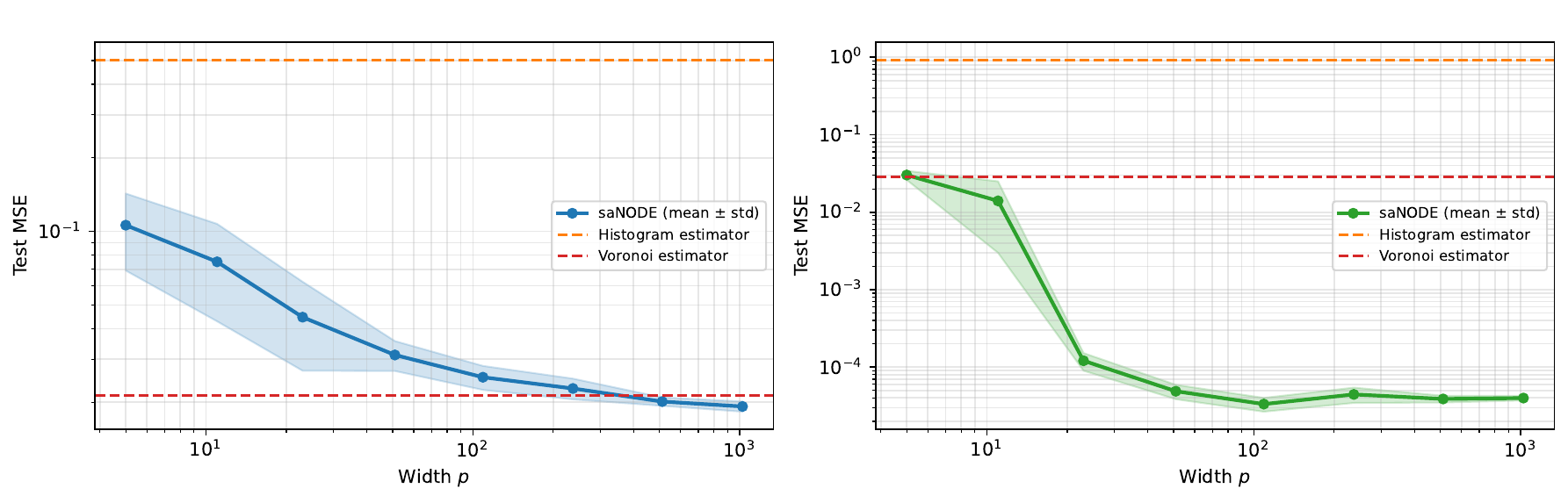}\\
    \vspace{-0.1cm}
    \includegraphics[width=0.9\linewidth]{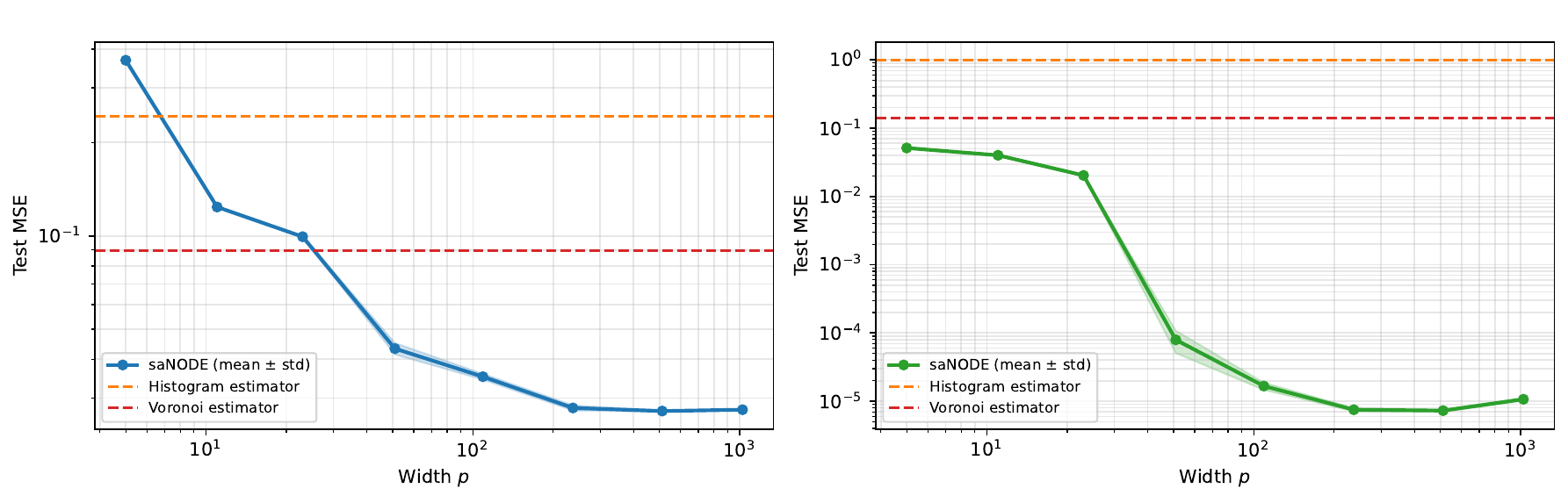}
   \caption{Test risk of \eqref{eq:saNODE} as a function of network width $p$ for the
smooth target $f$ (\textbf{left}) and the H\"older-$1/2$ target $g$ (\textbf{right}),
at $d=3$, $N=500$ (\textbf{top}) and $d=8$, $N=5000$ (\textbf{bottom}). Solid curves
show the mean over 3 independent seeds and shaded bands indicate $\pm 1$ standard
deviation. Horizontal dashed lines mark the histogram and Voronoi estimator errors.}
    \label{fig:width_vs_error}
\end{figure}

\paragraph{Case $d=3$.}
For the smooth target, the \eqref{eq:saNODE} already surpasses the histogram estimator at the
smallest tested width ($p=5$, $43$ parameters) and converges to Voronoi-level error
around $p = 512$ ($4099$ parameters), compared to the $3000$ values stored by
the Voronoi estimator. The crossover thus occurs at a parameter count comparable to
the baseline, well below the theoretical prescription.
The behavior on the H\"older-$1/2$ target is markedly different. Already at $p=23$
($187$ parameters), the \eqref{eq:saNODE} achieves a test MSE of $1.2 \times 10^{-4}$, more than
two orders of magnitude below the Voronoi baseline ($2.8 \times 10^{-2}$), and the
error plateaus around $p = 50$. The histogram estimator,
with $500$ stored values, achieves a test MSE far worse than the \eqref{eq:saNODE}
even at $p=5$. This gap suggests that the \eqref{eq:saNODE} exploits the component-wise,
sign-symmetric structure of $g$, which the histogram partition cannot capture, and that
the conservative worst-case width prescription is particularly loose for structured
targets.

\paragraph{Case $d=8$.} The \eqref{eq:saNODE} seems to do even better as the dimension increases, surpassing the histogram baseline ($2.42\times10^{-1}$)
around $p=51$ ($926$ parameters) and approaching Voronoi-level error ($8.95\times10^{-2}$)
around $p=237$ ($4274$ parameters), with almost an order of magnitude fewer parameters.

The picture for the Hölder-$1/2$ target is similar to the one in the low-dimensional setting. Despite the much higher
dimension, the \eqref{eq:saNODE} beats the histogram estimator ($9.98\times10^{-1}$) already at
$p=5$ and drops below the Voronoi baseline ($1.40\times10^{-1}$) between $p=23$ and
$p=51$, reaching a plateau of around $10^{-5}$ from $p=109$ onward, four orders of
magnitude below the Voronoi baseline. This
further confirms that the \eqref{eq:saNODE} exploits the component-wise structure of $g$ in a way
that is largely insensitive to dimension, while the classical estimators degrade rapidly.

\begin{table}[h!]
\centering
\caption{Test MSE of \eqref{eq:saNODE} versus $p$, averaged over 3 seeds. Complexity denotes the number of scalar degrees of freedom: trainable parameters for \eqref{eq:saNODE}, and $Nd$ (histogram) or $2Nd$ (Voronoi) stored scalars for the baselines.}
\label{tab_width}
\footnotesize
\setlength{\tabcolsep}{4pt}
\begin{minipage}[t]{0.49\textwidth}
\centering
\textbf{$d=3$, $N=500$}\\[2pt]
\begin{tabular}{l r r r r}
\toprule
Method & $p$ & Compl. & Smooth & H\"older-$1/2$ \\
\midrule
Histogram & --   & 1500 & $5.02\!\times\!10^{-1}$ & $9.22\!\times\!10^{-1}$ \\
Voronoi   & --   & 3000 & $2.14\!\times\!10^{-2}$ & $2.83\!\times\!10^{-2}$ \\
\midrule
\multirow{8}{*}{\eqref{eq:saNODE}}
& 5    & 43   & $1.06\!\times\!10^{-1}$ & $3.02\!\times\!10^{-2}$ \\
& 11   & 91   & $7.51\!\times\!10^{-2}$ & $1.40\!\times\!10^{-2}$ \\
& 23   & 187  & $4.46\!\times\!10^{-2}$ & $1.21\!\times\!10^{-4}$ \\
& 51   & 411  & $3.12\!\times\!10^{-2}$ & $4.88\!\times\!10^{-5}$ \\
& 109  & 875  & $2.53\!\times\!10^{-2}$ & $3.33\!\times\!10^{-5}$ \\
& 237  & 1899 & $2.28\!\times\!10^{-2}$ & $4.44\!\times\!10^{-5}$ \\
& 512  & 4099 & $2.01\!\times\!10^{-2}$ & $3.88\!\times\!10^{-5}$ \\
& 1024 & 8195 & $1.92\!\times\!10^{-2}$ & $3.99\!\times\!10^{-5}$ \\
\bottomrule
\end{tabular}
\end{minipage}
\hfill
\begin{minipage}[t]{0.49\textwidth}
\centering
\textbf{$d=8$, $N=5000$}\\[2pt]
\begin{tabular}{l r r r r}
\toprule
Method & $p$ & Compl. & Smooth & H\"older-$1/2$ \\
\midrule
Histogram & --   & 40000 & $2.42\!\times\!10^{-1}$ & $9.98\!\times\!10^{-1}$ \\
Voronoi   & --   & 80000 & $8.95\!\times\!10^{-2}$ & $1.40\!\times\!10^{-1}$ \\
\midrule
\multirow{8}{*}{\eqref{eq:saNODE}}
& 5    & 98    & $3.68\!\times\!10^{-1}$ & $5.13\!\times\!10^{-2}$ \\
& 11   & 206   & $1.24\!\times\!10^{-1}$ & $4.01\!\times\!10^{-2}$ \\
& 23   & 422   & $9.94\!\times\!10^{-2}$ & $2.04\!\times\!10^{-2}$ \\
& 51   & 926   & $4.33\!\times\!10^{-2}$ & $7.94\!\times\!10^{-5}$ \\
& 109  & 1970  & $3.52\!\times\!10^{-2}$ & $1.67\!\times\!10^{-5}$ \\
& 237  & 4274  & $2.79\!\times\!10^{-2}$ & $7.51\!\times\!10^{-6}$ \\
& 512  & 9224  & $2.72\!\times\!10^{-2}$ & $7.29\!\times\!10^{-6}$ \\
& 1024 & 18440 & $2.75\!\times\!10^{-2}$ & $1.07\!\times\!10^{-5}$ \\
\bottomrule
\end{tabular}
\end{minipage}
\end{table}

\subsection{Necessity of time-dependence}\label{ss:necessity.timedep}

As discussed in \Cref{sss:control_auton}, because macroscopic trajectories cannot cross, autonomous flows face structural obstructions to cell routing whenever the target configuration is topologically incompatible with a continuous deformation of the input. We now provide direct empirical evidence for this limitation.

We consider a checkerboard sorting problem: given a $K \times K$ partition of $[-1,1]^2$, the target assigns each point to $(+S,+S)$ if its cell indices $(i,j)$ have an even sum, and to $(-S,-S)$ otherwise, setting $S = 0.7$. For $K \geq 2$, the alternating initial regions are heavily interleaved, forcing paths to cross to reach their respective targets. Time-dependent models like \eqref{eq:saNODE} circumvent this spatial bottleneck by decoupling the motion in the extended space-time domain.

We compare \eqref{eq:saNODE} against its autonomous counterpart \eqref{eq:aNODE}. To ensure a fair comparison, the width of \eqref{eq:aNODE}  is chosen so that the total parameter count matches the \eqref{eq:saNODE}:
\[
\begin{cases}
\text{\eqref{eq:saNODE} params} = (2d+2)p + d \\    \text{\eqref{eq:aNODE} params} = (2d+1)p + d
\end{cases}
\quad \Longrightarrow \quad p_{\mathrm{aNODE}} = \left\lceil \frac{2d+2}{2d+1}\, p_{\mathrm{saNODE}} \right\rceil.
\]
Both models are trained on $N = 1000$ points with identical optimizer settings for at most $5 \times 10^4$ gradient steps, and evaluated on a separate test set of $10^3$ points. We sweep the resolution $K \in \{2, 3, 4\}$.

\begin{figure}[h!]
    \centering
    \includegraphics[width=0.42\linewidth]{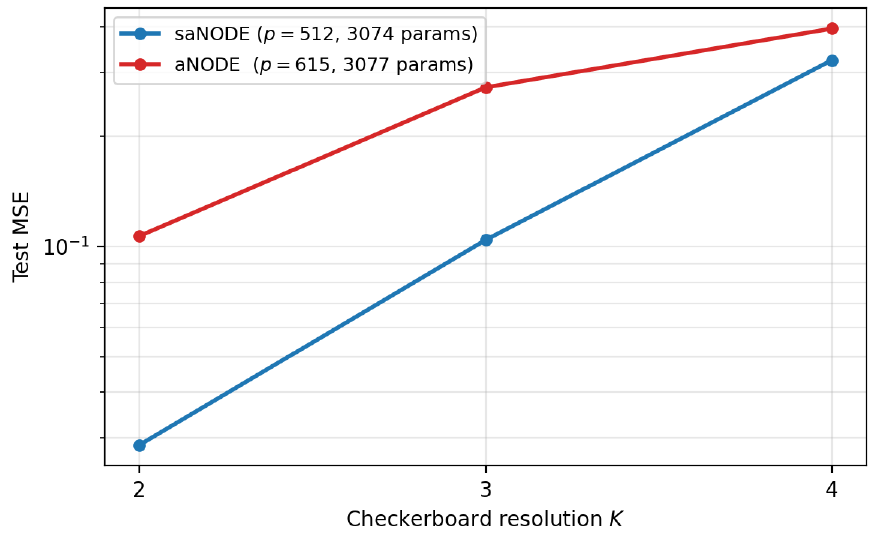}
    \caption{Test risk of \eqref{eq:saNODE} and parameter-matched \eqref{eq:aNODE} on the checkerboard sorting task with respect to $K$.}
    \label{fig:checkerboard_sweep}
\end{figure}

\Cref{fig:checkerboard_sweep} reports the test risk as a function of the  grid resolution $K$. As expected, we observe a growing gap in test error between the two models. This behavior is consistent with the theoretical obstruction: the autonomous flow cannot cleanly resolve the increasingly fine interleaving of the cells.

\begin{figure}[h!]
    \centering
\includegraphics[width=0.95\linewidth]{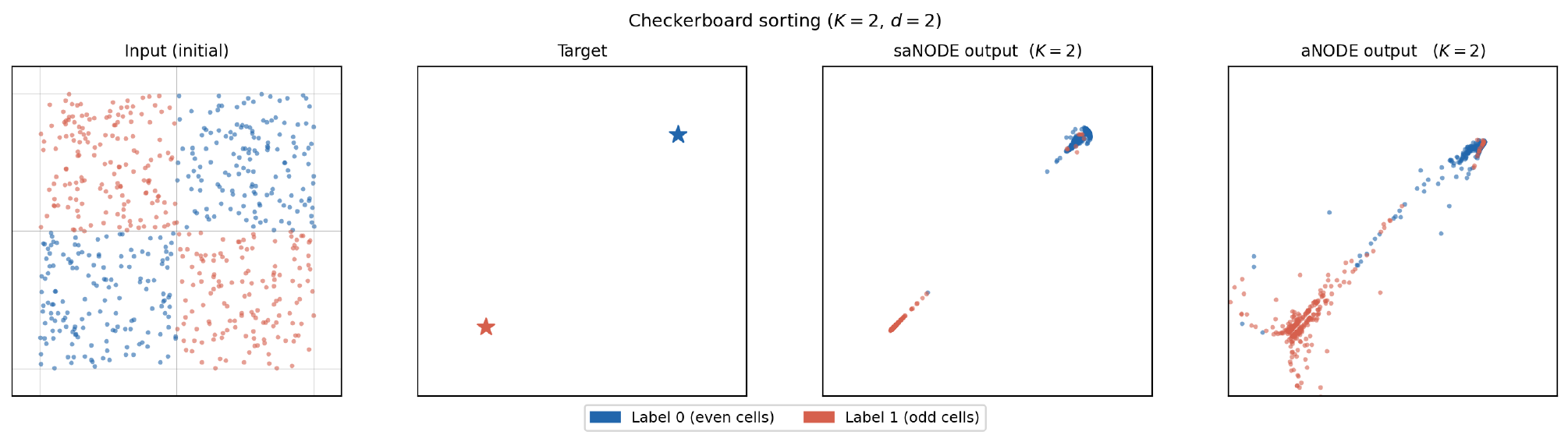}
    \includegraphics[width=0.95\linewidth]{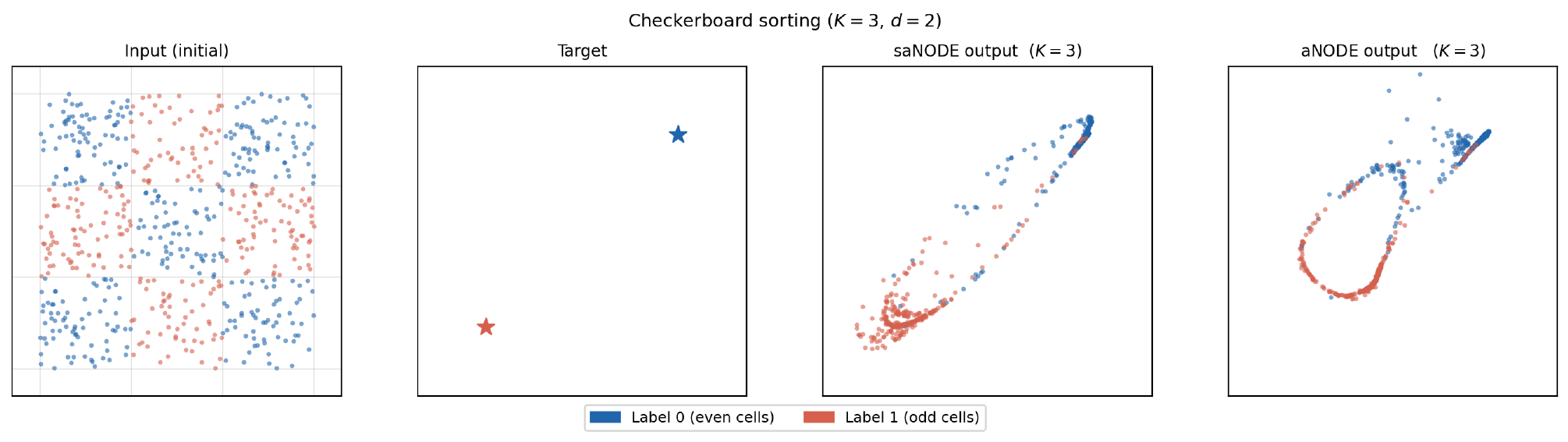}
    \includegraphics[width=0.95\linewidth]{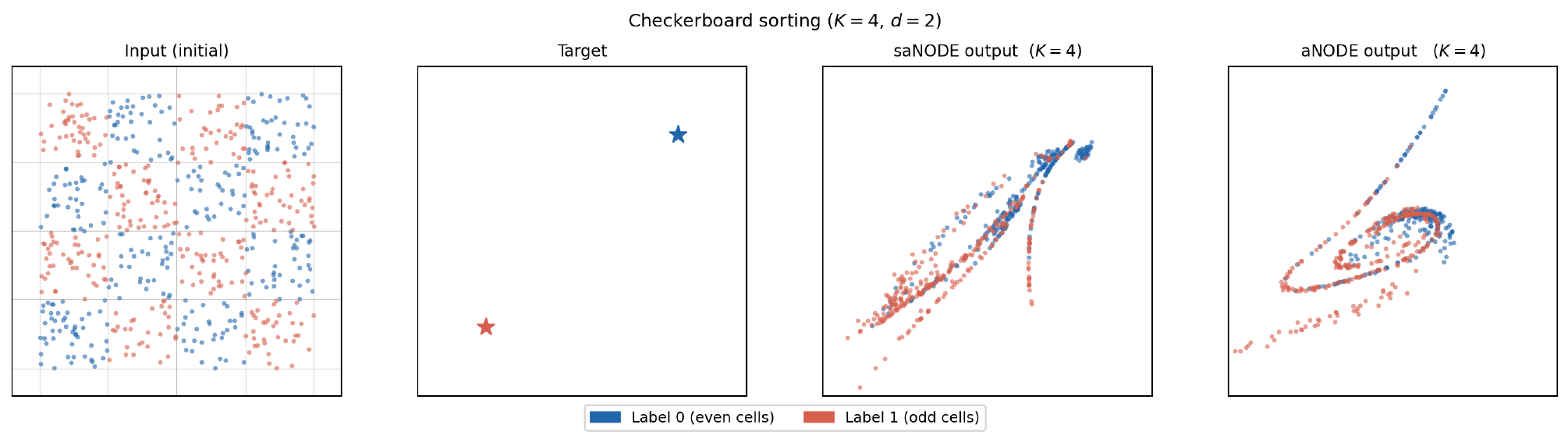}
    
    \caption{Checkerboard sorting for $d=2$ at $K=2$ (\textbf{top}), $K=3$ (\textbf{middle}), and $K=4$ (\textbf{bottom}). Each row shows, from left to right: input points, target assignment, \eqref{eq:saNODE} output, and \eqref{eq:aNODE} output. Points are colored by cell parity (even cells in \textcolor{blue}{blue}, odd cells in \textcolor{red}{red}). The \eqref{eq:saNODE} is able to separate the two classes at all resolutions, losing some precision only when $K=4$. By contrast, the \eqref{eq:aNODE} output becomes increasingly entangled as $K$ grows.}
    \label{fig:checkerboard_viz}
\end{figure}

\Cref{fig:checkerboard_viz} provides a direct visualization of the learned mappings. The \eqref{eq:saNODE} successfully routes the two classes into distinct clusters (showing only a minor loss of precision at $K=4$). By contrast, the \eqref{eq:aNODE} output remains  entangled, with points from opposite classes overlapping. As the resolution increases, this degradation becomes progressively more severe, confirming that the topological obstruction fundamentally limits autonomous models in region-routing tasks.


\section{Conclusions and perspectives}

\label{s:conclusions}

We have developed a controllability-based framework to study generalization in neural ODEs. A central insight of our work is that point interpolation, while necessary for expressivity, cannot fully explain generalization. To bridge this gap, we introduced \emph{simultaneous cell controllability}---the ability of the flow to compress entire input regions toward prescribed targets. We proved that the semi-autonomous model \eqref{eq:saNODE} satisfies this property, enabling it to approximate piecewise-constant nonparametric estimators.

This connection yields explicit population-risk bounds. By constructing a single, data-dependent flow that realizes a statistically meaningful proxy, we demonstrate that exact interpolation and quantitative generalization can coexist. Our results thus establish that overparameterized neural ODEs can emulate nonparametric procedures, providing an achievability guarantee rather than an efficient scaling law. This approach departs from standard uniform generalization bounds, which evaluate the worst-case error over an entire hypothesis class and force a rigid trade-off between expressivity and global complexity. 

Finally, we find that explicit time dependence is essential for our generalization mechanism. Without it, topological obstructions break simultaneous cell controllability, preventing autonomous architectures from routing macroscopic regions even when point interpolation is preserved.

\smallskip
We conclude by outlining natural directions opened for future work.
\paragraph{Saturation of regularity and higher-order flows.} 
The bounds established here rely on piecewise-constant (order-$0$) approximations, whose statistical bias saturates at Lipschitz continuity ($\alpha=1$). We deliberately favor this baseline because it naturally extends the exact \UIP. Nevertheless, if the ground truth is smoother ($y \in \mathscr{C}^{m,\alpha}$), replacing the order-$0$ proxy with a degree-$m$ cell-wise polynomial would yield faster statistical rates. Transferring this higher-order rate to $\Phi_T^\theta$, however, would require a complex analogue of \Cref{def:cell_ctrl} capable of realizing local higher-degree polynomials up to tolerance $\eta$. Formulating and proving such higher-order controllability remains an interesting open problem.



\paragraph{Quantitative sharpness.} 
While our proofs establish the vanishing of population risk, the required network width grows super-polynomially with $N$ due to worst-case geometric bottlenecks in the partition. Closing the gap between these theoretical thresholds and the much smaller widths observed in practice requires sharper quantitative controllability estimates. 

\paragraph{From static maps to trajectories.} 
Finally, our controllability viewpoint suggests a natural extension from static regression to dynamical-system learning. While this work uses neural ODEs to approximate fixed terminal maps, applications often involve continuous sequences. Extending simultaneous cell controllability to steer entire trajectory tubes---rather than just terminal cells---would provide a framework for trajectory-level generalization bounds in sequence modeling.

\section{Proofs}
\label{appendix}
\subsection{Proofs of \texorpdfstring{\Cref{s:expressivity}}{Section \ref*{s:expressivity}}}

\subsubsection{Proof of Theorem \ref{thm:SANODE_exact}}\label{ss:saNODE}

We show that \eqref{eq:saNODE} is simultaneously controllable, and we give an explicit construction. In particular, to steer $N$ points to prescribed targets we use at most $p=2N$ neurons. The proof requires several intermediate results.

A first key observation, that we will use throughout this section, is that
each triple of parameters $(\bfa_j,b_j,c_j)$ defines a moving hyperplane
\begin{equation}\label{eq.movinghyp}
\mathsf{H}_j(t) \coloneqq \{x \in \R^d : \bfa_j\cdot \bfx + b_j t + c_j = 0\},
\end{equation}
which translates with constant normal velocity $-b_j\bfa_j/\|\bfa_j\|^2$. 

The following preliminary lemma provides exact controllability of $d-1$ coordinates while keeping one coordinate fixed.

\begin{lemma}\label{lem:exactcontrol}
Let $N\ge1$, $d\ge2$, and let $\{(\bfx_i,\bfy_i)\}_{i=1}^N\subset\R^d\times\R^d$ be any admissible finite dataset in the sense of \eqref{eq:interp:distinct_targets} and such that $x_i^{(1)}\neq x_j^{(1)}$ for all $i\neq j$. For any $T>0$ and any $R > \max_i x_i^{(1)}$, there exists
\[
\theta_R=(\bfw_i,\bfa_i,b_i,c_i)_{i=1}^N\in(\R^d\times\R^d\times\R\times\R)^N
\]
such that the flow map $\Phi_T^{\theta_R}$ of \eqref{eq:saNODE} satisfies, for each $i\in[N]$,
\begin{align*}
\left[\Phi_T^{\theta_R}(\bfx_i)\right]^{(1)}=x_i^{(1)} \qquad \text{and} \qquad 
\left[\Phi_T^{\theta_R}(\bfx_i)\right]^{(k)}=y_i^{(k)} \text{ for } k=2,\dots,d
\end{align*}
and, for each $i\in[N]$, we have $(\bfa_i \cdot \bfx + b_i t + c_i)_+ = 0$ for all $t \ge T$ and any $\bfx \in \R^d$ such that $x^{(1)} \le R$.

Furthermore, for every $R_0>\max_i x_i^{(1)}$, the family of controls $\{\theta_R\}_{R\ge R_0}$ can be chosen so that
\begin{equation}
  \sup_{R\ge R_0}
\max_{i\in[N]}\max_{t\in[0,T]}\max_{k=2,\dots,d}
\left|\left[\Phi_t^{\theta_R}(\bfx_i)\right]^{(k)}\right|<\infty .  
\end{equation}

\end{lemma}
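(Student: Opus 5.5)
We sort the points so that $x_1^{(1)}<x_2^{(1)}<\dots<x_N^{(1)}$ and use one neuron per point, with every $\bfw_i$ orthogonal to $e_1$ and every $\bfa_i$ parallel to $-e_1$. Concretely, we take
\[
\bfa_i=-e_1,\qquad \bfw_i=(0,\bfv_i)\ \text{ with } \bfv_i\in\R^{d-1},
\]
and choose $b_i>0$, $c_i$ so that the activation $(-x^{(1)}+b_it+c_i)_+$ is a moving half-space: the hyperplane $\mathsf H_i(t)=\{x^{(1)}=b_it+c_i\}$ sweeps in the $+e_1$ direction, and neuron $i$ is active exactly on $\{x^{(1)}<b_it+c_i\}$. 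Since $\bfw_i\cdot e_1=0$, the first coordinate of every trajectory is frozen, so $[\Phi_t(\bfx_i)]^{(1)}=x_i^{(1)}$ for all $t$. The remaining coordinates then evolve by a pure quadrature:
\[
\frac{\diff}{\diff t}\left[\Phi_t(\bfx_j)\right]^{(2:d)}=\sum_i \bfv_i\,\big(b_it+c_i-x_j^{(1)}\big)_+ ,
\]
a known function of $t$ that does not depend on the state. Hence
\[
\left[\Phi_T(\bfx_j)\right]^{(2:d)}-x_j^{(2:d)}=\sum_i \bfv_i\,M_{ij},\qquad M_{ij}\coloneqq\int_0^T\big(b_it+c_i-x_j^{(1)}\big)_+\diff t .
\]

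The deactivation requirement fixes the hyperplane positions. Neuron $i$ must vanish for all $t\ge T$ and all $x^{(1)}\le R$. Since $b_i>0$ the hyperplane moves right, so it would keep activating points, which is the wrong sign. We therefore take $b_i<0$ instead, so the hyperplane moves left. Then $b_it+c_i\le R'$ is decreasing in $t$, and it suffices to have $b_iT+c_i\le\min_j x_j^{(1)}$.

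Requiring inactivity on all of $\{x^{(1)}\le R\}$ would, however, force $b_iT+c_i\le -\infty$. So we reverse the orientation: take $\bfa_i=+e_1$, giving activation $(x^{(1)}+b_it+c_i)_+$, active on $\{x^{(1)}>-b_it-c_i\}$. With $b_i<0$ the threshold $s_i(t)\coloneqq -b_it-c_i$ increases in $t$. Choosing $s_i(T)\ge R$ makes the neuron inactive for every $t\ge T$ on $\{x^{(1)}\le R\}$. This is clearly the intended geometry, and it explains the hypothesis $R>\max_i x_i^{(1)}$.

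We then place the starting thresholds in the gaps of the sorted data,
\[
s_i(0)\in\big(x_{i-1}^{(1)},\,x_i^{(1)}\big),
\]
with $s_1(0)<x_1^{(1)}$, and set $s_i(T)=R$. Neuron $i$ then acts on $\bfx_j$ only if $j\ge i$. For such pairs,
\[
M_{ij}=\int_0^T\big(x_j^{(1)}-s_i(t)\big)_+\diff t>0,\qquad M_{ij}=0 \ \text{ for } j<i .
\]
Thus $M$ is lower triangular in $(j,i)$ with a positive diagonal, hence invertible. The linear system $\sum_i M_{ij}\bfv_i=y_j^{(2:d)}-x_j^{(2:d)}$ therefore has a unique solution, found by forward substitution, and this gives exact interpolation of coordinates $2,\dots,d$.

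The main obstacle is the uniform bound as $R\to\infty$. Fix $s_i(0)$ independent of $R$. Because $s_i$ moves linearly, the time during which $\bfx_j$ lies in the active region scales like $T(x_j^{(1)}-s_i(0))/(R-s_i(0))$, so the diagonal $M_{ii}\sim c/R$ degenerates and $\bfv_i\sim R$ blows up. The trajectories stay bounded nonetheless. Write $[\Phi_t(\bfx_j)]^{(2:d)}-x_j^{(2:d)}=\sum_i\bfv_i M_{ij}(t)$, with $M_{ij}(t)$ the integral up to time $t$. Each $M_{ij}(t)$ is nondecreasing, is supported where $s_i(t)<x_j^{(1)}$, and satisfies $M_{ij}(t)\le M_{ij}(T)$. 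Moreover, the rescaled matrix $R\,M(T)$ converges as $R\to\infty$ to an invertible lower-triangular limit, since substituting $s=s_i(t)$ gives
\[
M_{ij}=\frac{T}{R-s_i(0)}\int_{s_i(0)}^{x_j^{(1)}}\big(x_j^{(1)}-s\big)\diff s .
\]
Hence $R\,\bfv_i$ stays bounded uniformly for $R\ge R_0$. The intermediate quantities $\bfv_iM_{ij}(t)=(R\bfv_i)\cdot M_{ij}(t)/R$ are therefore bounded in terms of the limit, and this yields the stated supremum bound.
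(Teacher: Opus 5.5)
Your final construction, after you discard the $\bfa_i=-\bfe_1$ attempts, is essentially the paper's: $\bfa_i=\bfe_1$, $w_i^{(1)}=0$, and activation thresholds $-b_it-c_i$ sweeping from the data gaps up to $R$ at $t=T$. The paper starts them at $x_1^{(1)}-1$ and $x_{i-1}^{(1)}$ rather than strictly inside the gaps, which is immaterial. The rest also matches: a triangular system with positive diagonal, and the scaling $M_{ij}=O(R^{-1})$ against weights $O(R)$ for the uniform bound. Your explicit formula $M_{ij}=T\,(x_j^{(1)}-s_i(0))^2/\bigl(2(R-s_i(0))\bigr)$ makes this last step more precise than the paper's sketch.

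One slip needs fixing. It is $\bfv_i/R$, not $R\,\bfv_i$, that stays bounded; you note yourself that $\bfv_i\sim R$. The estimate should therefore read $\|\bfv_i\|\,M_{ij}(t)\le (\|\bfv_i\|/R)\,\bigl(R\,M_{ij}(T)\bigr)$. In fact your formula shows that $\bfv_i/(R-s_i(0))$ solves an $R$-independent triangular system, so the bound holds for all $R\ge R_0$ with no limiting argument.
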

\begin{proof}
Up to relabeling the points, we may assume
\begin{equation}\label{eq:order}
x_1^{(1)}<x_2^{(1)}<\cdots<x_N^{(1)}.
\end{equation}
We set $\bfa_i=\bfe_1$, and $w_i^{(1)}=0$, for all $i\in[N]$.
Then \eqref{eq:saNODE} becomes
\[
\dot \bfx(t)=\sum_{i=1}^{N} \bfw_i\,(x^{(1)}(t)+b_it + c_i)_+,\qquad t\in[0,T],
\]
so, in components,
\begin{equation}\label{eq:component}
\dot x^{(1)}(t)=0,\qquad
\dot x^{(k)}(t)=\sum_{i=1}^{N} w_i^{(k)}\,(x^{(1)}(t)+b_it + c_i)_+,\qquad k=2,\dots,d.
\end{equation}
In particular, $\left[\Phi_t^{\theta_R}(\bfx_i)\right]^{(1)}=x_i^{(1)}$ for all $t\in[0,T],$ so the first coordinate is frozen and \eqref{eq:order} is preserved.

\smallskip
\noindent\textbf{Step 1.} Set
\[
b_1=\frac{x_1^{(1)}-1-R}{T},
\quad
c_1=1-x_1^{(1)},\qquad\text{and}\qquad b_i=\frac{x_{i-1}^{(1)}-R}{T},
\quad
c_i=-x_{i-1}^{(1)}\quad (i=2,\dots,N).
\]
Then each moving hyperplane $\mathsf{H}_i(t) \coloneqq \{ x \in \R^d \,:\, x^{(1)}+b_it + c_i=0\}$
satisfies:
\begin{itemize}[leftmargin=2em]
\item[(i)] $\mathsf{H}_i(t)$ is orthogonal to $\bfe_1$ for all $t$.
\item[(ii)] For $i=1$, $\mathsf{H}_1(0): x^{(1)}=x_1^{(1)}-1$ and $\mathsf{H}_1(T): x^{(1)}=R$.
\item[(iii)] For $i\ge2$, $\mathsf{H}_i(0): x^{(1)}=x_{i-1}^{(1)}$ and $\mathsf{H}_i(T): x^{(1)}=R$.
\end{itemize}
By the choice of $(b_i,c_i)$ we have $b_i T + c_i = -R$ for all $i \in [N]$. Furthermore, the hyperplanes do not intersect on $[0,T)$.
\smallskip

\noindent\textbf{Step 2.} Fix $i\in[N]$. Since $x^{(1)}(t)\equiv x_i^{(1)}$, for each $k\ge2$ we have
\[
\frac{\diff}{\diff t}\left[\Phi_t^{\theta_R}(\bfx_i)\right]^{(k)}
=\sum_{j=1}^N w_j^{(k)}\,(x_i^{(1)}+b_j t + c_j)_+.
\]
With the above choice of $(b_j,c_j)$ and \eqref{eq:order}, if $j>i$ (hence $j\ge 2$), the argument $t \mapsto x_i^{(1)}+b_j t + c_j$ is equal to $x_i^{(1)}-x_{j-1}^{(1)}\le 0$  at $t=0$, and equal to $x_i^{(1)}-R < 0$  at $t=T$. By linearity, 
\[
x_i^{(1)}+b_j t + c_j\le 0\qquad\text{for all }t\in[0,T],
\]
so neuron $j$ is never active along the trajectory from $\bfx_i$. Therefore,
\[
\left[\Phi_T^{\theta_R}(\bfx_i)\right]^{(k)}
= x_i^{(k)}+\sum_{j=1}^{i} w_j^{(k)}\int_0^T (x_i^{(1)}+b_j t + c_j)_+\diff t \eqqcolon  x_i^{(k)}+\sum_{j=1}^{i} w_j^{(k)} M_{i,j}.
\]
Conversely, if $j=1$, the argument at $t=0$ is $x_i^{(1)}+c_1=x_i^{(1)}+1-x_1^{(1)}>0,$ while for $2\le j\le i$ it is
\[
x_i^{(1)}+c_j=x_i^{(1)}-x_{j-1}^{(1)}
\ge x_i^{(1)}-x_{i-1}^{(1)}>0.
\]
Thus, for all $1\le j\le i$, the integrand is strictly positive at $t=0$ and continuous, so $M_{i,j}>0$.

\smallskip
\noindent\textbf{Step 3.} Imposing $\left[\Phi_T^{\theta_R}(\bfx_i)\right]^{(k)}=y_i^{(k)}$ for $k=2,\dots,d$ yields, for each fixed $k$,
\[
y_i^{(k)}-x_i^{(k)}=\sum_{j=1}^{i} w_j^{(k)}\,M_{i,j},\qquad i\in[N],
\]
which is a lower-triangular linear system in $(w_1^{(k)},\dots,w_N^{(k)})$ with strictly positive diagonal. Hence it has a unique solution for each $k=2,\dots,d$.

Let $t \ge T$ and $\bfx \in \R^d$ with $x^{(1)} \le R$. Since $R > \max_i x_i^{(1)}$, Step 1 yields $b_i < 0$ and $b_i T + c_i = -R$ for all $i\in[N]$. Thus, for $t \ge T$ it holds that $b_i t + c_i \le -R$, which implies  $(\bfa_i \cdot \bfx + b_i t + c_i)_+ = 0$ because
\[
\bfa_i \cdot \bfx + b_i t + c_i \le x^{(1)} - R \le 0.
\]

Finally, fix $R_0>\max_i x_i^{(1)}$. The uniform boundedness follows from the explicit triangular construction. Indeed, the entries $M_{i,j}$ are of order $R^{-1}$ as $R\to\infty$, while the corresponding weights grow at most linearly in $R$. Hence 
\[
w_j^{(k)}\int_0^t (x_i^{(1)}+b_js+c_j)_+\,\diff s
\]
remain uniformly bounded for $R\ge R_0$, uniformly in $i,j,k$ and $t\in[0,T]$. Since the same quantities depend continuously on $R$ and no $M_{i,i}$ vanishes for $R\ge R_0$, the bound extends to all $R\ge R_0$.
\end{proof}

\begin{proof}[Proof of \Cref{thm:SANODE_exact}]
Since the dataset is finite and admissible, we can apply an arbitrarily small generic rotation to ensure that
\[
x_i^{(1)}\neq x_j^{(1)} \quad\text{and}\quad y_i^{(2)}\neq y_j^{(2)}\quad\text{for }i\neq j.
\]
Up to relabeling the pairs, we assume
\begin{equation}\label{eq:order-y2}
y_1^{(2)}<y_2^{(2)}<\cdots<y_N^{(2)}.
\end{equation}
We build $\theta\in(\R^d\times\R^d\times\R\times\R)^{2N}$ by concatenating two sets of $N$ neurons which are active during the two phases given by the time intervals $[0,T/2]$ and $[T/2,T]$.

\smallskip
\noindent\textbf{Step 1.} Fix $R_0>\max_i x_i^{(1)}$. By the last assertion of \Cref{lem:exactcontrol}, applied on the interval $[0,T/2]$, there exists $C_0>0$ such that, for every $R_1\ge R_0$,
\[
\max_{i\in[N]}\max_{t\in[0,T/2]}
\left|\left[\Phi_t^{\theta_{R_1}}(\bfx_i)\right]^{(2)}\right|
\le C_0 .
\]
Choose
\begin{equation}\label{eq:m}
R_2<(-C_0)\wedge \min_{i\in[N]}y_i^{(2)}.
\end{equation}

\smallskip
\noindent\textbf{Step 2.} Now, for $\ell=N+1,\dots,2N$, set $\bfa_\ell=-\bfe_2$ and $\bfw_\ell^{(k)}=0$ for all $k=2,\dots,d$. Choose $(b_\ell,c_\ell)$ so that the hyperplanes $\mathsf{H}_\ell(t)\coloneqq \{x \in \R^d \,:\, -x^{(2)}+b_\ell t+c_\ell=0\}$ all satisfy $\mathsf{H}_\ell(T/2): x^{(2)}=R_2$. At $t=T$, they must reach the following prescribed levels: for $\ell=N+1,\dots,2N-1$, impose
\[
b_\ell\,\frac{T}{2}+c_\ell=R_2\qquad\text{and}
\qquad
b_\ell\,T+c_\ell=y^{(2)}_{\ell+1-N},
\]
and for $\ell=2N$, impose
\[
b_{2N}\,\frac{T}{2}+c_{2N}=R_2\qquad\text{and}
\qquad
b_{2N}\,T+c_{2N}=y_N^{(2)}+1.
\]

Now consider the auxiliary dynamics on $[T/2,T]$ generated only by the second block, starting from
\[
\bfz_i=(x_i^{(1)},y_i^{(2)},\dots,y_i^{(d)}).
\]
Let $\Psi_i(t)$ denote the corresponding trajectory. Since $w_\ell^{(2)} = 0$ for all $\ell \ge N+1$ we have $[\Psi_i(t)]^{(2)}\equiv y_i^{(2)}$ on this interval, and the first coordinate satisfies
\[
\frac{\diff}{\diff t}\left[\Psi_i(t)\right]^{(1)}
=\sum_{\ell=N+1}^{2N} w_\ell^{(1)}\,(-y_i^{(2)}+b_\ell t+c_\ell)_+.
\]
Using \eqref{eq:order-y2} and the fact that $b_\ell t+c_\ell\le b_\ell T+c_\ell$ for $t\in[T/2,T]$, we deduce that for $\ell=N+1,\dots,N+i-1$,
\[
b_\ell t+c_\ell \le b_\ell T+c_\ell = y^{(2)}_{\ell+1-N} \le y_i^{(2)},
\]
meaning $(-y_i^{(2)}+b_\ell t+c_\ell)_+\equiv 0$ on $[T/2,T]$. Thus, only the neurons with $\ell\ge N+i$ contribute, yielding
\[
\left[\Psi_i(T)\right]^{(1)}
=x_i^{(1)}+\sum_{j=i}^{N} w_{N+j}^{(1)}\int_{T/2}^{T}(-y_i^{(2)}+b_{N+j}t+c_{N+j})_+\diff t \eqqcolon x_i^{(1)}+\sum_{j=i}^{N} w_{N+j}^{(1)}
\widetilde M_{i,j}. 
\]
By construction, $\widetilde M_{i,j}>0$ for all $1\le i\le j\le N$. Imposing $\left[\Psi_i(T)\right]^{(1)}=y_i^{(1)}$ yields the upper-triangular linear system
\[
y_i^{(1)}-x_i^{(1)}=\sum_{j=i}^{N} w_{N+j}^{(1)}\,\widetilde M_{i,j},\qquad i\in[N],
\]
which has a unique solution since the diagonal entries $\widetilde M_{i,i}$ are strictly positive.  Let
\[
B\coloneqq 1+\max_{i\in[N]}\max_{t\in[T/2,T]}[\Psi_i(t)]^{(1)}.
\]

\smallskip
\noindent\textbf{Step 3.} Choose $R_1>\max\{R_0,B\}$ and apply \Cref{lem:exactcontrol} on $[0,T/2]$ with target level $R_1$ to construct the first block. Since $R_1\ge R_0$, the bound defining $C_0$ applies. Hence, along the first-stage trajectories,
\[
\left[\Phi_t^{\theta_{R_1}}(\bfx_i)\right]^{(2)}\ge -C_0
\qquad\text{for }t\in[0,T/2].
\]
Moreover, since $b_\ell T/2+c_\ell=R_2$, we have
$b_\ell t+c_\ell\le R_2$ for $t\in[0,T/2]$.  Therefore, by $R_2<-C_0$,
\[
-\left[\Phi_t^{\theta_{R_1}}(\bfx_i)\right]^{(2)}+b_\ell t+c_\ell
\le C_0+R_2<0,
\]
so the second block is inactive on $[0,T/2]$. Since $R_1>B$, the first block is inactive on $[T/2,T]$ along the auxiliary trajectories $\Psi_i$. By uniqueness, the full trajectories coincide with the concatenation of the first-stage trajectories and the auxiliary second-stage trajectories. Therefore
\[
\Phi_T^\theta(\bfx_i)=\bfy_i,\qquad i\in[N].
\]

\end{proof}

\begin{remark}
\Cref{thm:SANODE_exact} extends to any neural ODE of the form
\begin{equation}\label{eq:sanodegen}
\dot \bfx(t)=\sum_{j=1}^{p} \bfw_j\left(\bfa_j\cdot \bfx(t) + f_j(t) + c_j\right)_+,
\qquad t\in[0,T],
\end{equation}
provided each $f_j$ is strictly monotone. Geometrically, the hyperplane velocities then vary over time rather than remain constant. We emphasize, however, that the linear choice $f_j(t)=b_j t$ is the simplest.

For instance, one may take $f_j(t)=d_j\,\tanh(b_j t)$, where $b_j>0$ rescales time and $d_j\in\R$ bounds the hyperplane speed. In this case, the evolution splits into two phases:

\textbf{1. Control:} For small \(b_j\, t\) one has \(\tanh(b_j\, t)<1\) and the exact-control objective is attained in this phase.

\textbf{2. Stationary:} As \(\tanh(b_i\, t)\to1\), the system smoothly transitions to an autonomous regime.

\end{remark}

\subsubsection{Proof of Theorem \ref{thm:exact_2ANODE}}\label{sss.2layers}


The proof of \Cref{thm:exact_2ANODE} requires the following preliminary lemma.

\begin{lemma}\label{lem:compact_support}
Fix $d\ge 2$. Let $f_1:\R^d\to\R^d$ be of the form \eqref{eq:aNODE}, let $\bfu_1, \dots, \bfu_d \in \R^d$ be linearly independent, and let $\alpha_i < \beta_i$ for each $i \in [d]$. For any $\delta_i > 0$ such that $\alpha_i + \delta_i < \beta_i - \delta_i$, define the sets
\[
K \coloneqq \{\bfx\in\R^d \ :\ \alpha_i \le \bfu_i\cdot\bfx \le \beta_i \ \forall i\in[d]\}\quad\text{and}\quad
\tilde K \coloneqq \{\bfx\in\R^d \ :\ \alpha_i+\delta_i \le \bfu_i\cdot\bfx \le \beta_i-\delta_i \ \forall i\in[d]\}.
\]
Then, there exists $f_2:\R^d\to\R^d$ of the form \eqref{eq:2layerANODE} such that $f_2 \equiv f_1$ on $\tilde K$ and $f_2 \equiv 0$ on $\R^d\setminus K$.
\end{lemma}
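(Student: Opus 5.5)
The plan is to localize each neuron of $f_1$ separately, using a piecewise-linear \emph{penalty} built from $2d$ first-layer ReLUs that vanishes exactly on $\tilde K$ and grows at least linearly away from it. Write $f_1(\bfx)=\sum_{j=1}^p \bfw_j(\bfa_j\cdot\bfx+b_j)_+$ and define
\[
P(\bfx)\coloneqq\sum_{i=1}^d\Big[(\alpha_i+\delta_i-\bfu_i\cdot\bfx)_+ + (\bfu_i\cdot\bfx-\beta_i+\delta_i)_+\Big].
\]
Then $P\ge0$, and $P\equiv0$ on $\tilde K$. Moreover, if $\bfx\notin K$, some constraint fails by more than $\delta_i$, so $P(\bfx)>\min_i\delta_i$.

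The candidate field is
\[
f_2(\bfx)\coloneqq\sum_{j=1}^p \bfw_j\Big((\bfa_j\cdot\bfx+b_j)_+ - (-\bfa_j\cdot\bfx-b_j)_+ - M_j P(\bfx)\Big)_+ .
\]
Each inner expression is a linear combination of first-layer ReLUs, since the affine map $\bfa_j\cdot\bfx+b_j$ is the difference of two ReLUs. Hence $f_2$ is of the form \eqref{eq:2layerANODE}, with second-layer biases $c_j=0$ and $p_2=p$.

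On $\tilde K$ we have $P=0$, so the $j$-th term reduces to $\bfw_j(\bfa_j\cdot\bfx+b_j)_+$, and therefore $f_2=f_1$ there. It remains to choose the constants $M_j>0$ so that $\bfa_j\cdot\bfx+b_j\le M_jP(\bfx)$ for every $\bfx\notin K$. Once this holds, every second-layer unit is inactive outside $K$, and $f_2\equiv0$ on $\R^d\setminus K$.

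The main step is this quantitative comparison, since the affine function is unbounded. I would prove a linear-growth lower bound for $P$ of the form $P(\bfx)\ge\kappa\,\dist(\bfx,\tilde K)$ with $\kappa>0$. The idea is to pass to coordinates $z_i=\bfu_i\cdot\bfx$, which is an invertible linear change of variables because the $\bfu_i$ are linearly independent. In these coordinates, $\tilde K$ is a box, and $P$ is the $\ell^1$ distance to that box. Hence $P(\bfx)\ge\|U\|^{-1}\dist(\bfx,\tilde K)$, where $U$ is the matrix with rows $\bfu_i$.

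On the other side, set $C_0\coloneqq\max_{\tilde K}|\bfa_j\cdot\bfx+b_j|$. This maximum is finite because $\tilde K$ is compact. By the Lipschitz bound on the affine map,
\[
\bfa_j\cdot\bfx+b_j\le C_0+\|\bfa_j\|\,\dist(\bfx,\tilde K).
\]
Outside $K$ we also have $\dist(\bfx,\tilde K)\ge\delta'$ for some $\delta'>0$, because $K$ contains a neighborhood of $\tilde K$ (for instance $\delta'=\min_i\delta_i/\|U\|$). Combining the three inequalities, the choice
\[
M_j\ge\kappa^{-1}\Big(\frac{C_0}{\delta'}+\|\bfa_j\|\Big)
\]
gives the required inequality, and this completes the construction.

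One bookkeeping point concerns the width count. My construction uses $2d+2$ first-layer units per second-layer unit, whereas the paper's later count is $p_1=2d$. That discrepancy could be absorbed by merging the affine term into the bias $c_j$ together with one extra ReLU, but it does not affect the existence claim in the lemma.
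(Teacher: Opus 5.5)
Your proposal is correct and is essentially the paper's proof. Both use the same $2d$-ReLU barrier and the same localization $\bfw_j\,(\,\cdot\,-M_jP)_+$. The paper feeds the single ReLU $(\bfa_j\cdot\bfx+b_j)_+$ into the outer unit instead of your difference of two ReLUs; this works with your same $M_j$ because $M_jP\ge0$, and it gives $2d+1$ first-layer units, which drops to $p_1=2d$ when $f_1$ is constant and absorbed into $c_j$. Your distance-based comparison makes explicit the linear-growth bound that the paper only asserts.

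There is one slip, which does not affect the argument since only $\kappa>0$ is needed. The constant should be $\kappa=\|U^{-1}\|^{-1}$, not $\|U\|^{-1}$. For example, with $U=\mathrm{diag}(\varepsilon,1)$ one has $P=\varepsilon\,\dist(\bfx,\tilde K)$ along the $x_1$-direction.
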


\begin{proof}[Proof of \Cref{lem:compact_support}]
Let $\delta_{\min} \coloneqq \min_{i\in[d]} \delta_i > 0$. We define the barrier function $B: \R^d \to \R$ by
\[
B(\bfx) \coloneqq \sum_{i=1}^{d} \left( (\alpha_i + \delta_i - \bfu_i\cdot\bfx)_+ + (\bfu_i\cdot\bfx - (\beta_i - \delta_i))_+ \right).
\]
By definition, if $\bfx \in \tilde K$, all arguments inside the ReLUs are non-positive, yielding $B(\bfx) = 0$. Conversely, if $\bfx \notin K$, then for some index $j$, either $\bfu_j\cdot\bfx < \alpha_j$ or $\bfu_j\cdot\bfx > \beta_j$. In either case, the corresponding ReLU term is strictly greater than $\delta_j$, ensuring that $B(\bfx) > \delta_{\min} > 0$ on $\R^d \setminus K$.

Let $f_1(\bfx)=\sum_{m=1}^{p_2} \bfv_m L_m(\bfx)$, where $L_m(\bfx)\coloneqq (\bfc_m\cdot \bfx+d_m)_+$ and $\bfv_m \in \R^d$. 
Since $\bfu_1, \dots, \bfu_d$ span $\R^d$, then $B(\bfx)$ grows linearly as $\|\bfx\| \to \infty$. Because each $L_m$ has at most linear growth, the ratio $L_m/B$ is bounded on $\R^d \setminus K$, allowing us to choose $\kappa > 0$ such that
\[
L_m(\bfx) \le \kappa B(\bfx) \quad \text{for all } \bfx \notin K \text{ and } m \in [p_2].
\]
Finally, define
\[
f_2(\bfx) \coloneqq \sum_{m=1}^{p_2} \bfv_m\,\left(L_m(\bfx)-\kappa B(\bfx)\right)_+.
\]
If $\bfx\in\tilde K$, then $B(\bfx)=0$, which implies $f_2(\bfx) = \sum_m \bfv_m L_m(\bfx) = f_1(\bfx)$. 
If $\bfx\notin K$, the bound $L_m(\bfx)\le \kappa B(\bfx)$ guarantees that every ReLU is inactive, yielding $f_2(\bfx)=0$. Because $B(\bfx)$ is a linear combination of ReLUs of affine functions, $f_2$ is representable in the two-layer form \eqref{eq:2layerANODE}.
\end{proof}

\begin{remark}
This result extends immediately to any pair of strictly nested convex polytopes: one replaces the defining inequalities of $K$ in the construction of $B$ with the affine constraints defining its facets.
\end{remark}

We are now ready to prove \Cref{thm:exact_2ANODE}.

\begin{proof}[Proof of \Cref{thm:exact_2ANODE}]
Fix $T>0$ and an admissible dataset $\{(\bfx_i,\bfy_i)\}_{i=1}^N$ satisfying \eqref{eq:interp:segments_disjoint}. Without loss of generality, we may assume $\bfx_i\neq\bfy_i$ for all $i$. Let $\bfu_i \coloneqq (\bfy_i-\bfx_i)/T$ for each $i \in [N]$. The curve $\gamma_i(t) \coloneqq \bfx_i + t\bfu_i$ for $t \in [0,T]$ traces exactly the segment $S_i \coloneqq [\bfx_i,\bfy_i]$. 
Since all $S_i$ are compact and pairwise disjoint, the minimum distance between them is positive:
\[
\delta \coloneqq \min_{i\neq j} \dist(S_i,S_j) > 0.
\]
We construct disjoint neighborhoods around each $S_i$ as follows. For each $i$, fix an orthonormal basis $(\bfe_{i,1},\dots,\bfe_{i,d})$ with $\bfe_{i,1}=\tfrac{\bfy_i-\bfx_i}{\|\bfy_i-\bfx_i\|}$. We define for each $\rho>0$,
\[
K_{i,\rho} \coloneqq \left\{\bfx\in\R^d : -\rho\le (\bfx-\bfx_i)\cdot \bfe_{i,1}\le \|\bfy_i-\bfx_i\|+ \rho \quad\text{and}\quad |(\bfx-\bfx_i)\cdot \bfe_{i,k}|\le \rho \quad\text{for all } k\ge2\right\}.
\]
Set $r \coloneqq \delta/(4\sqrt{d})$. By definition, every $\bfx\in S_i$ satisfies
\[
0\le (\bfx-\bfx_i)\cdot \bfe_{i,1}\le \|\bfy_i-\bfx_i\|,
\qquad
(\bfx-\bfx_i)\cdot \bfe_{i,k}=0 \quad\text{for all } k\ge2.
\]
Hence $S_i \subset K_{i,r/2} \subset \mathrm{int}(K_{i,r})$. Moreover, the maximum distance from any $\bfx \in K_{i,r}$ to $S_i$ is $\leq \sqrt{r^2 + (d-1)r^2} = r\sqrt{d} = \delta/4$, so $K_{i,r}$ lies entirely within the $\delta/4$-neighborhood of $S_i$. 

By the triangle inequality, for any $i \neq j$,
\[
\dist(K_{i,r},K_{j,r}) \ge \dist(S_i,S_j) - 2(\delta/4) \ge \delta - \delta/2 = \delta/2 > 0.
\]
Hence, $K_{1,r},\dots,K_{N,r}$ are pairwise disjoint.

We now construct the vector field. For each $i \in [N]$, we apply \Cref{lem:compact_support} to the constant field $g_i(\bfx)\equiv \bfu_i$---which is trivially of the form \eqref{eq:aNODE}---setting the inner region to $K_{i,r/2}$ and the outer region to $K_{i,r}$. This yields a localized field $v_i$ of the form \eqref{eq:2layerANODE} such that
\[
v_i(\bfx)=\bfu_i \quad\text{for all } \bfx\in K_{i,r/2},
\qquad
v_i(\bfx)=0 \quad\text{for all } \bfx\in \R^d\setminus K_{i,r}.
\]
Define the global vector field $v_\theta(\bfx)\coloneqq \sum_{i=1}^N v_i(\bfx)$.
Since the neighborhoods $K_{1,r}, \dots, K_{N,r}$ are pairwise disjoint, for any fixed $i \in [N]$ we have
\[
v_\theta(\bfx) = v_i(\bfx) + \sum_{j\neq i} \mathbf{0} = \bfu_i \qquad \text{for all } \bfx\in K_{i,r/2}.
\]
Recall that the trace of the trajectory is $\gamma_i([0,T]) = S_i \subset K_{i,r/2}$. Therefore, the ODE dynamics exactly follow the constant velocity: $\dot\gamma_i(t) = v_\theta(\gamma_i(t)) = \bfu_i$. Integrating this yields $\Phi_T^\theta(\bfx_i) = \gamma_i(T) = \bfy_i$. Since this holds for all $i\in[N]$, we deduce that $\Phi_T^\theta$ perfectly interpolates the dataset.

Finally, we determine the architecture. By the explicit construction in \Cref{lem:compact_support}, since $g_i(\bfx) \equiv \bfu_i$ relies on a single constant term, the localized field takes the form
\[
v_i(\bfx) = \bfu_i \left( 1 - \kappa_i B_i(\bfx) \right)_+ ,
\]
where $B_i(\bfx)$ is the barrier function. Because $K_{i,r}$ is defined by bounding the $d$ projections onto the basis $\bfe_{i,k}$, the barrier $B_i$ is a sum of exactly $2d$ ReLU terms (one for each upper and lower bound). Consequently, the global field $v_\theta$ exactly matches the two-layer architecture \eqref{eq:2layerANODE} with $p_1=2d$ and $p_2=N$.
\end{proof}

\subsubsection{Proof of Theorem \ref{thm:cell_ctrl}}\label{ss:cell.ctrl}

We first state some auxiliary lemmas. The first establishes that we can construct a smooth, compactly supported, time-varying vector field that steers each $A_k$ to its target ball. We then give an explicit estimate of its Barron norm as a function of $\eta$ and of the geometry of the sets and connecting trajectories.  To this end, for a compactly supported vector field $v\in \mathscr C_c^\infty(\R\times\R^d;\R^d)$, we define the Barron norm 
as
\begin{equation}\label{eq:barron.norm}
\|v\|_{\mathcal B_2^{d+1}} \coloneqq \sum_{\ell=1}^d \int_{\R\times\R^d} \|(\tau,\omega)\|_1^2\,|\widehat{v^{(\ell)}}(\tau,\omega)|\diff\tau\diff\omega,
\end{equation}
where $\hat{v}$ is the Fourier transform of $v$.
This definition and the subsequent lemma build upon the framework of \cite[Theorem 2]{Klusowski2018Approximation}, which we adapt to the setting of time-dependent vector fields.

\begin{figure}[t!]
    \centering
    \includegraphics[width=0.3\linewidth]{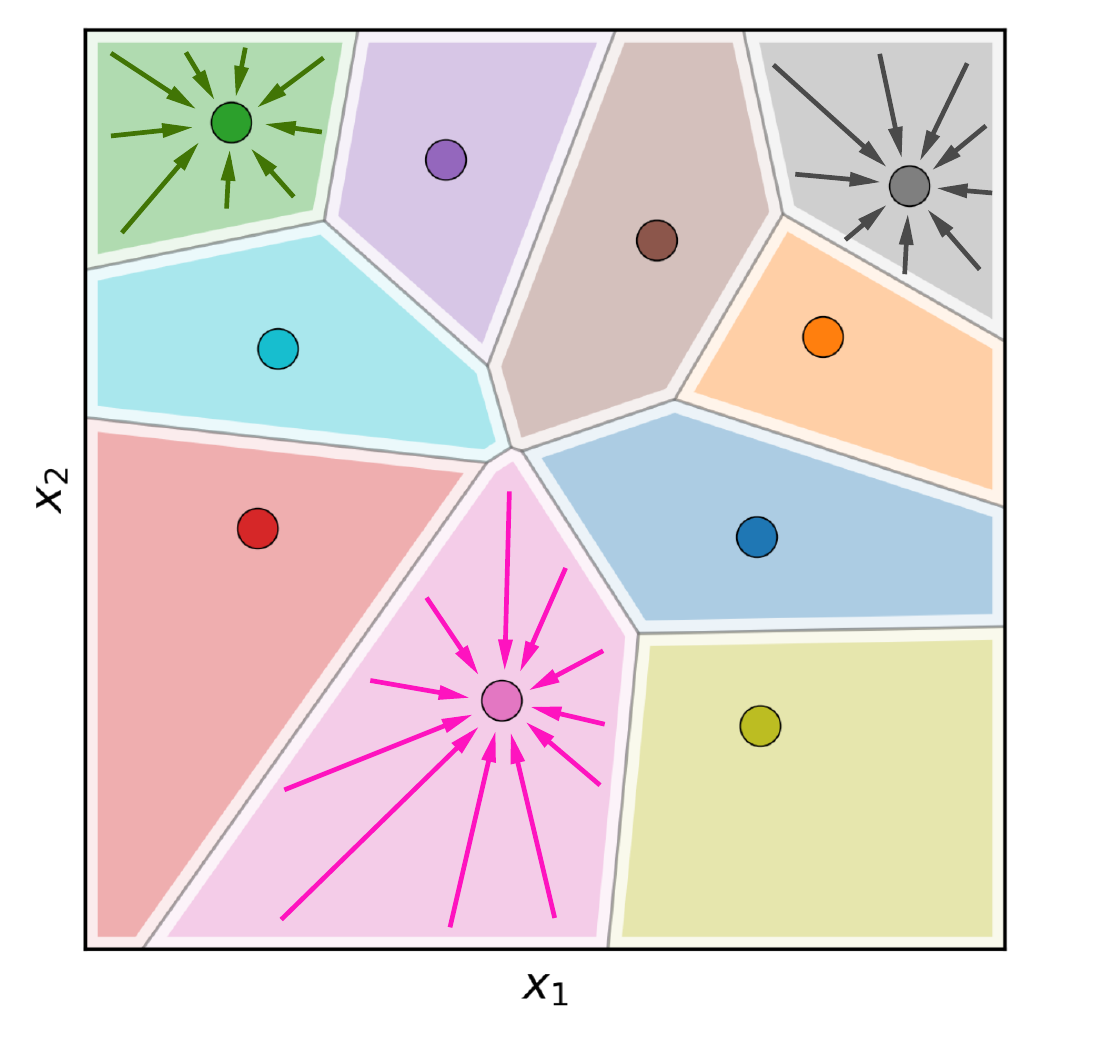}
    \qquad \qquad 
    \includegraphics[width=0.3\linewidth]{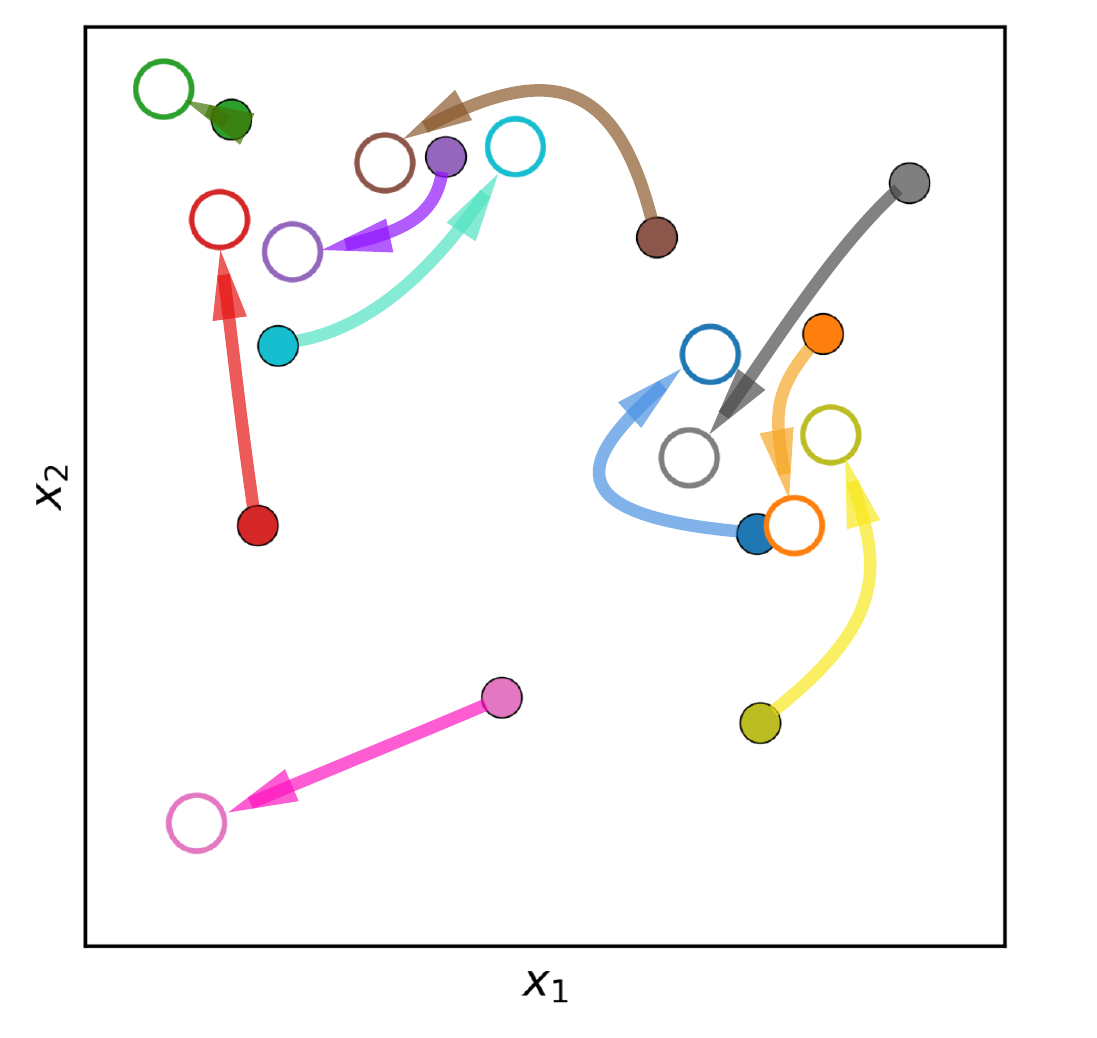}
    
    \caption{Two-step construction of the steering vector field. \textbf{Left:} Step 1 shows the initial pairwise disjoint convex sets $A_k$. The flow exponentially contracts these sets into small balls of radius $\delta$ centered around $\bfx_k$. \textbf{Right:} Step 2 depicts the rigid transport of these contracted balls along smooth, collision-free paths $\gamma_k(t)$ towards the target balls.}
    \label{fig:steering_corridors}
\end{figure}

\begin{lemma}[Steering vector field]\label{lem:steering-corridors}
Let $d\ge2$, $K\ge2$ and $T>0$. Let $\{A_k\}_{k=1}^K$ be a family of
pairwise disjoint compact convex subsets of $\R^d$, and let
$\{\bfr_k\}_{k=1}^K\subset\R^d$ be pairwise distinct. Then, for every $\eta>0$, there exists a vector field $v\in \mathscr C_c^\infty(\R\times\R^d;\R^d)$ whose flow map $\Phi_T:\R^d\to\R^d$ from time $0$ to time $T$ satisfies
\begin{equation}\label{eq:final-inclusion-claim}
 \Phi_T(A_k)\subset B(\bfr_k,\eta)
\qquad\text{for all }k\in[K].   
\end{equation}
\end{lemma}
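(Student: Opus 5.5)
The construction has two phases, following \Cref{fig:steering_corridors} and the structure in \Cref{rem:Cgeo_cell_ctrl}: a contraction phase on $[0,T/3]$ and a transport phase on $[T/3,T]$. On $[T/3,2T/3]$ the field will vanish near the relevant sets, which keeps the curves $\gamma_k$ constant on $[0,2T/3]$.

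\textbf{Phase 1 (contraction).} Pick a point $\bfx_k\in A_k$ for each $k$, and set $s_*=\min_{i\neq j}\dist(A_i,A_j)>0$, which is positive since the sets are compact and disjoint. Let $U_k$ be the open $s_*/3$-neighborhood of $A_k$; these are pairwise disjoint and convex. Take smooth cutoffs $\chi_k\in\mathscr C_c^\infty(U_k)$ with $\chi_k\equiv1$ on the $s_*/6$-neighborhood of $A_k$. On $[0,T/3]$ use the field
\[
v(t,\bfx)=-\lambda\,\phi(t)\sum_k\chi_k(\bfx)(\bfx-\bfx_k),
\]
where $\phi\ge0$ is a smooth bump supported in $(0,T/3)$ with $\int\phi=1$. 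The key observation is that on the region where $\chi_k\equiv1$ this is the linear field $-\lambda\phi(t)(\bfx-\bfx_k)$. Its flow is a homothety centered at $\bfx_k$ with ratio $e^{-\lambda\int_0^t\phi}\in(0,1]$. By convexity, $A_k$ and its image under any such homothety stay inside $\mathrm{conv}(A_k)=A_k$. So trajectories starting in $A_k$ never leave the region where $\chi_k\equiv1$. At time $T/3$ we get $\Phi_{T/3}(A_k)\subset B(\bfx_k,e^{-\lambda}\diam A_k)$, and we choose $\lambda$ so that this radius is at most $\rho\coloneqq\min\{\eta,m\}/4$. Here $m$ is the minimal separation of the curves built next.

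\textbf{Phase 2 (transport).} Because $d\ge2$, the configuration space of $K$ distinct points in $\R^d$ is path-connected. Hence there are smooth curves $\gamma_k:[0,T]\to\R^d$ with the following properties:
\begin{itemize}[leftmargin=2em]
\item $\gamma_k\equiv\bfx_k$ on $[0,2T/3]$ and $\gamma_k(T)=\bfr_k$;
\item $\gamma_i(t)\neq\gamma_j(t)$ for all $t$ and all $i\neq j$; this uses that the $\bfr_k$ are distinct and the $\bfx_k$ are distinct;
\item all derivatives of $\gamma_k$ vanish at $2T/3$.
\end{itemize}
The curves can be obtained as polygonal paths that are perturbed to avoid collisions, smoothed, and then reparametrized through a function that is flat at the endpoints. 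Such paths can also be kept in a bounded region, as stated in \Cref{rem:Cgeo_cell_ctrl}. Let $m=\min_t\min_{i\ne j}\|\gamma_i(t)-\gamma_j(t)\|>0$. On $[2T/3,T]$ define
\[
v(t,\bfx)=\sum_k\psi_k(\bfx-\gamma_k(t))\,\dot\gamma_k(t),
\]
with $\psi_k\in\mathscr C_c^\infty(B(0,m/2))$ equal to $1$ on $B(0,m/4)$. The supports are disjoint because of the separation $m$. On $B(\gamma_k(t),m/4)$ the field equals the constant $\dot\gamma_k(t)$, so this ball is rigidly translated along $\gamma_k$. Since $\rho\le m/4$, the ball $B(\bfx_k,\rho)$ is carried onto $B(\bfr_k,\rho)\subset B(\bfr_k,\eta)$.

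\textbf{Gluing and smoothness.} Set $v\equiv0$ on $[T/3,2T/3]$, and multiply the Phase 1 field by a time cutoff. Both pieces vanish to all orders near the junction times, because of $\phi$ and the flatness of $\dot\gamma_k$. The glued field is therefore smooth on $[0,T]\times\R^d$, and it can be extended by zero in $t$. It also has compact support, since the $U_k$ and the curves are bounded. Composing the phases gives \eqref{eq:final-inclusion-claim}.

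\textbf{Main obstacle.} The delicate step is Phase 1: we must ensure that cutting off the linear contraction does not let trajectories leave the region where the field is exactly linear. Convexity of $A_k$ combined with homotheties toward an interior point $\bfx_k\in A_k$ settles this. The existence of non-colliding smooth paths in Phase 2 is standard for $d\ge2$, but it has to be justified with a little care.
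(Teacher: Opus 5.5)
Your proposal is correct and follows essentially the same construction as the paper: a cut-off linear contraction toward a point $\bfx_k\in A_k$, whose trajectories stay in $A_k$ (where the cutoff is $\equiv 1$) by convexity, followed by rigid transport of the resulting small balls along non-colliding smooth curves $\gamma_k$ that are constant on $[0,2T/3]$, using localized fields equal to $\dot\gamma_k(t)$ near $\gamma_k(t)$. The differences are cosmetic. You schedule the phases with a time bump supported in $(0,T/3)$ and a rest interval, rather than the paper's blending $(1-\varphi)\lambda u+\varphi w$; this lets you extend by zero in time, although for $t>T$ this relies on your reparametrization also being flat at $T$. You also justify the curves via path-connectedness of the configuration space, which needs only simultaneous non-collision rather than disjoint traces.
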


\begin{proof}[Proof of \Cref{lem:steering-corridors}]
We first assume that the target points $\bfr_1,\dots,\bfr_K$ are pairwise
distinct. Set
\begin{equation}\label{eq:steering-geom-data}
D_*\coloneqq \max_{k\in[K]}\diam(A_k),
\qquad
s_*\coloneqq \min_{i\neq j}\dist(A_i,A_j)>0.
\end{equation}
By definition of $s_*$, the open neighborhoods 
\begin{equation}\label{eq:Vk-def}
V_k\coloneqq\left\{\bfx\in\R^d:\dist(\bfx,A_k)<\frac{s_*}{8}\right\}=A_k+\left(\frac{s_*}{8}\right) B(0,1)
\end{equation}
are pairwise disjoint. Since $d\ge2$, we may choose arbitrary points $\bfx_k\in A_k$ and connect each $\bfx_k$ to its target $\bfr_k$ via pairwise disjoint polygonal arcs. This is a standard topological consequence of working in dimensions $d \ge 2$, where finitely many paths can avoid intersections by introducing small perturbations. By smoothing the corners of these arcs and parameterizing them appropriately over time, we obtain smooth, pairwise disjoint curves $\gamma_k:[0,T]\to\R^d$ such that
\begin{equation}\label{eq:traj}
 \gamma_k(0)=\bfx_k,\quad \gamma_k\text{ is constant on }[0, 2T/3],\quad \gamma_k(T)=\bfr_k   
\end{equation}
and, by ensuring the polygonal arcs are contained within a reasonably tight bounding ball before smoothing, we also satisfy
\begin{equation}\label{eq:bd.traj}
\max_{t\in[0,T]}\|\gamma_k(t)\|< 1+\max_{k\in[K]}\|\mathbf{r}_k\|+\max_{k\in[K]}\max_{\bfx\in A_k}\|\bfx\|.    
\end{equation}

Our proof is constructive. In particular, the final vector field will be the combination of a \emph{compressing} vector field, collapsing each $A_k$ to a small ball, and a \emph{transporting} vector field, driving these balls to the targets $\bfr_k$. \Cref{fig:steering_corridors} summarizes these steps visually. Accordingly, we divide this proof into three parts.

\paragraph{Compression.} We fix a standard nonnegative radial mollifier $\psi \in \mathscr{C}_c^\infty(\R^d)$, with support in $B(0,1)$ and $\int\psi=1$, and set
\begin{equation}\label{eq:scaled-mollifier}
\psi_{s_*/32}(\bfx)\coloneqq \left(\frac{32}{s_*}\right)^{d}\psi\left(\frac{32}{s_*}\bfx\right).
\end{equation}
For each $k\in[K]$, define the smooth cutoff
\begin{equation}\label{eq:chi-k-def}
\chi_k\coloneqq \mathbf 1_{A_k+(s_*/32) B(0,1)} * \psi_{s_*/32}
\end{equation}
so that $\chi_k\in \mathscr C_c^\infty(\R^d;[0,1])$ with $\chi_k\equiv 1$ on $A_k$ and $\supp(\chi_k)\subset A_k+(s_*/16)B(0,1) \subset V_k$. Since the supports of $\chi_k$ are pairwise disjoint, the vector field
\begin{equation}\label{eq:u-def}
u(\bfx)\coloneqq\sum_{k=1}^K \chi_k(\bfx)(\bfx_k-\bfx)
\end{equation}
is well-defined and satisfies $u\in \mathscr C_c^\infty(\R^d;\R^d)$. On each $A_k$, \eqref{eq:u-def} simplifies exactly to $u(\bfx)=\bfx_k-\bfx$. 


\paragraph{Transport.} Let $m$ be defined as 
\[m\coloneqq \min_{t\in[0,T]}\min_{i\neq j}\|\gamma_i(t)-\gamma_j(t)\|,\] and denote 
\begin{equation}\label{eq:lambdadelta-def}
 \delta\coloneqq\min\left\{\eta,\frac{s_*}{8},\frac{m}{8}\right\},\qquad \lambda\coloneqq\frac{3}{T}\log_+\left(\frac{D_*}{\delta}\right)\ge0.
\end{equation}
Furthermore, let $\xi\in\mathscr C_c^\infty(\R^d;[0,1])$ be a smooth radial cutoff with $\xi\equiv 1$ on $B(0,1)$ and $\supp \xi\subset B(0,2)$, and define
\begin{equation}\label{eq:vT-def}
w(t,\bfx)\coloneqq \sum_{k=1}^K \dot{\gamma}_k(t)\,
\xi\left(\frac{8(\bfx-\gamma_k(t))}{m}\right),
\qquad (t,\bfx)\in [0,T]\times\R^d .
\end{equation}
For each $t\in[0,T]$, the spatial support of the $k$-th summand is contained in $B(\gamma_k(t),m/4)$, and these balls are pairwise disjoint. Moreover, $w(t,\bfx)=\dot{\gamma}_k(t)$ whenever $\|\bfx-\gamma_k(t)\|\le \frac{m}{8}.$

\paragraph{Concatenation.} 
To concatenate compression and transport, choose a nondecreasing $\varphi\in\mathscr{C}^\infty([0,T])$ with $\varphi(t)=0$ on $[0,T/3]$ and $\varphi(t)=1$ on $[2T/3,T]$, and define the time-dependent vector field
\begin{equation}\label{eq:v-tilde-def}
v_{[0,T]}(t,\bfx)\coloneqq (1-\varphi(t))\,\lambda u(\bfx)+\varphi(t)\,w(t,\bfx),
\qquad (t,\bfx)\in [0,T]\times\R^d .
\end{equation}
Then $v_{[0,T]}\in \mathscr C_c^\infty([0,T]\times\R^d;\R^d)$, and thus \eqref{eq:v-tilde-def} generates a unique flow $\Phi_t$ on $[0,T]$.

We verify that $\Phi_t$ satisfies the condition \eqref{eq:final-inclusion-claim}. 
Since $\gamma_k$ is constant on $[0,2T/3]$, we have $w\equiv 0$ on $[0,2T/3]\times\R^d$, and therefore \eqref{eq:v-tilde-def} reduces there to $v_{[0,T]}(t,\bfx)=\lambda(1-\varphi(t))u(\bfx).$ 
Besides, solving $\dot\bfx(t)=\lambda(1-\varphi(t))u(\bfx)$, we see that the trajectory of any initial point $\bfx(0)\in A_k$ satisfies
\begin{equation}\label{eq:compression.traj}
\Phi_t(\bfx(0))=\bfx_k+\exp\left(-\lambda\int_0^t (1-\varphi(s))\diff s\right)\left(\bfx(0)-\bfx_k\right)\in A_k \qquad \text{for } t\in[0,2T/3].    
\end{equation}
Since $\varphi(s)=0$ on $[0,T/3]$ and $\varphi\le 1$ everywhere, the integral satisfies $\int_0^{2T/3}(1-\varphi(s))\diff s \ge T/3$. Recalling our choice of $\lambda$ in \eqref{eq:lambdadelta-def}, this guarantees that at $t=2T/3$,
$$
\|\Phi_{2T/3}(\bfx(0))-\bfx_k\| \le e^{-\lambda(T/3)}\|\bfx(0)-\bfx_k\| \le e^{-\lambda(T/3)}D_* \le \delta.
$$
Consequently, we obtain
\begin{equation}\label{eq:state-at-2T3}
\Phi_{2T/3}(A_k)\subset B(\bfx_k,\delta).
\end{equation}
On $[2T/3,T]$, we have $\varphi\equiv 1$, so $v_{[0,T]}=w$. Let
$z_0\in B(\bfx_k,\delta)$, and write $z_0=\bfx_k+\bfy=\gamma_k(2T/3)+\bfy$ with
$\|\bfy\|\le\delta\le m/8$. The path $z(t)\coloneqq\gamma_k(t)+\bfy$ stays in the
region where $w(t,\cdot)=\dot\gamma_k(t)$, hence solves the ODE; by uniqueness, the
flow starting from $z_0$ equals $z(t)$, and in particular reaches $\bfr_k+\bfy$ at
$t=T$. Combined with \eqref{eq:state-at-2T3}, this gives
\[\Phi_T(A_k)\subset B(\bfr_k,\delta)\subset B(\bfr_k,\eta).\]
To conclude, by a standard smooth extension-and-cutoff argument in the time variable, there exists a global field
\begin{equation}\label{eq:global.field}
 v\in \mathscr C_c^\infty(\R\times\R^d;\R^d)\qquad\text{such that }v=v_{[0,T]}
\qquad\text{on }[0,T]\times\R^d,   
\end{equation}
Since $v=v_{[0,T]}$ on $[0,T]\times\R^d$, the flow of $v$ on $[0,T]$ coincides with $\Phi_t$ as above. In particular,~\eqref{eq:final-inclusion-claim}~holds.

\end{proof}

\begin{lemma}[Quantitative estimates on the steering field]\label{lem:steering-estimates}

Let \(v=v_\eta\) be the vector field constructed in the proof of \Cref{lem:steering-corridors} with tolerance \(\eta\) when $\{\bfr_k\}_{k=1}^K\subset\R^d$ are pairwise distinct (see \eqref{eq:global.field}) and let $\Phi_t$ be its flow on $[0,T]$. Define  the minimum path separation $m$ and the maximum path derivative $G$ associated with the trajectories $\gamma_k$ as 
\begin{equation}\label{eq:steering-mGxi}
m\coloneqq \min_{t\in[0,T]}\min_{i\neq j}\|\gamma_i(t)-\gamma_j(t)\|,
\qquad
G\coloneqq 1+\max_{1\le j\le d+5}\max_{k\in[K]}\left\|\frac{\diff^j \gamma_k}{\diff t^j}\right\|_{L^\infty(0,T)},
\end{equation}
and let $\delta$ be as in \eqref{eq:lambdadelta-def}. Then there exists a constant $C_{d,T}>0$, independent of $\eta$ and $(A_k,\gamma_k)_k$, such that
\begin{align} \label{eq:lip.v.bound}
\sup_{t\in[0,T]}\Lip_\bfx(v(t,\cdot))
\le
C_{d,T}\left[
\frac{G}{m}+\frac{1+D_*/s_*}{T}\log_+\left(\frac{D_*}{\delta}\right)
\right].
\end{align}
Moreover, the Barron norm of $v$, as defined in \eqref{eq:barron.norm}, satisfies
\begin{align} \label{eq:steering-barron-bound}
\|v\|_{\mathcal B_2^{d+1}}
\le
C_{d,T}\,K\left[
T G^{d+5}(m^d+m^{-4})
+
\frac{(D_*+s_*)^{d+1}(1+s_*^{-(d+4)})}{T} \log_+\left(\frac{D_*}{\delta}\right)
\right].
\end{align}
Finally, for $R>0$ large enough so that $[-R,R]^d\supset \bigcup_{k\in[K]}A_k$, we also have:
\begin{align} \label{eq:Linftybd}
\max_{(t,\bfx)\in[0,T]\times[-R,R]^d}\|\Phi_t(\bfx)\|
\le1+\max_{k\in[K]}\|\bfr_k\|+\frac32\sqrt{d}\,R.
\end{align}
\end{lemma}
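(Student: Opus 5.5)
The plan is to bound the three estimates separately by working directly from the explicit formula \eqref{eq:v-tilde-def}. On $[0,T]$ we have $v=(1-\varphi)\lambda u+\varphi w$, and after the time extension in \eqref{eq:global.field} both pieces can be treated as compactly supported functions of $(t,\bfx)$. The global extension will be chosen so that it is supported in a fixed time window, say $[-T,2T]$, and its $\mathscr C^{d+5}$ norms are controlled by those of $\gamma_k$ and $\varphi$. This only costs a factor $C_{d,T}$. Throughout, the disjointness of the supports is what lets each sum over $k$ be controlled by its worst summand for the Lipschitz bound, and by $K$ times that summand for the Barron bound.

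\textbf{Lipschitz bound.} On $\supp\chi_k$ we write $\nabla u=\nabla\chi_k\otimes(\bfx_k-\bfx)-\chi_k I$.
\begin{itemize}
\item Since $\|\nabla\chi_k\|_\infty\lesssim s_*^{-1}$ and $\|\bfx_k-\bfx\|\le D_*+s_*/16$ on this support, we get $\Lip(u)\lesssim 1+D_*/s_*$.
\item Multiplying by $\lambda=\frac3T\log_+(D_*/\delta)$ gives the second term in \eqref{eq:lip.v.bound}.
\item For $w$, each summand has spatial gradient $\dot\gamma_k\otimes\frac8m\nabla\xi$, so $\Lip(w(t,\cdot))\lesssim G/m$.
\end{itemize}

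\textbf{$L^\infty$ bound \eqref{eq:Linftybd}.} We first use \eqref{eq:compression.traj}, noting that the compression only moves points of $V_k$ along segments toward $\bfx_k$ inside $\supp\chi_k$, and points outside $\bigcup_k V_k$ do not move. Hence on $[0,2T/3]$ every trajectory stays in $I_R+(s_*/16)B(0,1)$, whose norm is at most $\sqrt d R+s_*/16$. On $[2T/3,T]$ a point either stays fixed or lies in some $B(\gamma_k(t),m/4)$. By \eqref{eq:bd.traj} it then has norm below $1+\max\|\bfr_k\|+\sqrt dR+m/4$. It remains to absorb $s_*/16$ and $m/4$ into $\tfrac12\sqrt dR$. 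This uses $s_*\le 2\sqrt dR$, and we may choose $\gamma_k$ with $m\lesssim\sqrt d R$; shrinking $m$ is allowed, since $m$ only appears as a lower bound on separations.

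\textbf{Barron bound (main obstacle).} This is the hardest step. We use the standard estimate
\[
\int\|\zeta\|_1^2|\hat f(\zeta)|\diff\zeta\lesssim_d \|\,\|\zeta\|^{2}(1+\|\zeta\|^{2})^{(d+3)/2}\hat f\|_{L^2}\le C_d\|f\|_{H^{d+3}(\R^{d+1})},
\]
which follows from Cauchy--Schwarz against $(1+\|\zeta\|^2)^{-(d+1)/2-\epsilon}$. The $H^{d+3}$ norm of each summand is then bounded by $\|f\|_{H^s}\le|\supp f|^{1/2}\max_{|\beta|\le s}\|\partial^\beta f\|_\infty$, so everything reduces to counting derivatives.

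For the transport piece $\varphi(t)\dot\gamma_k(t)\xi(8(\bfx-\gamma_k(t))/m)$:
\begin{itemize}
\item the space-time support has measure $\lesssim Tm^d$;
\item a mixed derivative of order up to $d+3$ produces at most $G^{d+4}$ from the $\gamma_k$ derivatives, together with powers $m^{-j}$ for $j\le d+3$.
\end{itemize}
Combining these crudely (for instance using $m^{d/2-j}\le m^d+m^{-4}$ after interpolation), one obtains the shape $TG^{d+5}(m^d+m^{-4})$. If the exponents do not match exactly, we would settle for the crude form, with the exponents dictated by the $H^{d+3}$ count.

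For the compression piece $(1-\varphi(t))\lambda\chi_k(\bfx)(\bfx_k-\bfx)$:
\begin{itemize}
\item the time factor contributes $C_{d,T}$;
\item the spatial support has measure $\lesssim(D_*+s_*)^d$;
\item derivatives satisfy $|\partial^\beta\chi_k|\lesssim s_*^{-|\beta|}$ and are multiplied by $|\bfx_k-\bfx|\lesssim D_*+s_*$;
\item the powers $s_*^{-j}$ with $j\le d+4$ are bounded by $1+s_*^{-(d+4)}$.
\end{itemize}
With the factor $\lambda$ this gives the second term of \eqref{eq:steering-barron-bound}, up to adjusting powers of $(D_*+s_*)$ in the crude bound.

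Summing over the $K$ disjoint summands yields the overall factor $K$. Keeping the exponents consistent is the most delicate bookkeeping in the argument, but the constants are explicitly allowed to depend on $d$ and $T$.
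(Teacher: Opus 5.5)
Your Lipschitz bound and your $L^\infty$ bound follow the paper's argument. In the $L^\infty$ bound, $m\le\|\gamma_i(0)-\gamma_j(0)\|\le\diam(I_R)=2\sqrt d\,R$ holds automatically because $\gamma_k(0)=\bfx_k\in A_k\subset I_R$. So nothing needs adjusting, and $m$ is not free to shrink anyway: it is fixed by the curves and enters the construction of $w$ and $\delta$.

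The genuine gap is in the Barron estimate. Measuring $v$ in $H^{d+3}(\R^{d+1})$ pairs only the \emph{square root} of the support volume with up to $d+3$ derivatives.
\begin{itemize}
\item For a transport bump this gives terms $T^{1/2}m^{d/2-j}$ with $j\le d+3$, so the worst term is $m^{-(d/2+3)}$.
\item For a compression piece it gives $\lambda(D_*+s_*)^{d/2+1}(1+s_*^{-(d+3)})$.
\end{itemize}
For $d\ge3$ neither is controlled by \eqref{eq:steering-barron-bound} with a constant independent of $(A_k,\gamma_k)_k$. The inequality $m^{d/2-j}\le m^d+m^{-4}$ that you invoke is false for $j=d+3$ as soon as $d/2+3>4$. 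With $D_*=s_*=\epsilon\to0$, your compression term is of order $\epsilon^{-d/2-2}$, against $\epsilon^{-3}$ in the lemma. No interpolation can repair this: the $\dot H^{j}$ seminorm of a bump at scale $m$ genuinely scales like $m^{d/2-j}$, so the $H^{d+3}$ norm really is that large. As written, you prove a bound of a different shape, not the stated one. You concede as much when you say you would ``settle for the crude form''.

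The remedy is to change the function space, not the bookkeeping.
\begin{itemize}
\item \emph{The paper's route.} It uses an $L^1$-based embedding. Since $\|\zeta\|_1^2(1+\|\zeta\|_1)^{-(d+4)}$ is integrable on $\R^{d+1}$, one has $\int\|\zeta\|_1^2|\hat g|\lesssim\sup_\zeta(1+\|\zeta\|_1)^{d+4}|\hat g(\zeta)|\lesssim\sum_{|\alpha|\le d+4}\|\partial^\alpha g\|_{L^1}$. The $L^1$ norms carry the \emph{full} volume $m^d$ (resp.\ $(D_*+s_*)^d$). Hence the powers of $m$ land in $[-4,d]$, and the compression term comes out exactly as $(D_*+s_*)^{d+1}(1+s_*^{-(d+4)})$.
\item \emph{Your route, repaired.} You can keep the $L^2$ approach with the smallest admissible order. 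Cauchy--Schwarz in $\R^{d+1}$ only requires $s>(d+5)/2$. For any integer $s\in((d+5)/2,\,d/2+4]$ the exponents satisfy $d/2-j\ge-4$. Moreover, $(D_*+s_*)^{d/2+1}(1+s_*^{-s})\le 2(D_*+s_*)^{d+1}(1+s_*^{-(d+4)})$; to see this, use $s_*\le D_*+s_*$ and treat $s_*<1$ and $s_*\ge1$ separately. Your choice $s=d+3$ lies in this window only when $d=2$.
\end{itemize}
Separately, the middle term of your displayed Fourier estimate should carry the exponent $(d+1)/2$, not $(d+3)/2$, to be consistent with the $H^{d+3}$ norm on the right.
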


\begin{proof}
Throughout the proof, $C_d$ and $C_{d,T}$ denote positive constants that may change from line to line and depend only on $d$, $T$, and on the fixed smooth profiles $\psi$, $\xi$, and $\varphi$.

\paragraph{Lipschitz estimate \eqref{eq:lip.v.bound}.} Using the decomposition \eqref{eq:v-tilde-def} and the bound $0\le \varphi\le 1$, we obtain
\begin{equation}\label{eq:Lip.decomp}
\sup_{t\in[0,T]}\Lip_\bfx(v(t,\cdot))
\le
\lambda \|Du\|_{L^\infty(\R^d)}   +
\sup_{t\in[0,T]}\|D_\bfx w(t,\cdot)\|_{L^\infty(\R^d)}. 
\end{equation}
Recall the definition of $u$ in \eqref{eq:u-def}. Writing $\chi_k=\mathbf 1_{E_k}*\psi_{s_*/32}$ with $E_k\coloneqq A_k+(s_*/32)B(0,1)$, by the classical Young's convolution inequality, we have 
\begin{equation}
\label{temp}
\|\nabla\chi_k\|_{L^\infty(\R^d)} = \|\mathbf{1}_{E_k} * \nabla\psi_{s_*/32}\|_{L^\infty(\R^d)}\le\|\mathbf{1}_{E_k}\|_{L^\infty}\|\nabla\psi_{s_*/32}\|_{L^1} \le C_d\,s_*^{-1}.
\end{equation}
Since the functions $\chi_k$ have pairwise disjoint supports, and $\|\bfx - \bfx_k\| \le D_* + s_*/16$ for  $\bfx\in\supp(\chi_k)$, we obtain
\begin{equation}\label{eq:Duinfty}
\|Du\|_{L^\infty(\R^d)}\le C_d\left(1+\frac{D_*}{s_*}\right).    
\end{equation}
For the second term, differentiating \eqref{eq:vT-def} gives:
\[
\partial_{x_j} w^{(\ell)}(t,\bfx)
=
\sum_{k=1}^K \dot\gamma_k^{(\ell)}(t)\,\frac{8}{m}\,
\partial_j \xi\left(\frac{8(\bfx-\gamma_k(t))}{m}\right), \quad \text{ for all }j,\ell\in[d].
\]
For any fixed $t$, the support of the $k$-th summand is contained in $B(\gamma_k(t),m/4)$. These balls are pairwise disjoint for all $k$, meaning at most one summand is nonzero at any given $(t,\bfx)$. Since $|\dot\gamma_k(t)|\le G$ and $\|\nabla\xi\|_{L^\infty}<\infty$, we obtain
\begin{equation}\label{eq:Dwinfty}
\sup_{t\in[0,T]}\|D_\bfx w(t,\cdot)\|_{L^\infty(\R^d)}\le C_d\,\frac{G}{m}.    
\end{equation}
Substituting \eqref{eq:Duinfty}, \eqref{eq:Dwinfty}, and \eqref{eq:lambdadelta-def} into \eqref{eq:Lip.decomp} yields the desired bound \eqref{eq:lip.v.bound}.

\paragraph{Barron estimate.}  
Fix $n=d+4$ and $g\in\mathscr C_c^\infty(\R^{d+1})$. Since
$\frac{\|(\tau,\omega)\|_1^2}{(1+\|(\tau,\omega)\|_1)^n}$ is integrable on $\R^{d+1}$,
$$
\int_{\R^{d+1}}\|(\tau,\omega)\|_1^2\,|\widehat g(\tau,\omega)|\diff\tau\diff\omega
\le
C_d
\sup_{(\tau,\omega)\in\R^{d+1}}(1+\|(\tau,\omega)\|_1)^n\,|\widehat g(\tau,\omega)|.
$$
Expanding the weight as a sum of monomials and using the identity
$(\tau,\omega)^\alpha\widehat g=c_\alpha\widehat{\partial^\alpha g}$ together with
$\|\widehat h\|_{L^\infty}\le\|h\|_{L^1}$, we obtain
\begin{equation}\label{eq:fourier-sobolev}
\int_{\R^{d+1}}\|(\tau,\omega)\|_1^2\,|\widehat g(\tau,\omega)|\diff\tau\diff\omega
\le
C_d\sum_{|\alpha|\le n}\|\partial^\alpha g\|_{L^1(\R^{d+1})}.
\end{equation}
Moreover, for every component $\ell\in[d]$, the extension operator and the cutoff yield
\begin{equation}\label{eq:extension-L1}
\sum_{a+|\beta|\le n}\|\partial_t^a\partial_\bfx^\beta v^{(\ell)}\|_{L^1(\R\times\R^d)}
\le
C_{d,T}
\sum_{a+|\beta|\le n}\|\partial_t^a\partial_\bfx^\beta v_{[0,T]}^{(\ell)}\|_{L^1((0,T)\times\R^d)}.
\end{equation}
Applying \eqref{eq:fourier-sobolev} componentwise to $v$ and using \eqref{eq:extension-L1}, we obtain
\begin{equation}\label{eq:barron-reduction}
\|v\|_{\mathcal B_2^{d+1}}
\le
C_{d,T}
\sum_{\ell=1}^d \sum_{a+|\beta|\le n}
\|\partial_t^a\partial_\bfx^\beta v_{[0,T]}^{(\ell)}\|_{L^1((0,T)\times\R^d)}.
\end{equation}
Thus, it suffices to bound the $L^1$ derivatives of $v_{[0,T]}=v^C+v^T$, where
\[
v^C(t,\bfx)\coloneqq(1-\varphi(t))\,\lambda u(\bfx),
\qquad
v^T(t,\bfx)\coloneqq\varphi(t)\,w(t,\bfx).
\]
Since $\varphi$ is a fixed smooth profile, its derivatives up to order $n$ are bounded
in $L^\infty(0,T)$ and in $L^1(0,T)$ by a constant $C_{d,T}>0$; we will use this
repeatedly below without further mention.

\smallskip
\noindent
\emph{Compression.} Recalling the definition of $\chi_k$ at \eqref{eq:chi-k-def} and of $E_k$ above, Young's convolution inequality, combined with the isodiametric bound $|E_k|\le C_d(D_*+s_*)^d$ and the scaling $\|\partial^\gamma\psi_{s_*/32}\|_{L^1}\le C_d s_*^{-|\gamma|}$ yields, for every $|\gamma|\le n$,
\begin{equation}\label{eq:chi-L1}
\|\partial^\gamma\chi_k\|_{L^1(\R^d)}\le C_d\,(D_*+s_*)^d\,s_*^{-|\gamma|}.
\end{equation}
Applying the product rule to $u^{(\ell)}(\bfx)=\sum_k\chi_k(\bfx)(x_k^{(\ell)}-x^{(\ell)})$, and, using  $\|\bfx_k-\bfx\|\le D_*+s_*$ for $\bfx\in \supp(\chi_k)$, \eqref{eq:chi-L1} gives
\begin{equation}\label{eq:u-L1}
\sum_{|\beta|\le n}\|\partial^\beta u^{(\ell)}\|_{L^1(\R^d)}\le C_d\,K\,(D_*+s_*)^{d+1}\left(1+s_*^{-n}\right).
\end{equation}
Multiplying by the temporal factor $\lambda(1-\varphi)$ and using the uniform bound on the $L^1$-derivatives of $\varphi$, 
\begin{equation}\label{eq:compression-L1}
\sum_{a+|\beta|\le n}\|\partial_t^a\partial_x^\beta v^{C,(\ell)}\|_{L^1((0,T)\times\R^d)}\le C_{d,T}\,\lambda K\,(D_*+s_*)^{d+1}\left(1+s_*^{-n}\right).
\end{equation}

\smallskip
\noindent
\emph{Transport.} Differentiating $w^{(\ell)}(t,\bfx)=\sum_k\dot\gamma_k^{(\ell)}(t)\,\xi(8(\bfx-\gamma_k(t))/m)$ via the Leibniz and chain rules, every spatial derivative extracts a factor $m^{-1}$, and every time derivative either hits $\xi$ (extracting another $m^{-1}$) or lands on the prefactor $\dot\gamma_k^{(\ell)}$. Furthermore, we note that
\[\supp\xi(8(\,\cdot\,-\gamma_k(t))/m) \subset B(\gamma_k(t),m/4),\]
and that the latter are pairwise disjoint at each $t$. Thus, at most one summand contributes at any $(t,\bfx)$, so for all $a+|\beta|\le n$,
\begin{equation}\label{eq:pointwise-vT-summed}
\left|\partial_t^a\partial_x^\beta w^{(\ell)}(t,\bfx)\right|
\le
C_{d}\,G^{n+1}\sum_{r=0}^a m^{-(|\beta|+r)}\sum_{k=1}^K\mathbf 1_{B(\gamma_k(t),m/4)}(\bfx).
\end{equation}
Integrating over $(0,T)\times\R^d$, where each ball has volume $\lesssim m^d$, yields
\begin{equation}\label{eq:vT-L1}
\|\partial_t^a\partial_x^\beta w^{(\ell)}\|_{L^1((0,T)\times\R^d)}
\le C_{d,T}\,K\,G^{n+1}\sum_{r=0}^a m^{d-|\beta|-r}.
\end{equation}
Since $|\beta|+r\le n=d+4$, each exponent $d-|\beta|-r$ lies in $[-4,d]$, hence 
$m^{d-|\beta|-r}\le m^d+m^{-4}.$ Summing over $a+|\beta|\le n$ and absorbing the $\varphi$-factor from $v^T=\varphi\,w$, we obtain
\begin{equation}\label{eq:transport-L1}
\sum_{a+|\beta|\le n}\|\partial_t^a\partial_x^\beta v^{T,(\ell)}\|_{L^1((0,T)\times\R^d)}
\le C_{d,T}\,K\,G^{n+1}(m^d+m^{-4}).
\end{equation}
Combining \eqref{eq:compression-L1}, \eqref{eq:transport-L1}, and \eqref{eq:barron-reduction} with $n=d+4$, and setting $\lambda\le (3/T)\log_+(D_*/\delta)$ from \eqref{eq:lambdadelta-def}, proves \eqref{eq:steering-barron-bound}.

\paragraph{Uniform bound.}
Let $\bfx_0 \in [-R,R]^d$. For $t \in [0,2T/3]$,
we have $v(t,\bfx)=\lambda(1-\varphi(t))u(\bfx)$, since $w\equiv 0$ on
$[0,2T/3]\times\R^d$. The flow either fixes $\bfx_0$ or moves it along a straight segment within the convex set $[-R,R]^d$. Therefore, $\|\Phi_t(\bfx_0)\|\le \sqrt d\,R$ for all $t\in[0,2T/3]$.

For $t\in[2T/3,T]$ we have $v=w$, which vanishes outside
$\bigcup_{k=1}^K B(\gamma_k(t),m/4)$. Therefore, whenever $\Phi_t(\bfx_0)$ is not stationary, it lies in $B(\gamma_k(t),m/4)$ for some $k$, yielding
$$\|\Phi_t(\bfx_0)\| \le \|\gamma_k(t)\| + \frac{m}{4}.$$
By \eqref{eq:bd.traj} we have $\|\gamma_k(t)\| < 1+\max_{j\in[K]}\|\bfr_j\|+\sqrt d\,R$. Additionally, since $\gamma_k(0)\in A_k\subset[-R,R]^d$, we trivially have $m \le 2\sqrt d\,R$. Combining these estimates gives
$$\|\Phi_t(\bfx_0)\| \le 1+\max_{j\in[K]}\|\bfr_j\| + \frac{3}{2}\sqrt d\,R.$$
Since this bound dominates the compression phase and holds whenever the trajectory moves, we conclude
$$\max_{(t,\bfx)\in[0,T]\times[-R,R]^d}\|\Phi_t(\bfx)\| \le 1+\max_{k\in[K]}\|\bfr_k\|+\frac{3}{2}\sqrt d\,R,$$
which proves \eqref{eq:Linftybd}.
\end{proof}

The next lemma establishes an $O(p^{-1/2})$ approximation bound for the flow generated by \eqref{eq:saNODE}. The result relies on \cite[Theorem~2]{Klusowski2018Approximation}, which was subsequently adapted for dynamical systems in \cite{Li2024Universal}. Here, we extend \cite[Theorem~2.3]{Li2024Universal} by making the dependence on the Barron norm of the reference vector field explicit.  

\begin{lemma}\label{lem:barron_approximation}
Fix $d\ge2$ and $T>0$, let 
$v\in \mathscr C_c^\infty(\R\times\R^d;\R^d)$ with
$\|v\|_{\mathcal B_2^{d+1}}<\infty$, and let $\Omega\subset\R^d$ be compact.
Let $(\Psi_t)_{t\in[0,T]}$ be the flow induced on $[0,T]$ by
$\dot{\bfx}=v(t,\bfx)$, and assume that for some $R_\star\ge1$,
\begin{equation}\label{eq:barron_containment}
[0,T]\times \bigcup_{t\in[0,T]}\Psi_t(\Omega) \subset [-(R_\star-1),R_\star-1]^{d+1}.
\end{equation}
Then there exists a constant $C_d>0$, depending only on $d$, such that for every integer $p\ge3$ satisfying
\begin{equation}\label{eq:barron_p_cond}
p>C_d^2\,R_\star^4\,T^2\,\|v\|_{\mathcal B_2^{d+1}}^2
\exp\left(2T\,\sup_{t\in[0,T]}\Lip_\bfx (v(t,\cdot))\right),
\end{equation}
there exists a control $\theta$ of width $p$ for the field $v_\theta$ of the form \eqref{eq:saNODE} such that the flow $\Phi_t^\theta$ satisfies
\begin{equation}\label{eq:barron_bound}
\sup_{\bfx\in\Omega}\sup_{t\in[0,T]}
\|\Psi_t(\bfx)-\Phi_t^\theta(\bfx)\|
\le
\frac{C_d\,R_\star^2\,T\,\|v\|_{\mathcal B_2^{d+1}}}{\sqrt p}
\exp\left(T\,\sup_{t\in[0,T]}\Lip_\bfx (v(t,\cdot))\right).
\end{equation}
\end{lemma}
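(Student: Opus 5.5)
We will view the field $v$ as a function of the augmented variable $\bfz=(t,\bfx)\in\R^{d+1}$ and apply the uniform Barron approximation result of \cite[Theorem~2]{Klusowski2018Approximation} componentwise on the cube $[-R_\star,R_\star]^{d+1}$. That theorem states that a function $g$ with finite second-order spectral norm $\int\|\xi\|_1^2|\widehat g(\xi)|\diff\xi$ can be approximated uniformly on a bounded cube by a ReLU network $\sum_{i}w_i(\bfa_i\cdot\bfz+c_i)_+$ plus an affine term. Every ridge unit $(\bfa\cdot\bfx+bt+c)_+$ in the augmented variable is exactly one neuron of \eqref{eq:saNODE}. The sup error of the approximation is bounded by $C_d R_\star^2\|g\|/\sqrt p$, possibly with a logarithmic factor that we absorb into $C_d$ or avoid using the cleanest version of the bound.

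The affine part $\bfz\mapsto g(0)+\nabla g(0)\cdot\bfz$ is not literally of the form \eqref{eq:saNODE}. We will realize it with two extra ReLU units each, using $z=(z+M)_+-M$ with $M$ large on the cube, and a constant via $(M)_+$-type units, i.e.\ take $\bfa=0$, $b=0$, $c=M$. This costs $O(d)$ neurons, which is where the condition $p\ge3$ and the constant $C_d$ enter. The $d$ output components share neurons by concatenating the inner weights and stacking $\bfw_i\in\R^d$. The total error in $L^\infty([-R_\star,R_\star]^{d+1})$ is then $\eps_p\coloneqq C_dR_\star^2\|v\|_{\mathcal B_2^{d+1}}/\sqrt p$, after relabeling $p$ by a constant multiple.

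Next we run a Grönwall comparison on $[0,T]$. Let $L=\sup_t\Lip_\bfx v(t,\cdot)$ and $e(t)=\|\Psi_t(\bfx)-\Phi_t^\theta(\bfx)\|$ for $\bfx\in\Omega$. As long as $\Phi^\theta_t(\bfx)$ stays in $[-R_\star,R_\star]^d$, we have
\[
\dot e\le \|v(t,\Psi_t)-v(t,\Phi^\theta_t)\|+\|v(t,\Phi^\theta_t)-v_\theta(t,\Phi^\theta_t)\|\le Le+\eps_p,
\]
so that $e(t)\le T\eps_p e^{LT}$. The main obstacle is ensuring that $\Phi^\theta_t$ does not leave the cube where the approximation is valid, since $v_\theta$ is uncontrolled outside it. We will handle this with a bootstrap argument. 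Hypothesis \eqref{eq:barron_containment} places $\Psi_t(\Omega)$ at distance at least $1$ from the boundary. Condition \eqref{eq:barron_p_cond} is exactly $T\eps_pe^{LT}<1$. Let $t^*$ be the first exit time of $\Phi^\theta_t(\bfx)$ from the cube, and suppose $t^*<T$. On $[0,t^*]$ the estimate gives $e<1$, which contradicts exit, because $\Psi_t(\bfx)$ lies at distance at least $1$ inside. Hence the trajectory stays inside on all of $[0,T]$. Taking the supremum over $\bfx$ and $t$ yields \eqref{eq:barron_bound}. The time coordinate is automatically in range, since $t\in[0,T]\subset[-(R_\star-1),R_\star-1]$.
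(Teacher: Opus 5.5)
Your skeleton is the paper's proof: Klusowski--Barron's Theorem~2 applied componentwise to $v$ as a function of $\bfz=(t,\bfx)$ on $[-R_\star,R_\star]^{d+1}$, a Gr\"onwall estimate up to the first exit time of $\Phi^\theta_t(\bfx)$ from the cube, and the observation that \eqref{eq:barron_p_cond} is exactly $T\eps e^{LT}<1$. Together with the unit margin in \eqref{eq:barron_containment}, that rules out an exit before $T$, and this part is fine.

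The problem is the one place where you go beyond the paper, the affine term. You are right that Klusowski--Barron only approximate $v-v(0)-\nabla v(0)\cdot\bfz$, since their ridge units $(\bfa\cdot\bfz-t)_+$ with $t\ge0$ vanish at the origin. Your claim that the resulting $O(d)$ extra units are ``where the condition $p\ge3$ and the constant $C_d$ enter'' does not hold. The norm $\|v\|_{\mathcal B_2^{d+1}}$ controls neither $v(0)$ nor $\nabla v(0)$, so \eqref{eq:barron_p_cond} can hold with $p=3$ however large $d$ is. Then there are not enough units for the affine part, nor for giving each of the $d$ output components its own units. The paper's proof has the same blind spot: it never mentions the affine term, and its footnote tacitly needs $p\ge d$.

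This is not a bookkeeping slip, because the statement is false for $3\le p<d$. Take $d\ge4$, $T=1$, $\Omega=\overline{B(0,1)}$, $R_\star=3$, and $v(t,\bfx)=\eps\,\phi((t,\bfx)/\lambda)\,\bfx$. Here $\phi\in\mathscr C_c^\infty(\R^{d+1})$ is supported in $B(0,1)$ with $\phi\equiv1$ on $B(0,1/2)$, $\eps\le 1/10$, and $\lambda\ge10$.

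Then $\Psi_t(\bfx)=e^{\eps t}\bfx$ on $\Omega$ and $\sup_t\Lip_\bfx(v(t,\cdot))\lesssim\eps$. Each component has the form $\eps\lambda\,\psi(\bfz/\lambda)$ with $\psi$ a fixed test function, and the $\|\cdot\|_1^2$-weighted Fourier norm scales like $\lambda^{-2}$, so $\|v\|_{\mathcal B_2^{d+1}}\lesssim\eps/\lambda$. Hence for $\lambda$ large, \eqref{eq:barron_p_cond} holds for every $p\ge3$, and the right-hand side of \eqref{eq:barron_bound} is $O(\eps/\lambda)$.

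However, any width-$p$ field of the form \eqref{eq:saNODE} takes values in $W=\mathrm{span}\{\bfw_1,\dots,\bfw_p\}$, so $\Phi^\theta_1(\bfx)-\bfx\in W$. For a unit vector $\bfx\perp W$ this gives $\|\Psi_1(\bfx)-\Phi^\theta_1(\bfx)\|\ge e^{\eps}-1\ge\eps$, which contradicts the bound for $\lambda$ large.

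The fix is to assume $p\ge c_d$, for instance $p\ge 4d$. Reserve $d+2$ units for the affine part: one unit $(z_j+M)_+$ with $M\ge R_\star$ for each of the $d+1$ coordinates of $\bfz$, plus one constant unit with $\bfa=0$, $b=0$, $c=1$. Split the remaining $p-d-2\ge p/2$ units evenly among the $d$ components and absorb the resulting factors into $C_d$. With that amendment your argument is complete. Nothing downstream breaks, since enlarging $\mathfrak C$ in \Cref{thm:cell_ctrl} makes \eqref{eq:p_cell_ctrl_thm} force $p\ge\mathfrak C\ge4d$.
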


\begin{proof}
Denote $L \coloneqq \sup_{t\in[0,T]}\Lip_\bfx(v(t,\cdot))$.
By \cite[Theorem 2]{Klusowski2018Approximation}, there exists a control $\theta$
of width $p$ and a constant $C_d>0$ such that\footnote{\cite[Theorem 2]{Klusowski2018Approximation} is scalar-valued, but applying it componentwise and distributing the hidden units among the \(d\) components only changes the dimensional constant \(C_d\).}
\[
\sup_{(t,\bfx)\in[-R_\star,R_\star]^{d+1}}\|v(t,\bfx)-v_\theta(t,\bfx)\|
\le \frac{C_d\,R_\star^2\,\|v\|_{\mathcal B_2^{d+1}}}{\sqrt{p}} \eqqcolon \varepsilon.
\]
For any $\bfx\in\Omega$, let $\tau\le T$ be the maximal time such that
$\Phi_t^\theta(\bfx)\in[-R_\star,R_\star]^d$ for all $t\in[0,\tau]$.
On $[0,\tau]$, adding and subtracting $v(t,\Phi_t^\theta(\bfx))$ gives
\[
\|\Psi_t(\bfx)-\Phi_t^\theta(\bfx)\|
\le \int_0^t L\,\|\Psi_s(\bfx)-\Phi_s^\theta(\bfx)\|\diff s + \varepsilon t,
\]
so Grönwall's lemma yields
\[
\sup_{t\in[0,\tau]}\|\Psi_t(\bfx)-\Phi_t^\theta(\bfx)\|
\le \varepsilon T e^{LT} < 1,
\]
where the last inequality follows from \eqref{eq:barron_p_cond}.
Hence $\Phi_t^\theta(\bfx)$ stays within distance $1$ of $\Psi_t(\bfx)$, which
by \eqref{eq:barron_containment} lies in $[-(R_\star-1),R_\star-1]^d$. Thus
$\Phi_t^\theta(\bfx)\in(-R_\star,R_\star)^d$ on $[0,\tau]$, forcing $\tau=T$
by continuity. Taking the supremum over $\bfx\in\Omega$ gives \eqref{eq:barron_bound}.
\end{proof}

We are now in a position to prove the theorem.

\begin{proof}[Proof of \Cref{thm:cell_ctrl}]
Assume first that the targets $\bfr_k$ are pairwise distinct. Since $d\ge2$, we can choose smooth, pairwise disjoint $\gamma_k:[0,T]\to\R^d$ satisfying \eqref{eq:traj}--\eqref{eq:bd.traj}. We use the construction in the proof of \Cref{lem:steering-corridors} with these fixed
paths and tolerance \(\eta/4\). Let \(v\) be the resulting field and \((\Psi_t)_{t\in[0,T]}\) its flow.  Let $s_*$ be defined by \eqref{eq:steering-geom-data} and $(m,G)$ by \eqref{eq:steering-mGxi}, set \(\eta_0\coloneqq \min\{1,s_*/2,m/2\}\), and fix \(\eta\in(0,\eta_0]\). Therefore,
\begin{equation}\label{eq:psi}
\Psi_T(A_k)\subset B(\bfr_k,\eta/4) \qquad\text{for all }k\in[K].
\end{equation}
For $\mathfrak M>0$ given by \eqref{eq:defMp}, it follows that:
\begin{equation}\label{eq:psi_bdd_proof_cell}
[0,T]\times\bigcup_{t\in[0,T]}\Psi_t(I_R)\subset [-(\mathfrak M-1),\mathfrak M-1]^{d+1}.
\end{equation}
Moreover, if $\eta>0$ is small enough that $2\eta<s_*\wedge m$, then $\delta=\min\left\{\frac{\eta}{4},\frac{s_*}{8},\frac{m}{8}\right\}=\frac{\eta}{4}.$ \Cref{lem:steering-estimates} then yields:
\[
\sup_{t\in[0,T]}\Lip_\bfx(v(t,\cdot))
\le L\left(1-\log\eta\right),
\qquad
\|v\|_{\mathcal B_2^{d+1}}
\le B\left(1-\log\eta\right),
\]Increasing \(\mathfrak C\), if necessary, we also ensure that the lower bound
\(\eqref{eq:p_cell_ctrl_thm}\) implies the size condition
\(\eqref{eq:barron_p_cond}\) for every \(\eta\in(0,\eta_0]\), for constants $L,B\ge0$ that depend only on
$d,T,R,(\bfr_k)_k,(A_k)_k$. Applying \Cref{lem:barron_approximation} with $\Omega=I_R$ and $R_\star=\mathfrak M$, we obtain that for any sufficiently large $p\ge3$:
\begin{align*}
\sup_{\bfx\in I_R}\sup_{t\in[0,T]}\|\Psi_t(\bfx)-\Phi_t^\theta(\bfx)\|
&\le
\frac{C_d\mathfrak M^2 TB(1-\log\eta)}{\sqrt p}
\exp\left(LT(1-\log\eta)\right)\\
&\le
\frac{C_d\mathfrak M^2 TBe^{LT}(1-\log\eta)}{\sqrt p}\eta^{-LT}.
\end{align*}
Now, choose $\mathfrak{C}$ sufficiently large so that $\mathfrak{C}>2LT$ and $\sqrt{\mathfrak{C}}\ge 2C_d\mathfrak M^2 TBe^{LT}$. If $p$ satisfies the bound in \eqref{eq:p_cell_ctrl_thm}, we deduce:
\[
\sup_{\bfx\in I_R}\sup_{t\in[0,T]}\|\Psi_t(\bfx)-\Phi_t^\theta(\bfx)\|
\le
\frac{C_d\mathfrak M^2 TBe^{LT}}{\sqrt{\mathfrak{C}}}\,
\eta^{1+\mathfrak{C}/2-LT}
\le \frac{\eta}{2}.
\]
For any $\bfx\in A_k$, combining this bound with \eqref{eq:psi} allows us to verify (i) via the triangle inequality:
\[
\|\Phi_T^\theta(\bfx)-\bfr_k\|
\le
\|\Phi_T^\theta(\bfx)-\Psi_T(\bfx)\|
+
\|\Psi_T(\bfx)-\bfr_k\|
\le
\frac{\eta}{2}+\frac{\eta}{4}
<\eta.
\]
Furthermore, using \eqref{eq:psi_bdd_proof_cell}, we can easily verify (ii):
\[
\|\Phi_T^\theta\|_{L^\infty(I_R)}
\le
\|\Psi_T\|_{L^\infty(I_R)}
+
\|\Phi_T^\theta-\Psi_T\|_{L^\infty(I_R)}
\le
(\mathfrak M-1)+\frac{\eta}{2}
<
\mathfrak M,
\]
which holds because $\eta\le1$.

Finally, consider the case where the targets $\bfr_k$ are not necessarily pairwise distinct. We can slightly perturb them to choose auxiliary targets $\tilde{\bfr}_k$ that are pairwise distinct and satisfy:
\[
\|\tilde{\bfr}_k-\bfr_k\|<\eta/4
\qquad\text{for all }k.
\]
Applying the previous argument to these auxiliary targets $\tilde{\bfr}_1,\dots,\tilde{\bfr}_K$ with a tighter tolerance of $\eta/2$, we obtain, for sufficiently large $p$, a control $\theta$ such that $\Phi_T^\theta(A_k)\subset B(\tilde{\bfr}_k,\eta/2)$. By the triangle inequality, it immediately follows that $\Phi_T^\theta(A_k)\subset B(\bfr_k,\eta)$. Since the mutual separation of the auxiliary targets may be of order \(\eta\),
the path-separation parameter \(m\), and hence the constants produced by the previous argument, need not remain uniform as \(\eta\downarrow0\).
\end{proof}

\subsection{Proofs of \texorpdfstring{\Cref{s:nonparam}}{Section \ref*{s:nonparam}}}
\label{sec:proof_nonparam}

\begin{proof}[Proof of \Cref{prop:hist-rate}]
By the squared-sum inequality, the expected error can be decomposed into a bias and a variance component:
\[
\mathbb{E}_{\mathcal{D}_N}\left[\left\|y - y_{N,h}\right\|_{L^2(\mu)}^2\right]
\le
2 \left\|y - y_h\right\|_{L^2(\mu)}^2
+
2 \, \mathbb{E}_{\mathcal{D}_N}\left[\left\|y_h - y_{N,h}\right\|_{L^2(\mu)}^2\right],
\]
where $y_h$ and $y_{N,h}$ are the population and empirical estimators defined in \eqref{eq:pop-avg-def} and \eqref{eq:emp-avg-def}, respectively. We bound each term separately.

\noindent\textbf{Bias.} Fix a cell $Q_k$ with $p_k \coloneqq \mu(Q_k) > 0$. The population coefficient $\bfs_k$ is the conditional expectation of $y(\cdot)$ on $Q_k$. By Jensen's inequality, for any $\bfx \in Q_k$:
\[
\|y(\bfx) - \bfs_k\|
=
\Biggl\|\frac{1}{p_k}\int_{Q_k}\left(y(\bfx)-y(\bfx')\right)\diff\mu(\bfx')\Biggr\|
\le
\frac{1}{p_k}\int_{Q_k}\|y(\bfx)-y(\bfx')\|\diff\mu(\bfx').
\]
For any $\bfx, \bfx' \in Q_k$, the distance satisfies: $\|\bfx - \bfx'\| \le \diam(Q_k) \le \sqrt{d}\,h.$
Consequently, $\|y(\bfx) - y(\bfx')\| \le \omega_y(\sqrt{d}\,h)$, which implies $\|y(\bfx) - \bfs_k\| \le \omega_y(\sqrt{d}\,h)$. Since cells with $p_k = 0$ do not contribute to the $L^2(\mu)$ norm, integrating over the partition yields:
\begin{equation}\label{eq:bias_analysis}
\|y - y_h\|_{L^2(\mu)}^2
=
\sum_{k \in \mathcal{K}} \int_{Q_k}\|y(\bfx) - \bfs_k\|^2\diff\mu(\bfx)
\le
\omega_y(\sqrt{d}\,h)^2 \sum_{k \in \mathcal{K}} \mu(Q_k)
=
\omega_y(\sqrt{d}\,h)^2,
\end{equation}
where $\mathcal{K}$ is the index set defined in \eqref{eq:index.set}.

\noindent\textbf{Variance.} Since $y_h|_{Q_k} = \bfs_k$ and $y_{N,h}|_{Q_k} = \bfS_k$, the variance term becomes:
\begin{equation}\label{eq:variance_sum_revised}
\mathbb{E}_{\mathcal{D}_N}\left[\left\|y_h - y_{N,h}\right\|_{L^2(\mu)}^2\right]
=
\sum_{k \in \mathcal{K}} p_k\,\mathbb{E}_{\mathcal{D}_N}\left[\|\bfs_k - \bfS_k\|^2\right].
\end{equation}
Fix an index $k$ and let $N_k \coloneqq \sum_{i=1}^N \mathbf{1}_{Q_k}(\bfx_i)$ denote the number of samples falling into $Q_k$. Because the inputs are drawn i.i.d. from $\mu$, $N_k$ follows a binomial distribution, $N_k \sim \mathrm{Binom}(N, p_k)$. 

Conditionally on $N_k = n \ge 1$, the coefficient $\bfS_k$ is the empirical average of $n$ i.i.d. random variables $\bfY_1, \dots, \bfY_n$ distributed as $y(\bfX)$ for $\bfX \sim \mu(\cdot \mid Q_k)$. By definition, $\mathbb{E}[\bfY_1] = \bfs_k$. The variance of this empirical average is exactly:
\[
\mathbb{E}\left[\|\bfS_k - \bfs_k\|^2 \mid N_k = n\right]
=
\mathbb{E}\Biggl[\Biggl\|\frac{1}{n}\sum_{j=1}^n\left(\bfY_j-\bfs_k\right)\Biggr\|^2\Biggr]
=
\frac{1}{n}\,\mathbb{E}\left[\|\bfY_1-\bfs_k\|^2\right]
\le
\frac{\|y\|_{L^\infty(I_R)}^2}{n}.
\]
If $N_k = 0$, we defined $\bfS_k = 0$, which gives $\|\bfS_k - \bfs_k\|^2 = \|\bfs_k\|^2 \le \|y\|_{L^\infty(I_R)}^2$. Taking the unconditional expectation over the binomial variable $N_k$, we obtain:
\[
\mathbb{E}_{\mathcal{D}_N}\left[\|\bfS_k - \bfs_k\|^2\right]
\le
\|y\|_{L^\infty(I_R)}^2\left(\mathbb{E}\left[\frac{\mathbf{1}_{\{N_k>0\}}}{N_k}\right] + \mathbb{P}(N_k = 0)\right).
\]
We bound the two terms in the parenthesis. First, $\mathbb{P}(N_k = 0) = (1 - p_k)^N \le e^{-N p_k}$. Second, utilizing the elementary inequality $\frac{1}{n} \le \frac{2}{n+1}$ for $n \ge 1$, we evaluate the expectation via an integral identity for the binomial generating function:
\[
\mathbb{E}\left[\frac{\mathbf{1}_{\{N_k>0\}}}{N_k}\right]
\le
2\,\mathbb{E}\left[\frac{1}{N_k+1}\right]
=
2\int_0^1 \mathbb{E}[t^{N_k}]\diff t
=
2\int_0^1 (1 - p_k + p_k t)^N\diff t
\le
\frac{2}{p_k(N+1)}.
\]
Substituting these estimates into \eqref{eq:variance_sum_revised} yields:
\[
\mathbb{E}_{\mathcal{D}_N}\left[\left\|y_h - y_{N,h}\right\|_{L^2(\mu)}^2\right]
\le
\|y\|_{L^\infty(I_R)}^2 \sum_{k \in \mathcal{K}} \left(\frac{2}{N+1} + p_k e^{-N p_k}\right).
\]
Recall from \eqref{eq:number.cells} that the total number of cells is $K_h \lesssim h^{-d}$. Moreover, using $\sup_{p \ge 0} p e^{-N p} = (eN)^{-1}$, we can bound the sum uniformly:
\[
\mathbb{E}_{\mathcal{D}_N}\left[\left\|y_h - y_{N,h}\right\|_{L^2(\mu)}^2\right]
\le
\|y\|_{L^\infty(I_R)}^2\left(\frac{2 K_h}{N+1} + \frac{K_h}{eN}\right)
\lesssim
\frac{\|y\|_{L^\infty(I_R)}^2 K_h}{N}
\lesssim
\frac{\|y\|_{L^\infty(I_R)}^2}{N h^d}.
\]
Combining the bias and variance upper bounds directly gives \eqref{eq:hist-rate}.
\end{proof}

\begin{proof}[Proof of \Cref{cor:holder}]
Hölder regularity gives \(\omega_y(t)\leq L_y t^\alpha\) where $L_y > 0$ is the Hölder constant of $y(\cdot)$ on $I_R$. Hence \eqref{eq:bias_analysis} gives
\[
\|y-y_h\|_{L^2(\mu)}^2\le \omega_y(\sqrt d\,h)^2=L_y^2 (\sqrt{d}\,h)^{2\alpha} = L_y^2 d^\alpha h^{2\alpha}\lesssim h^{2\alpha}.
\]
Substituting this into \eqref{eq:hist-rate},
\[
\mathbb E_{\mathcal D_N}\|y-y_{N,h}\|_{L^2(\mu)}^2
\lesssim h^{2\alpha}+(Nh^d)^{-1}.
\]
The choice \(h_N=(2R)\wedge N^{-1/(2\alpha+d)}\) balances both terms and gives
\[
\mathbb E_{\mathcal D_N}\|y-y_{N,h_N}\|_{L^2(\mu)}^2
\lesssim N^{-2\alpha/(2\alpha+d)}. \qedhere
\]
\end{proof}

\begin{proof}[Proof of \Cref{prop:voronoi-rate}]
If $y(\cdot)$ is $\alpha$-Hölder with constant $L_y>0$, then
\begin{equation}\label{eq:voronoi_holder}
\|y(\bfx)-y_N^V(\bfx)\|
=
\|y(\bfx)-y(x_{\mathrm{NN}}(\bfx))\|
\le
L_y\|\bfx-x_{\mathrm{NN}}(\bfx)\|^\alpha
\le
L_y R_N^\alpha,
\end{equation}
where $R_N$ is the covering radius defined in \eqref{eq:cov.radius}. Therefore,
\[
\|y-y_N^V\|_{L^2(\mu)}^2
\le
\|y-y_N^V\|_{L^\infty(I_R)}^2
\le
L_y^2 R_N^{2\alpha}.
\]
Since $\rho$ is bounded below on the cube $I_R$, there exists $c>0$ such that $\mu(B(\bfx,r)\cap I_R)\ge c r^d$ for all $\bfx\in I_R$ and all sufficiently small $r>0$. Combining this estimate with \eqref{eq:cov_radius_rate} directly proves \eqref{eq:voronoi-rate}.
\end{proof}

\subsection{Proofs of \texorpdfstring{\Cref{s:generalization}}{Section \ref*{s:generalization}}}
\label{sec:proof_gen}

\begin{proof}[Proof of \Cref{lem:grid-layer}]
For a single cell $Q_k$ of side length $h$, the volume of its trimmed boundary rim is $ h^d - (h-2\delta)^d \lesssim h^{d-1}\delta$. Summing over the $K_h \asymp h^{-d}$ cells comprising the partition, the total volume is bounded by $|\Omega_\delta| \lesssim (h^{-d})(h^{d-1}\delta) = \delta/h$. Because the probability density of $\mu$ is bounded from above by a constant, the measure directly satisfies $\mu(\Omega_\delta) \lesssim |\Omega_\delta| \lesssim \delta/h$.
\end{proof}

To prove \Cref{lem:voronoi-layer}, we first isolate a preliminary estimate controlling the probability that the gap between the first two nearest-neighbor distances is small.

\begin{lemma}\label{lem:gap-small}
Assume $N\ge 2$, fix $\bfx\in I_R$, and let $D_1(\bfx)\le D_2(\bfx)$ be the distances from $\bfx$ to its nearest and second-nearest points within $\{\bfx_1,\dots,\bfx_N\}$. For $r>0$, denote $F_{\bfx}(r)\coloneqq \mu(B(\bfx,r))$. Then, for every $\delta>0$,
\[
\mathbb P_{\mathcal D_N}\left(D_2(\bfx)-D_1(\bfx)\le 2\delta\right)
\le
N(N-1)\int_0^\infty
\left[F_{\bfx}(r+2\delta)-F_{\bfx}(r)\right]\left(1-F_{\bfx}(r)\right)^{N-2}\diff F_{\bfx}(r).
\]
\end{lemma}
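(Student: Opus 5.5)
The plan is to reduce the event to a statement about the radial variables $\|\bfx_i-\bfx\|$ and then apply a union bound over ordered pairs of indices. Write $\rho_i \coloneqq \|\bfx_i-\bfx\|$. These are i.i.d.\ real random variables with distribution function $F_{\bfx}(r)=\mu(B(\bfx,r))$. Since $\mu\in\Pac$, spheres are $\mu$-null, so $F_{\bfx}$ is continuous and ties among the $\rho_i$ occur with probability zero.

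On the full-measure event of no ties, $D_2-D_1\le 2\delta$ holds exactly when there are distinct indices $i\neq j$ such that:
\begin{itemize}
\item $\rho_i$ is the minimum of $\{\rho_1,\dots,\rho_N\}$;
\item every other $\rho_\ell$ with $\ell\notin\{i,j\}$ exceeds $\rho_i$;
\item $\rho_j\in(\rho_i,\rho_i+2\delta]$.
\end{itemize}
Here $j$ is taken to be the index realizing the second-nearest distance. A union bound over the $N(N-1)$ ordered pairs $(i,j)$ then gives
\[
\mathbb P(D_2-D_1\le 2\delta)\le N(N-1)\,\mathbb P\bigl(\rho_1<\rho_\ell\ \forall \ell\ge 3,\ \rho_2\in(\rho_1,\rho_1+2\delta]\bigr).
\]
Conditioning on $\rho_1=r$ and using independence, this probability becomes
\[
\int \mathbb P(\rho_2\in(r,r+2\delta])\,\mathbb P(\rho_3>r)^{N-2}\,\diff F_{\bfx}(r).
\]
Here $\mathbb P(\rho_2\in(r,r+2\delta])=F_{\bfx}(r+2\delta)-F_{\bfx}(r)$ and $\mathbb P(\rho_3>r)=1-F_{\bfx}(r)$. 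Integrating against the law $\diff F_{\bfx}$ of $\rho_1$ gives exactly the claimed integral; the range of integration can be taken as $[0,\infty)$ since $\rho_1\ge0$.

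The only care point is this identification. I would state it as an inclusion of events up to a null set, which is all that the inequality requires. With that in place, the rest is Fubini (conditioning on $\rho_1$) together with the i.i.d.\ structure. I expect no real obstacle beyond the null-tie bookkeeping, which is handled by the continuity of $F_{\bfx}$.
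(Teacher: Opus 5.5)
Your proposal is correct and follows essentially the same route as the paper. The paper reduces to the i.i.d. distances $Z_i=\|\bfx-\bfx_i\|$ and gets the factor $N(N-1)$ in two stages: a symmetry factor $N$ for the index attaining the minimum, then a union bound over the $N-1$ candidates $j$ after conditioning on $Z_1=r$. You take the union bound over ordered pairs in one step, and you explicitly use the continuity of $F_{\bfx}$ (spheres are $\mu$-null) to dispose of ties and open/closed endpoint conventions, a point the paper leaves implicit.
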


\begin{proof}
For each $i\in[N]$, set $Z_i\coloneqq \|\bfx-\bfx_i\|$. Writing the order statistics as $Z_{[1]}\le Z_{[2]}\le \cdots \le Z_{[N]},$
we have $D_1(\bfx)=Z_{[1]}$ and $D_2(\bfx)=Z_{[2]}$. By symmetry,
\[
\mathbb P\left(D_2(\bfx)-D_1(\bfx)\le 2\delta\right)
\le
N\,\mathbb P\left(Z_1=Z_{[1]},\ Z_{[2]}\le Z_1+2\delta\right).
\]
Conditioning with respect to $Z_1$, we see that on the event $\{Z_1=r,\ Z_1=Z_{[1]},\ Z_{[2]}\le r+2\delta\}$, at least one of the remaining $N-1$ distances must belong to $[r,r+2\delta]$, while all the others must be at least $r$. For a fixed index $j\in\{2,\dots,N\}$,
\[
\mathbb P\left(r\le Z_j\le r+2\delta\right)=F_{\bfx}(r+2\delta)-F_{\bfx}(r),
\]
and for each $k\notin\{1,j\}$, $\mathbb P(Z_k\ge r)=1-F_{\bfx}(r).$
By independence, these probabilities multiply, and a union bound over the $N-1$ possible choices of $j$ yields
\[
\mathbb P\left(Z_1=Z_{[1]},\ Z_{[2]}\le r+2\delta \mid Z_1=r\right)
\le
(N-1)\left[F_{\bfx}(r+2\delta)-F_{\bfx}(r)\right]\left(1-F_{\bfx}(r)\right)^{N-2}.
\]
Integrating with respect to the law of $Z_1$, whose distribution function is $F_{\bfx}$, proves the claim.
\end{proof}

\begin{proof}[Proof of \Cref{lem:voronoi-layer}]
We first treat the case $N=1$. Then $V_1=I_R$, so $\Omega_\delta=\{\bfx\in I_R:\dist(\bfx,\partial I_R)<\delta\}$, and since $\mu$ has density bounded above, $\mathbb E_{\mathcal D_N}[\mu(\Omega_\delta)] = \mu(\Omega_\delta) \lesssim \delta \le \delta N^{1/d}$. 

We may therefore assume $N\ge 2$. Let $\bfx\sim\mu$ be an independent test point; by Fubini's theorem,
\[
\mathbb E_{\mathcal D_N}\left[\mu(\Omega_\delta)\right]
=
\mathbb P\left(\bfx\in\Omega_\delta(\mathcal D_N)\right),
\]
where the probability is taken jointly over $\mathcal D_N$ and $\bfx$. The bound is trivial whenever $\delta N^{1/d}\ge1$ since probabilities are at most $1$, so we restrict to $\delta N^{1/d}\le 1.$

Let $D_1(\bfx)$ and $D_2(\bfx)$ denote the distances from $\bfx$ to its nearest and second-nearest sample points among $\{\bfx_1,\dots,\bfx_N\}$, respectively. We claim that
\begin{equation}\label{eq:omega-gap-inclusion}
\{\bfx\in\Omega_\delta,\ \dist(\bfx,\partial I_R)>\delta\}
\subset
\{D_2(\bfx)-D_1(\bfx)\le 2\delta\}.
\end{equation}
Suppose that $\bfx\in\Omega_\delta$ and $\dist(\bfx,\partial I_R)>\delta$, and let $V_i$ be the Voronoi cell containing $\bfx$. Since $\bfx\notin V_i^\delta$, there exists $\bfz\in\partial V_i$ with $\|\bfx-\bfz\|\le\delta$. The constraint $\dist(\bfx,\partial I_R)>\delta$ excludes $\bfz\in\partial I_R$, so $\bfz$ lies on an internal Voronoi boundary and is equidistant from at least two sample points $\bfx_i,\bfx_j$. The triangle inequality gives \[\|\bfx-\bfx_j\|\le \|\bfx-\bfz\|+\|\bfz-\bfx_i\|\le \|\bfx-\bfx_i\|+2\|\bfx-\bfz\|.\] Taking $\bfx_i$ as a nearest neighbor and $\bfx_j$ as a second-nearest neighbor of $\bfx$, we obtain $D_2(\bfx)-D_1(\bfx)\le 2\|\bfx-\bfz\|\le 2\delta$, which proves \eqref{eq:omega-gap-inclusion}. Consequently,
\begin{equation}\label{eq:split-layer}
\mathbb P\left(\bfx\in\Omega_\delta(\mathcal D_N)\right)
\le
\mathbb P\left(\dist(\bfx,\partial I_R)\le\delta\right)
+
\mathbb P\left(D_2(\bfx)-D_1(\bfx)\le 2\delta\right).
\end{equation}
Since $\mu$ has density bounded above and the Lebesgue measure of the $\delta$-strip near $\partial I_R$ is $O(\delta)$,
\begin{equation}\label{eq:boundary-strip}
\mathbb P\left(\dist(\bfx,\partial I_R)\le\delta\right)\lesssim \delta.
\end{equation}
For the second term, fix $\bfx\in I_R$ and define $F_{\bfx}(r)\coloneqq \mu(B(\bfx,r))$. The density bound on $\mu$ controls the $\mu$-mass of a spherical annulus by its Euclidean volume:
\begin{equation}\label{eq:sphericalannulus}
F_{\bfx}(r+2\delta)-F_{\bfx}(r)
\lesssim
\delta\,(r+2\delta)^{d-1}
\lesssim
\delta\, r^{d-1}+\delta^d.
\end{equation}
Substituting \eqref{eq:sphericalannulus} into \Cref{lem:gap-small} and using the identity $(N-1)\int_0^\infty(1-F_\bfx)^{N-2}\diff F_\bfx=1$ (immediate from the change of variable $s=F_\bfx(r)$),
\begin{equation}\label{eq:gap-prob-reduced}
\mathbb P\left(D_2(\bfx)-D_1(\bfx)\le2\delta \mid \bfx\right)
\lesssim
N\delta\, I_{\bfx}+N\delta^d,
\qquad
I_{\bfx}
\coloneqq
(N-1)\int_0^\infty r^{d-1}\left(1-F_{\bfx}(r)\right)^{N-2}\diff F_{\bfx}(r).
\end{equation}
It remains to bound $I_{\bfx}$ uniformly in $\bfx\in I_R$. Stieltjes integration by parts, whose boundary terms vanish (at $r=0$ since $d\ge2$, at $r=\infty$ since $\supp\mu\subset I_R$), gives
\[
I_{\bfx}
=
(d-1)\int_0^\infty r^{d-2}(1-F_{\bfx}(r))^{N-1}\diff r.
\]
Let $\rho_{\min}>0$ be a uniform lower bound for the density of $\mu$. Since $I_R$ is a cube, there exist constants $c>0$ and $r_0>0$ such that $|B(\bfx,r)\cap I_R|\ge c\,r^d$ for all $\bfx\in I_R$ and $0<r\le r_0$, hence $F_{\bfx}(r)\ge c\,r^d$ on the same range. Using $1-z\le e^{-z}$, we get $(1-F_{\bfx}(r))^{N-1}\le e^{-c(N-1)r^d}$ for $r\le r_0$, while for $r\ge r_0$, $1-F_\bfx(r)\le 1-F_\bfx(r_0)$ is uniformly bounded away from $1$. Setting $D_R\coloneqq\diam(I_R)$ and splitting at $r_0$,
\[
I_{\bfx}
\lesssim
\int_0^{r_0} r^{d-2}e^{-c(N-1)r^d}\diff r
+
\int_{r_0}^{D_R} r^{d-2}(1-F_{\bfx}(r_0))^{N-1}\diff r.
\]
The change of variables $u=(N-1)r^d$ in the first integral yields a bound $\lesssim N^{-(d-1)/d}$, and the second is exponentially small in $N$ and absorbed into the same bound. Hence
\begin{equation}\label{eq:Ix-bound}
I_{\bfx}\lesssim N^{-(d-1)/d}
\qquad\text{uniformly in }\bfx\in I_R.
\end{equation}
Substituting \eqref{eq:Ix-bound} into \eqref{eq:gap-prob-reduced} yields
\[
\mathbb P\left(D_2(\bfx)-D_1(\bfx)\le2\delta \mid \bfx\right)
\lesssim
\delta N^{1/d}+N\delta^d
\lesssim
\delta N^{1/d},
\]
where the last step uses $N\delta^d=(\delta N^{1/d})^d\le \delta N^{1/d}$ and $d\ge 2$. Taking expectations and combining with \eqref{eq:split-layer} and \eqref{eq:boundary-strip}, we conclude $\mathbb E_{\mathcal D_N}\left[\mu(\Omega_\delta)\right]
\lesssim
\delta+\delta N^{1/d}
\lesssim
\delta N^{1/d},$ since $N^{1/d}\ge1$.
\end{proof}

\begin{proof}[Proof of \Cref{cor:rates}]
For the histogram partition, apply \Cref{thm:template_bound} with $y_N=y_{N,h}$. Then \Cref{prop:hist-rate} and \Cref{lem:grid-layer} give
\[
\mathbb E_{\mathcal D_N}\left[\|y-y_{N,h}\|_{L^2(\mu)}^2\right]
\lesssim
h^{2\alpha}+\frac{1}{Nh^d},\qquad \mathbb E_{\mathcal D_N}\left[\mu(\Omega_\delta)\right]\lesssim \frac{\delta}{h}.
\]
Substituting these bounds into \eqref{eq:template_bound} gives the first estimate.

\noindent For the Voronoi partition, apply \Cref{thm:template_bound} with $y_N=y_N^V$. The expectation bound in \Cref{prop:voronoi-rate} and the boundary-layer estimate in
\Cref{lem:voronoi-layer} give
\[
\mathbb E_{\mathcal D_N}
\left[
\|y-y_N^V\|_{L^2(\mu)}^2
\right]
\lesssim
\left(\frac{\log N}{N}\right)^{2\alpha/d},
\qquad
\mathbb E_{\mathcal D_N}[\mu(\Omega_\delta)]
\lesssim
\delta N^{1/d}.
\]
Substituting these bounds into \eqref{eq:template_bound} gives the Voronoi estimate.
\end{proof}

\subsubsection*{Proofs of \Cref{prop:sanode-rate,prop:sanode-voronoi}}

\begin{proof}[Proof of \Cref{prop:sanode-rate}]
Define $\eta_N^2\coloneqq N^{-\frac{2\alpha}{2\alpha+d}},$ and $h_N\coloneqq N^{-\frac{1}{2\alpha+d}},
\delta_N\coloneqq h_N^{1+2\alpha},$
and apply the histogram bound in \Cref{cor:rates} with $h=h_N$ and $\delta=\delta_N$.
By \Cref{thm:cell_ctrl}, there exists $p_N^*\in\N$
given by \eqref{eq:p_cell_ctrl_thm}, such that for every $p_N \ge p_N^*$
one can choose a control for \eqref{eq:saNODE} of width $p_N$ with
$\eta^2 \asymp N^{-\frac{2\alpha}{2\alpha+d}}$,
whose flow is uniformly bounded on $I_R$.
Moreover, \Cref{prop:hist-rate} and \Cref{cor:holder} give
\[
\mathbb{E}_{\mathcal{D}_N}\left[\|y-y_{N,h_N}\|_{L^2(\mu)}^2\right]
\lesssim
h_N^{2\alpha}+\frac{1}{Nh_N^d}
\asymp
N^{-\frac{2\alpha}{2\alpha+d}},
\]
and \Cref{lem:grid-layer} yields $\mathbb{E}_{\mathcal{D}_N}\left[\mu(\Omega_{\delta_N})\right]
\lesssim
\delta_N h_N^{-1}
=
h_N^{2\alpha}
=
N^{-\frac{2\alpha}{2\alpha+d}}.$ If $N > 2^{1+d/(2\alpha)}$ then $\delta_N < h_N/2$ and substituting these bounds into \eqref{eq:non-param} yields the claim.
\end{proof}

\begin{proof}[Proof of \Cref{prop:sanode-voronoi}]
 Let $p_N$ satisfy condition \eqref{eq:p_cell_ctrl_thm} for the Voronoi
partition generated by $\{\bfx_i\}_{i=1}^N$, with margin $\delta_N$ and tolerance $\eta_N$ defined by
\[
\eta_N^2
\coloneqq
\left(\frac{\log N}{N}\right)^{\frac{2\alpha}{d}},
\qquad
\delta_N
\coloneqq
(\log N)^{\frac{2\alpha}{d}}N^{-\frac{2\alpha+1}{d}}.
\]
Substituting the bounds \eqref{eq:voronoi-rate} from \Cref{prop:voronoi-rate} and \eqref{eq:layer-explicit} from \Cref{lem:voronoi-layer} into \eqref{eq:non-param} yields the conclusion.
\end{proof}


\bibliographystyle{abbrv} 
\bibliography{biblio.bib}

\end{document}